\newcommand{\RR}{\mathbb{R}}
\newcommand{\kk}{\mathbb{k}}
\newcommand{\bn}{{\normalfont\mathbf{n}}}
\newcommand{\bm}{{\normalfont\mathbf{m}}}
\newcommand{\ee}{{\normalfont\mathbf{e}}}
\newcommand{\bmm}{\mathbb{M}}
\newcommand{\fP}{\mathfrak{P}}
\newcommand{\GG}{\mathcal{G}}
\newcommand{\NN}{\normalfont\mathbb{N}}
\newcommand{\ZZ}{{\normalfont\mathbb{Z}}}
\newcommand{\PP}{{\normalfont\mathbb{P}}}
\newcommand{\xx}{\normalfont\mathbf{x}}
\newcommand{\yy}{\normalfont\mathbf{y}}
\newcommand{\dd}{{\normalfont\mathbf{d}}}
\newcommand{\mm}{{\normalfont\mathfrak{m}}}
\newcommand{\QQ}{\mathbb{Q}}
\newcommand{\pp}{\mathfrak{p}}
\newcommand{\bb}{\mathfrak{b}}
\newcommand{\nn}{{\normalfont\mathfrak{N}}}
\newcommand{\rank}{\normalfont\text{rank}}
\newcommand{\sat}{{\normalfont\text{sat}}}
\newcommand{\Ker}{\normalfont\text{Ker}}
\newcommand{\HT}{\normalfont\text{ht}}
\newcommand{\Supp}{\normalfont\text{Supp}}
\newcommand{\Rees}{\mathcal{R}}
\newcommand{\BB}{\mathcal{B}}
\newcommand{\DD}{\mathbb{D}}
\newcommand{\OO}{\mathcal{O}}
\newcommand{\FF}{\mathfrak{F}}
\newcommand{\Vol}{{\normalfont\text{Vol}}}
\newcommand{\MV}{{\normalfont\text{MV}}}
\newcommand{\HL}{\normalfont\text{H}_{\mm}}
\newcommand{\HH}{\normalfont\text{H}}
\newcommand{\gr}{\normalfont\text{gr}}
\newcommand{\AAA}{\mathfrak{A}}
\newcommand{\Proj}{\normalfont\text{Proj}}
\newcommand{\Spec}{{\normalfont\text{Spec}}}
\newcommand{\multProj}{\normalfont\text{MultiProj}}
\newcommand{\fib}{\mathfrak{F}}
\newcommand{\sfib}{\widetilde{\mathfrak{F}}}
\newcommand{\ttt}{\mathbf{t}}
\newcommand{\bK}{\mathbf{K}}
\newtheorem{theorem}{Theorem}[section]
\newaliascnt{headcor}{headthm}
\newaliascnt{headconj}{headthm}
\newaliascnt{corollary}{theorem}
\newtheorem{corollary}[corollary]{Corollary}
\newaliascnt{claim}{theorem}
\newaliascnt{lemma}{theorem}
\newtheorem{lemma}[lemma]{Lemma}
\newaliascnt{conjecture}{theorem}
\newaliascnt{proposition}{theorem}
\newtheorem{proposition}[proposition]{Proposition}
\theoremstyle{definition}
\newaliascnt{definition}{theorem}
\newtheorem{definition}[definition]{Definition}
\newaliascnt{notation}{theorem}
\newtheorem{notation}[notation]{Notation}
\newaliascnt{example}{theorem}
\newtheorem{example}[example]{Example}
\newaliascnt{examples}{theorem}
\newaliascnt{remark}{theorem}
\newtheorem{remark}[remark]{Remark}
\newaliascnt{question}{theorem}
\newaliascnt{questions}{theorem}
\newaliascnt{problem}{theorem}
\newaliascnt{construction}{theorem}
\newaliascnt{setup}{theorem}
\newtheorem{setup}[setup]{Setup}
\newaliascnt{algorithm}{theorem}
\newaliascnt{observation}{theorem}
\newaliascnt{defprop}{theorem}
\def\equationautorefname~#1\null{(#1)\null}
\def\sectionautorefname~#1\null{Section #1\null}
\def\subsectionautorefname~#1\null{\S #1\null}
\begin{document}

	\title{A study of nonlinear multiview varieties}
	
	\author{Yairon Cid-Ruiz}
	\address[Cid-Ruiz]{Department of Mathematics, KU Leuven, Celestijnenlaan 200B, 3001 Leuven, Belgium}
	\email{yairon.cidruiz@kuleuven.be}

	\author{Oliver Clarke}
	\address[Clarke]{Department of Mathematics: Algebra and Geometry, Ghent University, S25, 9000 Gent, Belgium}
	\email{oliver.clarke@bristol.ac.uk}
	
	\author{Fatemeh Mohammadi}
	\address[Mohammadi]{Department of Mathematics, KU Leuven, Celestijnenlaan 200B, Leuven, Belgium  and Department of Computer Science, KU Leuven, Celestijnenlaan 200A, 3001 Leuven, Belgium}
	\email{fatemeh.mohammadi@kuleuven.be}

	\begin{abstract}
		We study the nonlinear generalization of the classical multiview variety, which is a fundamental concept in computer vision. 
		In this paper, we take the first comprehensive step to develop the nonlinear analogue of  multiview varieties.
		To this end, we introduce a multigraded version of the saturated special fiber ring. 
		By applying this tool, we are able to compute the multidegrees of several families of nonlinear multiview varieties. 
	\end{abstract}
	
	\subjclass[2010]{Primary: 14E05, Secondary: 13D02, 13D45, 13P99.}
	
	\keywords{nonlinear multiview varieties, rational maps, multidegrees, mixed multiplicities, saturated special fiber ring, syzygies, blow-up algebras.}
	
	\maketitle
	{
		\hypersetup{linkcolor=black}
		\setcounter{tocdepth}{1}
		\tableofcontents
	}
	
	
	\section{Introduction}

	
	\noindent	\textbf{Motivation.}
	Let $\kk$ be a field.
	The classical \emph{multiview variety} is an important tool in computer vision, see \cite{aholt2013hilbert}.
	It is defined as the closure of the image $Y$ of a linear rational map $\GG : \PP_\kk^3 \dashrightarrow \PP_\kk^2 \times_\kk \cdots \times_\kk \PP_\kk^2$. 
	Conceptually, the image of $\GG$ is a sequence of  cameras that take $2$-dimensional images of $3$-dimensional space. 
	Naturally, properties about $Y$ are crucial to solving the problem of reconstructing a $3$-dimensional object from a collection of images.
	
	Since $\GG$ is linear, the defining ideal $J$ of $Y$ has a structure that is amenable to study via a number of different methods. 
	Firstly, $J$ is an example of a Cartwright-Sturmfels ideal, see \cite{aholt2013hilbert, conca2018cartwright, conca2021radical}. 
	And so, $J$ satisfies a number of pleasant properties, see \cite{conca2021radical}.
	Secondly, it is possible to give an explicit description of the generators of $J$, see \cite{agarwal2019ideals}. In particular, if the focal points of the cameras are distinct then $J$ is generated by certain determinants of degree $2$ and $3$, see \cite[Theorem~3.7]{agarwal2019ideals}. 
	Thirdly, one can study the structure of the coordinate ring of $Y$ in terms of products of linear ideals, see \cite{conca2019resolution}. 
	In particular, it is possible to give an explicit formula for the Betti numbers in terms of the polymatroidal data defined by the positions of the cameras. These results demonstrate connections between properties of $Y$ and properties of the base locus of $\GG$. 
	The goal and purpose of this paper is to extend such connections for when $\GG$ is nonlinear.
	Therefore, our approach embodies the general and fruitful philosophy of studying nonlinear algebra \cite{NONLINEAR_BOOK, breiding2021nonlinear}.
	
	\smallskip

	\noindent	\textbf{Previous Work.} Let us consider a rational map $\GG : \PP_\kk^3 \dashrightarrow \PP_\kk^2 \times_\kk \cdots \times_\kk \PP_\kk^2$, as above, which is not necessarily linear. Note that, none of the methods above can be extended to study this case. 
	In particular, it is difficult to determine the generators of $J$ and, in general, this ideal is not necessarily a Cartwright-Sturmfels ideal. 
	So, it is natural to ask how to compute its Hilbert function. 
	More specifically, it is important to study the \emph{multidegrees} of $Y$ as a subvariety of the target $\PP_\kk^2 \times_\kk \cdots \times_\kk \PP_\kk^2$ of $\GG$.
	Our approach to determine the multidegrees of $Y$ is based on the study of blow-up algebras and  syzygies of the ideals generated by the defining polynomials of the rational map $\GG$.
	
	A syzygy-based approach to study rational maps seems to have been initiated in \cite{HULEK_KATZ_SCHREYER_SYZ}, and it has now become an active and fruitful research area, see~e.g.~\cite{AB_INITIO, Simis_cremona,KPU_blowup_fibers,EISENBUD_ULRICH_ROW_IDEALS,Hassanzadeh_Simis_Cremona_Sat,SIMIS_RUSSO_BIRAT,EFFECTIVE_BIGRAD,SIMIS_PAN_JONQUIERES,HASSANZADEH_SIMIS_DEGREES, MULTPROJ, MULT_SAT_PERF_HT_2, SAT_FIB_GOR_3, MIXED_MULT, BOTBOL_ALICIA_RAT_SURF, BOTBOL_ALICIA_MAT, KPU_BIGRAD_STRUCT, KPU_GOR3, KPU_NORMAL_SCROLL, CARLOS_MONO, CARLOS_MONOID, CARLOS_MU2}.
	In the area of Computer Aided Geometric Design (CAGD), a somewhat similar story has emerged where the concept of syzygies is substituted by an equivalent one in $\mu$-bases, see \cite{COX_EQ_PARAM, cox1998moving_line}.
	In all these previous works, the main goal is to study a (singly projective) rational map of the form $\GG : \PP_\kk^r \dashrightarrow \PP_\kk^s$.
	To illustrate the general idea, we recall an instance where the degree of the syzygies of the base ideal completely determines the degree of the image.
	In \cite{COX_EQ_PARAM}, Cox considers a parametric surface $Y$ given by the image of $\GG : \PP_\kk^2 \dashrightarrow \PP_\kk^3$. 
	In this case, under some technical assumptions, the degree of $Y$ can be computed by an elementary symmetric polynomial evaluated at the degrees of the syzygies of the base ideal, see \cite[Proposition~5.3]{COX_EQ_PARAM}. 
	In \cite[Theorem~A]{MULT_SAT_PERF_HT_2}, this result is widely generalized using the saturated special fiber ring to the case of rational maps $\GG : \PP_\kk^r \dashrightarrow \PP_\kk^s$ where the base locus is given by a perfect ideal of height two. The \emph{saturated special fiber ring} has been recently used to successfully study rational maps in many different contexts, see \cite{MULT_SAT_PERF_HT_2, SAT_FIB_GOR_3, MIXED_MULT, DEGREE_SPECIALIZATION}. 
	It was introduced by Bus\'e -- Cid-Ruiz -- D'Andrea \cite{MULTPROJ} for studying the degree and birationality of rational maps. In this paper we define the multigraded version of the saturated special fiber ring, see \autoref{def_sat_special_fiber_ring}.
	
	\smallskip

	\noindent	\textbf{Main Results.} 
	We consider rational maps $\GG : \PP_\kk^r \dashrightarrow \PP_\kk^{m_1} \times_\kk \cdots \times_\kk \PP_\kk^{m_p}$ whose image can be thought of as the nonlinear generalization of a multiview variety. 
	Note that, the closure of the image $Y$ of $\GG$ naturally admits a $\ZZ^p$-grading, and so we will be concerned with the multidegrees of $Y$ instead of the degree.	
	Our first main result is \autoref{thm:main_result_sat_special_fiber_ring} which relates the mixed multiplicities, see \autoref{def_multdeg}, of the saturated special fiber ring with the multidegrees of $Y$. 
	In particular, if the special fiber ring is integrally closed, we obtain a criterion for birationality of $\GG$ in terms of the saturated special fiber ring.
	
	Our second main result is \autoref{thm:zero_base_locus_degree_formula} which gives an upper bound for the multidegrees of $Y$, and, if the base locus of $\GG$ is zero-dimensional, then it yields an exact formula for the multidegrees of $Y$ in terms of the mixed multiplicities of the base points. 
	This generalizes the well-known degree formula for rational maps with finite base locus to the multiprojective setting, see \cite[Theorem 2.5]{Laurent_Jouanolou_Closed_Image}, \cite[Theorem 6.6]{Sim_Ulr_Vasc_mult} and \cite[Theorem 3.3]{MULTPROJ}.
	Note that, if $\GG$ is a linear rational map, then $Y$ is multiplicity-free, see \autoref{thm:multiplicity_free_linear_case} which recovers \cite[Theorem~1.1]{li2013images} and  \cite[Theorem~3.9]{conca2019resolution}.
	
	Our third main result is \autoref{thm:multidegree_formula_perfect_ht2_gorenstein_ht3} which gives an explicit formula for the multidegrees of $Y$ in the case that the base ideals $I_1, \dots, I_{p-1}$ are zero-dimensional and $I_p$ is either a height two perfect ideal or a height three Gorenstein ideal. 
	In particular, this result generalizes \cite[Theorem~A]{MULT_SAT_PERF_HT_2} and \cite[Theorem~A]{SAT_FIB_GOR_3} to the multigraded setting.
	
	Lastly, for the case of monomial rational maps, \autoref{thm:monomial_degree_mixedVol} provides a closed formula for the multidegrees of $Y$ in terms of mixed volumes of some naturally constructed lattice polytopes.
	
	\smallskip

	\noindent	\textbf{Outline.} In \autoref{sec:notation_and_prelim}, we fix our notation and recall the definitions of multidegrees and mixed multiplicities, see \autoref{def_multdeg}. 
	In \autoref{sec:saturated_special_fiber_ring}, we begin by fixing \autoref{setup_rat_maps} which details the generality for which our main results hold and, following this, we recall some fundamental properties of the multigraded Rees algebra. In \autoref{subsec:saturated_special_fiber_ring}, we introduce our main tool for studying the multidegrees of the image of rational maps, namely the saturated special fiber ring, see \autoref{def_sat_special_fiber_ring}. 
	Using this tool, we state and prove our first main result: \autoref{thm:main_result_sat_special_fiber_ring}. In \autoref{subsec:degree_formula}, we define the mixed multiplicities of the base locus of $\GG$ and prove our second main result: \autoref{thm:zero_base_locus_degree_formula}, which allows us to compute the multidegrees of $Y$ in terms of the mixed multiplicities of the base locus of $\GG$.
	
	We proceed to apply \autoref{thm:zero_base_locus_degree_formula} to important families of examples. 
	In \autoref{sec:linear_rational_maps}, we show that our results recover previous results for linear rational maps, i.e.~for classical multiview varieties. 
	In \autoref{sec:perfect_ht2_gorenstein_ht3}, we prove our third main result, \autoref{thm:multidegree_formula_perfect_ht2_gorenstein_ht3}, which computes the multidegrees of $Y$ for a large family of rational maps extending previous work in \cite{MULT_SAT_PERF_HT_2, SAT_FIB_GOR_3} to the multigraded setting. 
	In \autoref{sec:monomial_rational_maps}, we consider monomial rational maps $\GG$, and we show that in this case, the multidegrees of the image can be computed in terms of the mixed volumes of certain lattice polytopes.
	
	In \autoref{sec:applications}, we further elaborate on the previous works, along with interpretations and applications related to our results.

	\medskip
	\noindent
	\textbf{Acknowledgments.} 
	Y.C.R. was partially supported by an FWO Postdoctoral Fellowship (1220122N). 
	F.M. was partially supported by KU Leuven iBOF/23/064, and FWO grants (G023721N, G0F5921N).

	\section{Notations and preliminaries}\label{sec:notation_and_prelim}
	We begin by fixing some common notation and recalling some preliminary results. Let $p \ge 1$ be a positive integer and, for each $1 \le i \le p$, let $\ee_i \in \NN^p$ be the $i$-th elementary vector $\ee_i=\left(0,\ldots,1,\ldots,0\right)$.
	For any $\bn = (n_1,\ldots,n_p) \in \ZZ^p$, we define its weight as $\lvert \bn \rvert:= n_1+\cdots+n_p$. 
	For two vectors $\bn = (n_1,\ldots,n_p) \in \NN^p$ and $\delta = (\delta_1,\ldots,\delta_p) \in \NN^p$, we denote the dot product by $\bn \cdot \delta := n_1\delta_1 + \cdots + n_p\delta_p$.
	If $\bn = (n_1,\ldots,n_p),\, \bm = (m_1,\ldots,m_p) \in \ZZ^p$ are two vectors, we write $\bn \ge \bm$ whenever $n_i \ge m_i$ for all $1 \le i \le p$, and $\bn > \bm$ whenever $n_j > m_j$ for all $1 \le j \le p$.
	We write $\mathbf{0} \in \NN^p$ for the zero vector $\mathbf{0}=(0,\ldots,0)$.  
	
	\subsection{Multigraded schemes and multidegrees}
	We fix the following setup during this subsection.
	
	\begin{setup}
		Let $\kk$ be a field and $S$ a finitely generated standard $\NN^p$-graded algebra, i.e. $\left[S\right]_{\mathbf{0}}=\kk$ and $S$ is finitely generated over $\kk$ by elements of degree $\ee_i$ with $1 \le i \le p$.
	\end{setup}
	
	Given a standard $\NN^p$-graded $\kk$-algebra $S$, we consider the corresponding multigraded scheme.
	
	\begin{definition}	
		The multiprojective scheme  $\multProj(S)$ is given by $
		\multProj(S) := \big\{ \fP \in \Spec(S) \mid \fP \text{ is $\NN^p$-graded and } \fP \not\supseteq \nn \big\},
		$
		and its scheme structure is obtained by using multi-homogeneous localizations, see e.g. \cite[\S 1]{HYRY_MULTIGRAD}. 
		The multigraded irrelevant ideal $\nn \subset S$ is given by $\nn := \left([S]_{\ee_1}\right) \cap \cdots \cap \left([S]_{\ee_p}\right)$.
	\end{definition}
	
	Let $P_S(\ttt)=P_S(t_1,\ldots,t_p) \in \QQ[\ttt]=\QQ[t_1,\ldots,t_p]$ be the \emph{(multigraded) Hilbert polynomial} of $S$, see e.g. \cite[Theorem 4.1]{HERMANN_MULTIGRAD} and \cite[Theorem 3.4]{MIXED_MULT}.
	Then, the degree of $P_S$ is equal to $r:=\dim(\multProj(S)) = \dim(S/(0:_S\nn^\infty))-p$ and 
	$
	P_S(\nu) = \dim_\kk\left([S]_\nu\right) 
	$
	for all $\nu \in \NN^p$ such that $\nu \gg \mathbf{0}$.
	Furthermore, if we write 
	\begin{equation}
		\label{eq_Hilb_poly}
		P_{S}(\ttt) = \sum_{d_1,\ldots,d_p \ge 0} e(d_1,\ldots,d_p)\binom{t_1+d_1}{d_1}\cdots \binom{t_p+d_p}{d_p},
	\end{equation}
	then $0 \le e(d_1,\ldots,d_p) \in \ZZ$ for all $d_1+\cdots+d_p = r$. 
	Interchangeably, $P_S(\ttt)$ is the Hilbert polynomial $P_X(\ttt)$ of the multiprojective scheme $X = \multProj(S)$.
	
	We embed $X = \multProj(S)$, as a closed subscheme, into the multiprojective space $\PP:=\PP_\kk^{m_1} \times_\kk \cdots \times_\kk \PP_\kk^{m_p}$.
	Under the notation of \autoref{eq_Hilb_poly}, we define the following invariants. 
	
	\begin{definition} \label{def_multdeg}
		Let $\dd = (d_1,\ldots,d_p) \in \NN^p$ with $\lvert\dd\rvert=r$. Then:
		\begin{enumerate}[(i)]
			\item $e(\dd,S) := e(d_1,\ldots,d_p)$ is the \textit{mixed multiplicity of $S$ of type $\dd$}.
			\item $\deg_\PP^\dd(X):=e(d_1,\ldots,d_p)$ is the \textit{multidegree of $X=\multProj(S)$ of type $\dd$ with respect to $\PP$}. 
		\end{enumerate} 
	\end{definition}
	
	Whenever the chosen embedding of $X$ into a product of projective spaces $\PP:=\PP_\kk^{m_1} \times_\kk \cdots \times_\kk \PP_\kk^{m_p}$ is clear from the context, we simply write $\deg^\dd(X)$ instead of $\deg_\PP^\dd(X)$.
	In classical geometrical terms, when $\kk$ is algebraically closed, $\deg_\PP^\dd(X)$ is also equal to the number of points (counting multiplicity) in the intersection of $X$ with the product $L_1 \times_\kk \cdots \times_\kk L_p  \subset \PP$, where $L_i \subseteq \PP_\kk^{m_i}$ is a general linear subspace of dimension $m_i-d_i$ for each $1 \le i \le p$, see \cite{VAN_DER_WAERDEN} and \cite[Theorem 4.7]{MIXED_MULT}.
	
	\subsection{Mixed multiplicities of ideals}
	
	Let us recall the definition of mixed multiplicities of ideals in the setting that we are considering.	For more details, we refer the reader to \cite{TRUNG_VERMA_SURVEY}. Let $R = \kk[x_0,\ldots,x_r]$ be a standard graded polynomial ring, $\mm = (x_0,\ldots,x_r)$ the graded irrelevant ideal, and  $I_1, \ldots, I_p \subset R$ a sequence of homogeneous ideals. 
	We consider the $\NN^{p+1}$-graded $\kk$-algebra 
	$$
	T(\mm \mid I_1, \ldots, I_p) \, := \, \bigoplus_{(n_0, n_1, \ldots, n_p) \in \NN^{p+1}}    \frac{\mm^{n_0} I_1^{n_1} \cdots I_p^{n_p}}{\mm^{n_0+1} I_1^{n_1} \cdots I_p^{n_p}}.
	$$
	The mixed multiplicities of the ideals $I_1,\ldots,I_p$ with respect to $\mm$ are defined as the mixed multiplicities of the algebra $T(\mm \mid I_1, \ldots, I_p)$.
	
	\begin{definition}
		Let $(d_0, \dd) = (d_0, d_1,\ldots,d_p) \in \NN^{p+1}$ with $d_0 + \lvert\dd\rvert=r$. 
		The \emph{mixed multiplicity of $I_1,\ldots,I_p$ with respect to $\mm$ of type $(d_0,\dd)$} is  $e_{(d_0,\dd)}\left(\mm \mid I_1, \ldots, I_p \right) := e\left( (d_0, \dd), \, T(\mm \mid I_1, \ldots, I_p) \right)$.
	\end{definition}

	\section{Our main tool: the saturated special fiber ring}\label{sec:saturated_special_fiber_ring}
	
	In this section, we develop and study the multigraded saturated special fiber ring. Originally, the saturated special fiber ring was introduced in \cite{MULTPROJ}, and has been successfully used to study several classes of rational maps \cite{MULT_SAT_PERF_HT_2, GEN_FREENESS_LOC_COHOM, MIXED_MULT, DEGREE_SPECIALIZATION, SAT_FIB_GOR_3, cid2019blow}.
	
	We begin by fixing the following setup, which is used throughout this section.  
	
	\begin{setup}
		\label{setup_rat_maps}
		Let $\kk$ be an arbitrary field, $R = \kk[x_0,\ldots,x_r]$ a standard graded polynomial ring and $\mm=(x_0,\ldots,x_r) \subset R$ the graded irrelevant ideal of $R$.
		For each $1 \le i \le p$, let $\{ f_{i,0}, f_{i,1}, \ldots, f_{i,m_i} \} \subset R$ be a set of forms of the same degree $\delta_i \ge 1$, $I_i = ( f_{i,0}, f_{i,1}, \ldots, f_{i,m_i} ) \subset R$ the ideal generated by these forms and $\GG_i : \PP_\kk^r \dashrightarrow \PP_\kk^{m_i}$ the rational map determined by $(\xx) = (x_0,\ldots,x_r) \mapsto (f_{i,0}(\xx), f_{i,1}(\xx), \ldots, f_{i,m_i}(\xx))$.
		Let 
		$$
		\GG = (\GG_1, \ldots, \GG_p) \;:\;  \PP_\kk^r \dashrightarrow \PP_\kk^{m_1}  \times_\kk \cdots \times_\kk \PP_\kk^{m_p}
		$$
		be the rational map given by the product of $\GG_1,\ldots,\GG_p$.
		Let $Y \subset \PP_\kk^{m_1} \times_\kk \cdots \times_\kk \PP_\kk^{m_p}$ be the closure of the image of $\GG$ and $\Gamma = \overline{\{(\xx, \GG(\xx))\}} \subset \PP_\kk^r \times_\kk \PP_\kk^{m_1}  \times_\kk \cdots \times_\kk \PP_\kk^{m_p}$ the closure of the graph of $\GG$.
		Let $\PP$ be the multiprojective space $\PP:= \PP_\kk^{m_1}  \times_\kk \cdots \times_\kk \PP_\kk^{m_p}$.
	\end{setup}
	
	With respect to the above setting, we call $Y \subset \PP_\kk^{m_1}  \times_\kk \cdots \times_\kk \PP_\kk^{m_p}$ the \emph{nonlinear multiview variety} associated to $\GG$.
	Note that, if $\delta_i = 1$ for all $i$, then we are in the classical case of \emph{multiview varieties}.
	
	\subsection{Rational maps and some related algebras}
	In this subsection, we recall some basic facts regarding rational maps.
	The \emph{(multigraded) Rees algebra} of the ideals $I_1, \ldots, I_p$ is given by
	$$
	\mathcal{R}(I_1,\ldots,I_p) := R[I_1t_1,\ldots, I_pt_p] = \bigoplus_{n_1 \ge 0,\ldots, n_p \ge 0} I_1^{n_1} \dots I_p^{n_p} t_1^{n_1} \dots t_p^{n_p} \; \subset \; R[t_1,\ldots,t_p].
	$$
	Let $\AAA$ be the standard $\NN^{p+1}$-graded polynomial ring 
	$$
	\AAA \, := \, R \otimes_\kk \kk[\yy_1] \otimes_\kk \cdots \otimes_\kk \kk[\yy_p] \, = \,  R \otimes_\kk \kk[y_{1,0}, y_{1,1},\ldots,y_{1,m_1}] \otimes_\kk \cdots \otimes_\kk \kk[y_{p,0}, y_{p,1},\ldots,y_{p,m_1}],
	$$
	where $\deg(x_i) = \ee_1  \in \NN^{p+1}$ for all $0 \le i \le r$, and $\deg(y_{i, j}) = \ee_{i+1} \in \NN^{p+1}$ for all $1 \le i \le p, 0 \le j \le m_i$.
	By setting $\deg(t_i)=-\delta_i\ee_1 + \ee_{i+1} \in \NN^{p+1}$ for all $1 \le i \le p$, the Rees algebra $\Rees(I_1,\ldots,I_p)$ inherits a natural $\NN^{p+1}$-graded structure.
	And so, we can present $\Rees(I_1,\ldots,I_p)$ as quotient of $\AAA$ via the following $\NN^{p+1}$-homogeneous surjective $R$-homomorphism
	$$
	\AAA \twoheadrightarrow \Rees(I_1,\ldots,I_p), \qquad y_{i,j} \mapsto f_{i,j}t_i.
	$$
	Thus, the $\NN^{p+1}$-graded structure of $\Rees(I_1,\ldots,I_p)$ is given by 
	$$
	\Rees(I_1,\ldots,I_p) \;=\; \bigoplus_{c \in \NN,\; \bn \in \NN^p} {\big[\Rees(I_1,\ldots,I_p)\big]}_{c, \bn}\ , 
	$$ 
	where each $\NN^{p+1}$-graded part is
	$$
	{\big[\Rees(I_1,\ldots,I_p)\big]}_{c, \bn} \;=\;  {\big[\mathbf{I}^\bn\big]}_{c + \bn \cdot \delta}\ttt^\bn \;=\; {\big[I_1^{n_1}\cdots I_p^{n_p}\big]}_{c + n_1\delta_1 + \cdots + n_p\delta_p} t_1^{n_1}\cdots t_p^{n_p}.
	$$
	
	Since we are primarily interested in the $R$-grading of the Rees algebra, we fix the following notation.
	
	\begin{notation}
		Let $\bmm$ be an $\NN^{p+1}$-graded module over $\AAA$.
		We denote by ${\left[\bmm\right]}_c$ the ``one-sided'' $R$-graded part
		$
		{\left[\bmm\right]}_c = \bigoplus_{\bn \in \ZZ^{p}
		} {\left[\bmm\right]}_{c,\bn}.
		$
		Then, ${\left[\bmm\right]}_{c}$ has a natural $\NN^p$-graded structure, and its $\bn$-th graded part is given by 
		$
		{\big[{\left[\bmm\right]}_c\big]}_\bn={\left[\bmm\right]}_{c,\bn}
		$
		where $\bn \in \NN^p$.
	\end{notation}

	The \emph{(multigraded) special fiber ring} of the ideals $I_1,\ldots,I_p$ is given by
	$$
	\mathfrak{F}(I_1,\ldots,I_p) \,:=\, \big[\Rees(I_1,\ldots,I_p)\big]_0 \,\cong\,  \kk\big[[I_1]_{\delta_1}t_1,\ldots, [I_p]_{\delta_p}t_p\big] \,\cong\, \bigoplus_{n_i \ge 0} \left[I_1^{n_1}\cdots I_p^{n_p}\right]_{n_1\delta_1 + \cdots + n_p\delta_p}.
	$$
	Note that $\mathfrak{F}(I_1,\ldots,I_p)$ is naturally a standard $\NN^p$-graded $\kk$-algebra.
	As the ideals $I_i$'s are equigenerated in degrees $\delta_i$'s,  Nakayama's lemma yields the natural isomorphism $\mathfrak{F}(I_1,\ldots,I_p) \cong \mathcal{R}(I_1,\ldots,I_p) \otimes_R R/\mathfrak{m}$.
	
	In the following proposition, we recall the known relationships between the rational map $\GG$ and the algebras $\Rees(I_1,\ldots,I_p)$ and $\FF(I_1,\ldots,I_p)$.
	
	\begin{proposition}
		\begin{enumerate}[(i)]
			\item The closure of the image of $\GG$ coincides with $Y = \multProj(\FF(I_1,\ldots,I_p))$.
			\item  The closure of the graph of $\GG$ coincides with $\Gamma = \multProj(\Rees(I_1,\ldots,I_p))$.
		\end{enumerate}
	\end{proposition}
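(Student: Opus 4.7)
The plan is to establish part (ii) first, and then derive (i) by taking the image under the projection onto the last $p$ factors. The key algebraic input is that $\Rees(I_1,\ldots,I_p) \subset R[t_1,\ldots,t_p]$ is a domain, so the kernel $\EEQ$ of the $\NN^{p+1}$-homogeneous surjection $\AAA \twoheadrightarrow \Rees(I_1,\ldots,I_p)$ with $y_{i,j} \mapsto f_{i,j} t_i$ is a prime ideal, and consequently $\multProj(\Rees(I_1,\ldots,I_p))$ is an irreducible closed subscheme of $\PP_\kk^r \times_\kk \PP$ cut out by $\EEQ$.

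For (ii), I would first check that the \emph{Pl\"ucker-like} relations $f_{i,k} y_{i,j} - f_{i,j} y_{i,k}$ lie in $\EEQ$, since they clearly map to zero under $y_{i,j} \mapsto f_{i,j} t_i$; these relations already cut out the graph of $\GG$ set-theoretically over the open locus $U := \PP_\kk^r \setminus V(I_1 \cdots I_p)$ on which $\GG$ is a morphism. To upgrade this to a scheme-theoretic identification, I would invert a chosen form $f_{i,k_i}$ from each $I_i$ and observe that the $R$-degree-zero part of $\Rees(I_1,\ldots,I_p)[f_{1,k_1}^{-1},\ldots,f_{p,k_p}^{-1}]$ becomes a polynomial ring in the variables $t_i = y_{i,k_i}/f_{i,k_i}$ over the corresponding affine chart of $U$. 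This exhibits the first projection $\multProj(\Rees(I_1,\ldots,I_p)) \to \PP_\kk^r$ as an isomorphism over $U$, with inverse the graph morphism $\xx \mapsto (\xx, \GG(\xx))$. Since $\multProj(\Rees(I_1,\ldots,I_p))$ is irreducible and closed, and contains the dense subset $\{(\xx, \GG(\xx)) : \xx \in U\}$, it must coincide with the closure $\Gamma$.

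For (i), I would then use that $Y$ is by definition the closure of the image of $\Gamma$ under the projection $\pi_2 : \PP_\kk^r \times_\kk \PP \to \PP$, which corresponds algebraically to the inclusion of standard $\NN^p$-graded $\kk$-algebras $\FF(I_1,\ldots,I_p) = [\Rees(I_1,\ldots,I_p)]_0 \hookrightarrow \Rees(I_1,\ldots,I_p)$. Equivalently, the surjection $\kk[y_{i,j}] \twoheadrightarrow \FF(I_1,\ldots,I_p)$, $y_{i,j} \mapsto f_{i,j} t_i$, has as its kernel the defining prime of the closure of the parametric image in $\PP$, and hence $Y = \multProj(\FF(I_1,\ldots,I_p))$. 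The main obstacle I anticipate is the scheme-theoretic identification in (ii): one must keep careful track of the $\NN^{p+1}$-grading and of the multigraded irrelevant ideal when localizing, so as to guarantee that no spurious components supported over the base locus of $\GG$ survive after taking $\multProj$.
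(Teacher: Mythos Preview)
Your proposal is correct and follows the standard argument for identifying the blow-up (resp.\ its image) with the closure of the graph (resp.\ the image) of a rational map. The paper does not actually prove this proposition: it simply refers the reader to \cite[\S 3]{DEGREE_SPECIALIZATION} for details. So there is nothing to compare at the level of technique---you have supplied an honest proof sketch where the paper gives only a citation.

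A couple of minor remarks on your write-up. First, when you localize to exhibit the isomorphism over $U$, recall that in the $\NN^{p+1}$-graded $\multProj$ you should invert one element of each elementary degree: some $x_l$ (degree $\ee_1$) together with the elements $f_{i,k_i}t_i$ (degree $\ee_{i+1}$), rather than the $f_{i,k_i}$ themselves, and then pass to the total degree-zero part. This is what yields the affine chart on which the first projection is an isomorphism. Second, your worry about ``spurious components over the base locus'' is already handled by the fact that $\Rees(I_1,\ldots,I_p)$ is a domain: $\multProj$ of an integral standard $\NN^{p+1}$-graded $\kk$-algebra is integral, so no extra components can appear.
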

	\begin{proof}
		For details, see e.g. \cite[\S 3]{DEGREE_SPECIALIZATION}.
	\end{proof}
	
	We will now recall some equivalent conditions for a rational map to be generically finite. 
	For any homogeneous ideal $J \subset R$, we denote by $\ell(J) := \dim\left(\Rees(J) / \mm \Rees(J)\right)$ the \emph{analytic spread} of $J$.
	We have the following equality 
	$$
	\dim(Y) = \dim\left(\kk\Big[[I_1]_{\delta_1}t_1,\ldots, [I_p]_{\delta_p}t_p\Big]\right)-p.
	$$
	The Segre embedding yields the isomorphism 
	$$
	Y = \multProj\left(\kk\Big[[I_1]_{\delta_1}t_1,\ldots, [I_p]_{\delta_p}t_p\Big]\right) \cong \Proj\left(\kk\Big[[I_1\cdots I_p]_{\delta_1+\cdots + \delta_p}\Big]\right).
	$$
	Thus, we obtain that $\dim(Y) = \ell(I_1\cdots I_p) -1$.
	Finally, we say that the rational map $\GG$ is \emph{generically finite} when any of the following equivalent conditions is satisfied: 
	\begin{enumerate}[(i)]
		\item $\left[K(\PP_\kk^r):K(Y)\right] < \infty$ where $K(\PP_\kk^r)$ and $K(Y)$ denote the fields of rational functions of $\PP_\kk^r$ and $Y$, respectively.
		\item $\dim(Y)=r$.
		\item $\ell(I_1\cdots I_p) -1 = r$.
	\end{enumerate} 
	
	Whenever $\GG$ is generically finite, we define the {\em degree} of $\GG:\PP_\kk^r \dashrightarrow \PP_\kk^{m_1}  \times_\kk \cdots \times_\kk \PP_\kk^{m_p}$ to be
	$$
	\deg(\GG):=\left[K(\PP_\kk^r):K(Y)\right].
	$$
	The map $\GG$ is said to be \emph{birational} if $\deg(\GG)=1$.
	If the degree of the field extension $K(\PP_\kk^r)|K(Y)$ is infinite, then we say that $\GG$ has no well-defined degree and, by convention, we write $\deg(\GG)=0$. 
	
	\subsection{A multigraded version of the saturated special fiber ring}\label{subsec:saturated_special_fiber_ring}
	
	Our main tool to study the rational map $\GG : \PP_\kk^r \dashrightarrow \PP_\kk^{m_1}  \times_\kk \cdots \times_\kk \PP_\kk^{m_p}$ is the following algebra.
	
	\begin{definition}\label{def_sat_special_fiber_ring}
		The \emph{(multigraded) saturated special fiber ring} of the ideals $I_1,\ldots,I_p$ is the $\NN^p$-graded $\kk$-algebra given by
		$$
		\sfib(I_1,\ldots,I_p) \, :=\, \bigoplus_{\bn \in \NN^p} \Big[\left(\mathbf{I}^\bn :_R \mm^\infty\right)\Big]_{\bn \cdot \delta} = \bigoplus_{n_1 \ge 0,\ldots, n_p \ge 0}  \Big[\left(I_1^{n_1}\cdots I_p^{n_p} :_R \mm^\infty\right)\Big]_{n_1\delta_1+\cdots+n_p\delta_p}.
		$$
	\end{definition}

	An important result regarding the saturated special fiber ring is the following.
	
	\begin{proposition}
		We have that $\sfib(I_1,\ldots,I_p)$ is a finitely generated $\NN^p$-graded module over the standard $\NN^p$-graded $\kk$-algebra $\FF(I_1,\ldots,I_p)$.
		In particular, $\sfib(I_1,\ldots,I_p)$ has a Hilbert polynomial $P_{\sfib(I_1,\ldots,I_p)}(\ttt)$.
	\end{proposition}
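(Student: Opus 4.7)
The statement comprises two parts: finite generation of $\sfib(I_1,\ldots,I_p)$ as an $\FF(I_1,\ldots,I_p)$-module, and the existence of a Hilbert polynomial for the resulting graded module. Since $\FF(I_1,\ldots,I_p)$ is a Noetherian standard $\NN^p$-graded $\kk$-algebra, the second assertion is automatic from the first by the standard multigraded Hilbert theory recalled earlier. Thus the plan is to concentrate entirely on finite generation. The first step is a routine verification that $\sfib(I_1,\ldots,I_p)$ really is an $\FF(I_1,\ldots,I_p)$-submodule of $R[t_1,\ldots,t_p]$: given $f \in [\mathbf{I}^\bn]_{\bn \cdot \delta}$ and $g \in [\mathbf{I}^\bm :_R \mm^\infty]_{\bm \cdot \delta}$, picking $k \ge 0$ with $\mm^k g \subset \mathbf{I}^\bm$ yields $\mm^k(fg) \subset \mathbf{I}^{\bn+\bm}$, so $fg \in [\mathbf{I}^{\bn+\bm} :_R \mm^\infty]_{(\bn+\bm) \cdot \delta}$. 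Hence $\sfib(I_1,\ldots,I_p)$ is in fact an $\FF(I_1,\ldots,I_p)$-subalgebra of $R[t_1,\ldots,t_p]$.

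For finite generation itself I would adopt a geometric viewpoint. The key identification is
\[
[\sfib(I_1,\ldots,I_p)]_\bn \, = \, [\mathbf{I}^\bn :_R \mm^\infty]_{\bn \cdot \delta} \, = \, \HH^0\bigl(\PP_\kk^r, \, \widetilde{\mathbf{I}^\bn}(\bn \cdot \delta)\bigr).
\]
Passing to the graph $\Gamma = \multProj(\Rees(I_1,\ldots,I_p)) \subset \PP_\kk^r \times_\kk \PP$, the projection $\pi \colon \Gamma \to Y$ is a projective morphism. Hence $\pi_* \mathcal{O}_\Gamma$ is a coherent $\mathcal{O}_Y$-algebra by Grothendieck's coherence theorem, and Serre's finiteness theorem applied to the multiprojective scheme $Y$ gives that
\[
\bigoplus_{\bn \in \NN^p} \HH^0\bigl(Y, \mathcal{O}_Y(\bn) \otimes \pi_* \mathcal{O}_\Gamma\bigr) \, \cong \, \bigoplus_{\bn \in \NN^p} \HH^0\bigl(\Gamma, \mathcal{O}_\Gamma(\mathbf{0}, \bn)\bigr)
\]
is a finitely generated graded module over $\bigoplus_{\bn \in \NN^p} \HH^0(Y, \mathcal{O}_Y(\bn))$, which is itself a finitely generated $\FF(I_1,\ldots,I_p)$-module. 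To finish, I would compare this module with $\sfib(I_1,\ldots,I_p)$ in large multidegree $\bn$: saturation against the $\yy$-variables is automatic in that range, so only the $\mm$-saturation survives and exactly matches the defining formula of $\sfib$. The discrepancy sits in finitely many multidegrees, is $\kk$-finite-dimensional, and contributes only finitely many extra module generators.

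The main obstacle will be exactly this last comparison: carefully tracking the passage between the algebraic $\mm$-saturation used to define $\sfib$ and the full multigraded geometric saturation governing global sections on $\Gamma$, and ensuring the multigraded structures align on the nose. A purely algebraic alternative that I would keep in reserve is to observe that $[\sfib]_\bn / [\FF]_\bn \, \cong \, [\HL^0(R/\mathbf{I}^\bn)]_{\bn \cdot \delta}$, view these as the appropriate strand of the local cohomology module $\HL^1(\Rees(I_1,\ldots,I_p))$ in the $\NN^{p+1}$-grading introduced above, and then extend the finiteness arguments developed for the singly graded saturated special fiber ring in \cite{MULTPROJ, MULT_SAT_PERF_HT_2} to the multigraded setting. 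Both routes arrive at the same conclusion, and I would select whichever yields the cleanest bookkeeping in the multigraded framework.
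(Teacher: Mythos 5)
Your preliminary observations (the module/algebra structure of $\sfib(I_1,\ldots,I_p)$ over $\FF(I_1,\ldots,I_p)$, and the reduction of the Hilbert-polynomial claim to finite generation) are fine, and your ``reserve'' algebraic alternative is in fact the paper's route: the proof is declared analogous to \cite[Proposition 2.7]{MULTPROJ}, i.e.\ one uses the exact sequence $0 \to \FF(I_1,\ldots,I_p) \to \sfib(I_1,\ldots,I_p) \to {\left[\HL^1(\Rees(I_1,\ldots,I_p))\right]}_{0} \to 0$ together with the finite generation of ${\left[\HL^1(\Rees(I_1,\ldots,I_p))\right]}_{0}$ as an $\NN^p$-graded $\FF(I_1,\ldots,I_p)$-module (exactly the finiteness the paper later quotes from \cite[Theorem 4.4]{DEGREE_SPECIALIZATION}). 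Had you carried that route out, you would essentially reproduce the intended proof.

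The route you actually propose, however, has a genuine gap at precisely the step you flag as the main obstacle. The identification $[\sfib(I_1,\ldots,I_p)]_\bn \cong \HH^0\big(\Gamma,\OO_\Gamma(\mathbf{0},\bn)\big)$ is only guaranteed for $\bn \gg \mathbf{0}$, and for $p\ge 2$ the complementary region of $\NN^p$ (all $\bn$ with some coordinate small) is infinite; so your assertion that the discrepancy ``sits in finitely many multidegrees'' and hence contributes ``finitely many extra module generators'' is false as stated. Worse, agreement in degrees $\bn \gg \mathbf{0}$ alone does not transfer finite generation at all: over $\kk[z_1,z_2]$ with the standard $\NN^2$-grading, the module having $\kk$ in every degree $(n,0)$ and zero multiplication agrees with the zero module in all degrees $\bn > \mathbf{0}$, has finite-dimensional graded pieces, yet is not finitely generated. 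To salvage the geometric argument you would need a comparison valid in \emph{every} multidegree, e.g.\ a degreewise injection $[\sfib(I_1,\ldots,I_p)]_\bn = \HH^0\big(\PP_\kk^r, (\mathbf{I}^\bn)^\sim(\bn\cdot\delta)\big) \hookrightarrow \HH^0\big(\Gamma,\OO_\Gamma(\mathbf{0},\bn)\big)$ coming from the natural map of sheaves $(\mathbf{I}^\bn)^\sim(\bn\cdot\delta) \to {\pi_1}_*\OO_\Gamma(\mathbf{0},\bn)$ for the projection $\pi_1 : \Gamma \to \PP_\kk^r$ (injective on sections because the ideal sheaf is torsion-free and the map is an isomorphism off the base locus); then $\sfib(I_1,\ldots,I_p)$ is a submodule of a finitely generated module over the Noetherian ring $\FF(I_1,\ldots,I_p)$ and finite generation follows. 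Without such an all-degrees comparison (or the local cohomology argument), the proof is incomplete.
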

	\begin{proof}
		The proof is analogous to the proof of \cite[Proposition 2.7]{MULTPROJ}.
	\end{proof}
	
	Our main result is the following theorem, which says that the mixed multiplicities of the saturated special fiber ring are equal to the product of the multidegrees of $Y$ and the degree of $\GG$.
	This provides an extension \cite[Theorem 2.4]{MULTPROJ} for nonlinear multiview varieties.
	
	\begin{theorem}\label{thm:main_result_sat_special_fiber_ring}
		Assume \autoref{setup_rat_maps} and suppose that $\GG : \PP_\kk^r \dashrightarrow \PP_\kk^{m_1}  \times_\kk \cdots \times_\kk \PP_\kk^{m_p}$ is generically finite. 
		Then, the following statements hold:
		\begin{enumerate}[(i)]
			\item For all $\dd \in \NN^p$ with $\lvert \dd \rvert = r$, we have the equality
			$$
			e\big(\dd, \sfib(I_1,\ldots,I_p)\big) \; = \; \deg(\GG) \cdot \deg_\PP^\dd(Y).
			$$
			\item Under the additional condition of $\FF(I_1,\ldots,I_p)$ being integrally closed, $\GG$ is birational if and only if $\sfib(I_1,\ldots,I_p) = \FF(I_1,\ldots,I_p)$.
		\end{enumerate}
	\end{theorem}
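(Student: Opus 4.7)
The plan is to prove part (i) by comparing the Hilbert polynomial of $\sfib(I_1,\ldots,I_p)$ with $\deg(\GG)$ times that of the coordinate ring of $Y\subset\PP$, using the graph $\Gamma$ as a geometric bridge; part (ii) then follows from standard integral closure arguments. The starting point is the cohomological interpretation of saturation against the irrelevant ideal,
$$
[\sfib(I_1,\ldots,I_p)]_\bn \;=\; [I_1^{n_1}\cdots I_p^{n_p}:_R \mm^\infty]_{\bn\cdot\delta} \;=\; H^0\big(\PP_\kk^r,\,\widetilde{I^\bn}(\bn\cdot\delta)\big),
$$
where $\widetilde{I^\bn}$ is the sheaf associated to the multigraded ideal power. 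To translate this into a quantity on the image, I would exploit the two projections $\pi_1\colon \Gamma \to \PP_\kk^r$ and $\pi_2\colon \Gamma \to Y$. The first is birational, with $\Gamma$ realized as the multi-blow-up of $\PP_\kk^r$ along $I_1,\ldots,I_p$, and standard Rees algebra/blow-up formalism yields $\pi_{1,*}\pi_2^*\OO_\PP(\bn) \cong \widetilde{I^\bn}(\bn\cdot\delta)$. Combined with Serre-type vanishing of higher direct images and higher cohomology for $\bn\gg \mathbf{0}$, the Leray spectral sequence gives the asymptotic
$$
\dim_\kk [\sfib(I_1,\ldots,I_p)]_\bn \;=\; \dim_\kk H^0(\Gamma,\,\pi_2^*\OO_\PP(\bn)) \;+\; O(|\bn|^{r-1}).
$$

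Since $\GG$ is generically finite, so is $\pi_2\colon \Gamma \to Y$, with generic degree $\deg(\GG)$; in particular the coherent $\OO_Y$-module $\pi_{2,*}\OO_\Gamma$ has generic rank $\deg(\GG)$. The projection formula gives $H^0(\Gamma,\,\pi_2^*\OO_\PP(\bn)) = H^0(Y,\,\pi_{2,*}\OO_\Gamma \otimes \OO_Y(\bn))$, and the Hilbert polynomial of this twisted sheaf produces
$$
\dim_\kk H^0(\Gamma,\,\pi_2^*\OO_\PP(\bn)) \;=\; \deg(\GG)\cdot \dim_\kk H^0(Y,\,\OO_Y(\bn)) \;+\; O(|\bn|^{r-1}).
$$
Reading off the degree-$r$ leading coefficients of both sides, where the left produces the mixed multiplicities $e(\dd,\sfib(I_1,\ldots,I_p))$ and the right produces $\deg(\GG)\cdot \deg_\PP^\dd(Y)$, yields the equality in part (i).

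For part (ii), the inclusion $\FF(I_1,\ldots,I_p) \subseteq \sfib(I_1,\ldots,I_p)$ is a graded module-finite, hence integral, extension of $\NN^p$-graded $\kk$-domains of Krull dimension $r+p$. Part (i) implies that the generic rank of $\sfib$ as an $\FF$-module equals $\deg(\GG)$, which is also the degree of the induced finite morphism $\multProj(\sfib) \to Y = \multProj(\FF)$. Thus $\GG$ is birational if and only if $\sfib$ and $\FF$ share the same field of fractions. Under this condition $\sfib$ lies in the integral closure of $\FF$ in that common field of fractions, and integral closedness of $\FF$ then forces $\sfib = \FF$. Conversely, if $\sfib = \FF$ then part (i) gives $\deg(\GG) \cdot \deg_\PP^\dd(Y) = \deg_\PP^\dd(Y)$ for every $\dd$ with $|\dd|=r$, and choosing $\dd$ with $\deg_\PP^\dd(Y) > 0$ forces $\deg(\GG) = 1$.

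The main obstacle is the very first step, namely verifying the identification $\pi_{1,*}\pi_2^*\OO_\PP(\bn) \cong \widetilde{I^\bn}(\bn\cdot\delta)$ together with the asymptotic vanishing of higher direct images and higher cohomology that permits identifying $\dim_\kk [\sfib]_\bn$ with $\dim_\kk H^0(\Gamma,\pi_2^*\OO_\PP(\bn))$ up to lower order. A technically cleaner alternative, in the spirit of \cite[Theorem 2.4]{MULTPROJ}, would be to bypass the geometric language entirely and work directly with the $\NN^{p+1}$-graded Hilbert polynomial of the multi-Rees algebra and its $\mm$-saturation, controlling the error terms via uniform bounds on the $\mm$-local cohomology of multigraded modules.
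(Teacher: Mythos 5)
Your proposal is correct in outline, but it takes a genuinely different route from the paper for part (i). The paper does not work with the graph sheaf-theoretically at this point: it restricts both $\FF(I_1,\ldots,I_p)$ and $\sfib(I_1,\ldots,I_p)$ to diagonals, observing that the $\bn$-th diagonal subalgebras are exactly the special fiber ring and the saturated special fiber ring of the single ideal $I_1^{n_1}\cdots I_p^{n_p}$; composing Veronese and Segre embeddings turns $\GG$ into a singly projective rational map $\GG'_\bn$ with the same degree and with image $Y'_\bn\cong Y$, so the single-graded result \cite[Theorem 2.4]{MULTPROJ} gives $F_T(\bn)=\deg(\GG)\,F_S(\bn)$ as polynomials, and comparing coefficients yields (i). That reduction buys economy: no new multigraded sheaf cohomology has to be established. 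Your argument instead re-proves the multigraded statement directly via $\pi_1:\Gamma\to\PP_\kk^r$ and $\pi_2:\Gamma\to Y$, the projection formula, and the generic rank $\deg(\GG)$ of $\pi_{2*}\OO_\Gamma$; this is self-contained and closer in spirit to how the paper later proves \autoref{thm_deg_graph_sat_fib} (there via local cohomology and \cite[Theorem 4.4]{DEGREE_SPECIALIZATION} rather than relative Serre vanishing), but it requires exactly the multigraded technicalities you flag: the identification $\pi_{1*}\pi_2^*\OO_\PP(\bn)\cong\widetilde{I^\bn}(\bn\cdot\delta)$ holds only for $\bn\gg\mathbf{0}$ in general (not for all $\bn$), and one needs multigraded Serre-type vanishing and the fact that kernels/cokernels supported in dimension $<r$ contribute Hilbert polynomials of total degree $<r$; all of this is standard but must be quoted or proved in the $\NN^p$-graded setting, whereas the diagonal trick sidesteps it entirely. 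For part (ii) your argument (torsion-free rank of $\sfib$ over $\FF$ equals $\deg(\GG)$ by (i), birationality iff $\Quot(\sfib)=\Quot(\FF)$, integral closedness forces equality, and the converse by picking $\dd$ with $\deg_\PP^\dd(Y)>0$) is complete and is essentially the same standard argument the paper delegates to \cite[Theorem 2.4(iv)]{MULTPROJ}.
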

	\begin{proof}
		(i) For ease of notation, let $S := \FF(I_1,\ldots,I_p)$ and $T := \sfib(I_1,\ldots,I_p)$.
		We consider the following functions
		$$
		F_S(\bn) = \lim_{n \rightarrow \infty} \frac{\dim_\kk\left(\left[S\right]_{n\cdot \bn}\right)}{n^r} \quad \text{ and } \quad F_T(\bn) = \lim_{n \rightarrow \infty} \frac{\dim_\kk\left(\left[T\right]_{n\cdot \bn}\right)}{n^r}.
		$$
		These functions coincide with the top-degree parts of the Hilbert polynomials of $S$ and $T$. 
		In other words, we can write
		$$
		F_S(\bn) = \sum_{\dd \in \NN^p, \, \lvert \dd \rvert = r} \frac{\deg^\dd(Y)}{\dd!} \, \bn^\dd \quad \text{ and } \quad F_T(\bn) = \sum_{\dd \in \NN^p, \, \lvert \dd \rvert = r} \frac{e\big(\dd, \sfib(I_1,\ldots,I_p)\big)}{\dd!} \, \bn^\dd .
		$$
		Since we have the equalities 
		$$
		S^{(\bn)} = \bigoplus_{n = 0}^\infty \left[I_1^{nn_1}\cdots I_p^{nn_p}\right]_{nn_1\delta_1 + \cdots + nn_p\delta_p} = \FF(I_1^{n_1} \cdots I_p^{n_p})
		$$ 
		and 
		$$
		T^{(\bn)} = \bigoplus_{n = 0}^\infty \left[\left(I_1^{nn_1}\cdots I_p^{nn_p}:_R \mm^\infty\right)\right]_{nn_1\delta_1 + \cdots + nn_p\delta_p} = \sfib(I_1^{n_1} \cdots I_p^{n_p}),
		$$
		it follows that the functions $F_S$ and $F_T$ can be obtained by computing the multiplicities of the special fiber ring and the saturated special fiber ring of products of powers of the ideals $I_i$'s.
		More precisely, we have that
		$$
		F_S(\bn) = e\left(\FF(I_1^{n_1}\cdots I_p^{n_p})\right) \quad \text{ and } \quad F_T(\bn) = e\left(\sfib(I_1^{n_1}\cdots I_p^{n_p})\right).
		$$
		For each $\bn \in \ZZ_+^r$, we compose Veronese and Segre embeddings to obtain the following natural embedding 
		$$
		\Delta : \PP_\kk^{m_1}  \times_\kk \cdots \times_\kk \PP_\kk^{m_p} \;\hookrightarrow\; \PP_\kk^{\binom{n_1+m_1}{m_1}-1} \times_\kk \cdots \times_\kk \PP_\kk^{\binom{n_p+m_p}{m_p}-1} \;\hookrightarrow\; \PP_\kk^m
		$$
		where $m = \binom{n_1+m_1}{m_1} \cdots \binom{n_p+m_p}{m_p} - 1$.
		Hence, we obtain the following commutative diagram 
		\begin{equation*}
			\label{eq:graph_Pi}
			\begin{tikzpicture}[baseline=(current  bounding  box.center)]
				\matrix (m) [matrix of math nodes,row sep=2.8em,column sep=13.5em,minimum width=2em, text height=1.5ex, text depth=0.25ex]
				{
					\PP_\kk^r & Y    \\
					& Y_\bn',  \\
				};
				\path[-stealth]
				(m-1-2) edge node [right]	{$\cong$}  (m-2-2)			
				;		
				\draw[->,dashed] (m-1-1)--(m-1-2) node [midway,above] {$\GG$};	
				\draw[->,dashed] (m-1-1)--(m-2-2) node [midway,above] {$\GG_\bn'$};	
			\end{tikzpicture}	
		\end{equation*}
		where $Y_\bn' \subset \PP_\kk^m$ is the image of $Y \subset  \PP_\kk^{m_1}  \times_\kk \cdots \times_\kk \PP_\kk^{m_p}$ under the embedding $\Delta$ and $\GG_\bn' : \PP_\kk^r \dashrightarrow Y_\bn' \subset \PP_\kk^m$ is the rational map induced by the set of generators naturally obtained for the ideal $I_1^{n_1}\cdots I_p^{n_p}$.
		It is clear that $\deg(\GG) = \deg(\GG_\bn')$.
		So, for each $\bn \in \ZZ_+^p$, \cite[Theorem 2.4]{MULTPROJ} may be applied to the rational map $\GG_\bn'$ and gives the following equalities 
		$$
		F_T(\bn) = e\left(\sfib(I_1^{n_1}\cdots I_p^{n_p})\right) = \deg(\GG') \cdot \deg(Y_\bn') = \deg(\GG) \cdot e\left(\FF(I_1^{n_1}\cdots I_p^{n_p})\right) = \deg(\GG) \cdot F_S(\bn).
		$$
		Finally, by comparing the coefficients of the polynomials $F_S(\bn)$ and $F_T(\bn)$, the claimed equality 
		$$
		e\big(\dd, \sfib(I_1,\ldots,I_p)\big) \; = \; \deg(\GG) \cdot \deg^\dd(Y)
		$$ 
		follows for all $\dd \in \NN^p$ with $\lvert \dd \rvert = r$.
		
		(ii) The proof follows similarly to the proof of \cite[Theorem 2.4(iv)]{MULTPROJ}.
	\end{proof}

	\subsection{A cohomological formula relating the degree of the rational map and the multidegrees of the image}
	
	Throughout this subsection, we continue using \autoref{setup_rat_maps}.	
	Here, we give a formula that relates the degree of $\GG$ with the mixed multiplicities of ${\left[\HL^1(\Rees(I_1,\ldots,I_p))\right]}_{0}$ and the multidegrees of the image $Y$.
	Notice that ${\left[\HL^1(\Rees(I_1,\ldots,I_p))\right]}_{0}$ is a finitely generated $\NN^p$-graded module over the special fiber ring $\FF(I_1,\ldots,I_p)$, and so we can consider its mixed multiplicities.
	The following result extends \cite[Corollary 2.12]{MULTPROJ} into the current setting.

	\begin{corollary}
		Assume \autoref{setup_rat_maps} and suppose that $\GG : \PP_\kk^r \dashrightarrow \PP_\kk^{m_1}  \times_\kk \cdots \times_\kk \PP_\kk^{m_p}$ is generically finite. 
		Then, for all $\dd \in \NN^p$ with $\lvert \dd \rvert = r$, we have the equality 
		$$
		\deg^\dd(Y) \cdot \big(\deg(\GG) - 1 \big) \, = \, e\left(\dd, {\left[\HL^1(\Rees(I_1,\ldots,I_p))\right]}_{0}\right).
		$$
		In particular, $\GG$ is birational if and only if $\dim \left( \Supp\left({\left[\HL^1(\Rees(I_1,\ldots,I_p))\right]}_{0}\right) \cap Y \right) < r$.
	\end{corollary}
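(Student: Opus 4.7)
The plan is to reduce the claim to \autoref{thm:main_result_sat_special_fiber_ring}(i) by identifying ${\left[\HL^1(\Rees(I_1,\ldots,I_p))\right]}_0$ with the cokernel of the natural inclusion $\FF(I_1,\ldots,I_p) \hookrightarrow \sfib(I_1,\ldots,I_p)$, and then comparing mixed multiplicities through the resulting short exact sequence. We may harmlessly assume $r \ge 1$, as the case $r = 0$ is vacuous.

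First, I would establish the short exact sequence
\begin{equation*}
0 \to \FF(I_1,\ldots,I_p) \to \sfib(I_1,\ldots,I_p) \to {\left[\HL^1(\Rees(I_1,\ldots,I_p))\right]}_0 \to 0
\end{equation*}
of finitely generated $\NN^p$-graded $\FF(I_1,\ldots,I_p)$-modules. For this, recall that for each $\bn \in \NN^p$ the multigraded strand ${\left[\Rees(I_1,\ldots,I_p)\right]}_\bn$ is, as a graded $R$-module, a shift of the ideal $I_1^{n_1}\cdots I_p^{n_p}\subset R$, and that $\HL^0(R) = \HL^1(R) = 0$ since $R = \kk[x_0,\ldots,x_r]$ has depth $r+1 \ge 2$. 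Applying $\HL^{\bullet}(-)$ to $0 \to I_1^{n_1} \cdots I_p^{n_p} \to R \to R/I_1^{n_1}\cdots I_p^{n_p} \to 0$ yields the canonical isomorphism
\begin{equation*}
\HL^1(I_1^{n_1}\cdots I_p^{n_p}) \;\cong\; \HL^0(R/I_1^{n_1}\cdots I_p^{n_p}) \;=\; \bigl(I_1^{n_1}\cdots I_p^{n_p} :_R \mm^\infty\bigr)/(I_1^{n_1}\cdots I_p^{n_p}).
\end{equation*}
Tracking the shift $\bn \cdot \delta$ between the Rees grading and the natural grading on the ideals, the $R$-degree zero part of $\HL^1(\Rees(I_1,\ldots,I_p))$ in multi-degree $\bn$ is $\left[\bigl(I_1^{n_1}\cdots I_p^{n_p} :_R \mm^\infty\bigr)/(I_1^{n_1}\cdots I_p^{n_p})\right]_{\bn \cdot \delta}$, which is exactly ${\left[\sfib(I_1,\ldots,I_p)\right]}_\bn / {\left[\FF(I_1,\ldots,I_p)\right]}_\bn$ by \autoref{def_sat_special_fiber_ring}.

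Next, I would apply additivity of Hilbert polynomials along this short exact sequence. For every $\dd \in \NN^p$ with $\lvert \dd \rvert = r$ we obtain
\begin{equation*}
e\bigl(\dd, {\left[\HL^1(\Rees(I_1,\ldots,I_p))\right]}_0\bigr) \;=\; e(\dd, \sfib(I_1,\ldots,I_p)) - e(\dd, \FF(I_1,\ldots,I_p)) \;=\; \bigl(\deg(\GG) - 1\bigr) \cdot \deg_\PP^\dd(Y),
\end{equation*}
where the second equality uses \autoref{thm:main_result_sat_special_fiber_ring}(i) together with the identification $e(\dd, \FF(I_1,\ldots,I_p)) = \deg_\PP^\dd(Y)$ built into \autoref{def_multdeg}. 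For the \emph{in particular} clause, $\GG$ is birational if and only if $\deg(\GG) = 1$; equivalently, $\bigl(\deg(\GG) - 1\bigr) \deg_\PP^\dd(Y) = 0$ for every $\dd$ with $\lvert \dd \rvert = r$; equivalently, every top-degree mixed multiplicity of ${\left[\HL^1(\Rees(I_1,\ldots,I_p))\right]}_0$ vanishes; equivalently, its Hilbert polynomial has degree strictly less than $r$, which is exactly $\dim\bigl(\Supp({\left[\HL^1(\Rees(I_1,\ldots,I_p))\right]}_0) \cap Y\bigr) < r$.

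The main obstacle in this plan is not conceptual but notational: one must carefully track the degree shifts implicit in the $\NN^{p+1}$-grading of $\Rees(I_1,\ldots,I_p)$ in order to identify the $R$-degree zero strand of $\HL^1(\Rees(I_1,\ldots,I_p))$ with the cokernel $\sfib(I_1,\ldots,I_p)/\FF(I_1,\ldots,I_p)$. Once this identification is secured, the argument reduces to additivity of Hilbert polynomials along a short exact sequence and a direct invocation of the first main theorem.
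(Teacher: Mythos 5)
Your proposal is correct and follows essentially the same route as the paper: identify ${\left[\HL^1(\Rees(I_1,\ldots,I_p))\right]}_0$ with the cokernel of $\FF(I_1,\ldots,I_p)\hookrightarrow\sfib(I_1,\ldots,I_p)$ via the short exact sequence $0\to\FF\to\sfib\to{\left[\HL^1(\Rees)\right]}_0\to 0$, then use additivity of mixed multiplicities together with \autoref{thm:main_result_sat_special_fiber_ring}(i). The extra details you supply (the local-cohomology identification of the strand and the dimension argument for the birationality criterion) are exactly what the paper leaves implicit.
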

	\begin{proof}
		We have the short exact sequence 
		$$
		0 \,\rightarrow\, \bigoplus_{\bn \in \NN^p} \big[\mathbf{I}^\bn \big]_{\bn \cdot \delta} \, \rightarrow \, \bigoplus_{\bn \in \NN^p} \Big[\left(\mathbf{I}^\bn :_R \mm^\infty\right)\Big]_{\bn \cdot \delta} \,\rightarrow\, \bigoplus_{\bn \in \NN^p} \Big[\HL^0\left(R/\mathbf{I}^\bn \right)\Big]_{\bn \cdot \delta} \, \rightarrow\, 0, 
		$$
		which can then be written as 
		$$
		0 \rightarrow \FF(I_1,\ldots,I_p) \rightarrow \sfib(I_1,\ldots,I_p) \rightarrow {\left[\HL^1(\Rees(I_1,\ldots,I_p))\right]}_{0} \rightarrow 0.
		$$
		The additivity of mixed multiplicities gives the equality 
		$$
		e(\dd, \sfib(I_1,\ldots,I_p))  = e(\dd,\FF(I_1,\ldots,I_p)) + e(\dd, {\left[\HL^1(\Rees(I_1,\ldots,I_p))\right]}_{0}),
		$$
		and so the result follows from \autoref{thm:main_result_sat_special_fiber_ring}.
	\end{proof}

	\subsection{A degree formula for rational maps}\label{subsec:degree_formula}
	Here, we study the case when $\GG : \PP_\kk^r \dashrightarrow \PP_\kk^{m_1}  \times_\kk \cdots \times_\kk \PP_\kk^{m_p}$ is not defined only in a finite number of points. 
	In particular, we provide a formula that relates the degree of $\GG$,  the multidegrees of the image, and the mixed multiplicities of the base points. 
	This can be seen as an extension of a known degree formula, see~e.g.~\cite[Theorem 2.5]{Laurent_Jouanolou_Closed_Image}, \cite[Theorem 6.6]{Sim_Ulr_Vasc_mult} and \cite[Theorem 3.3]{MULTPROJ}).
	This formula could also be derived by utilizing \cite[Proposition 4.4]{FULTON_INTERSECTION_THEORY}.
	We also give a general upper bound for the degree of the image for arbitrary rational maps that are generically finite.
	
	Hereafter, we use the same notations and conventions of \autoref{setup_rat_maps}.
	We have that the \emph{base locus of $\GG$} (i.e.~the points where $\GG$ is not well-defined) is given by 
	$$
	\BB(\GG) := V(I_1\cdots I_p) \,\subset\, \PP_\kk^r.
	$$	
	We then have that $\dim(\BB(\GG)) =0$ (i.e.~the base locus is \emph{finite}) if and only if $\dim(R/I_i) \le 1$ for all $1 \le i \le p$.
	Assuming that $\dim(\BB(\GG))=0$,
	we then have the equalities
	\begin{align*}
		\dim_{\kk}\Big(\HH^0\Big(\PP_\kk^r, \mathcal{O}_{\PP_\kk^r}/(I_1^{n_1}\cdots I_p^{n_p})^{\sim}\Big)\Big)  &= \sum_{\pp \in \BB(\GG)} \dim_{\kk}\Big({\big(\mathcal{O}_{\PP_\kk^r}/(I_1^{n_1}\cdots I_p^{n_p})^{\sim}\big)}_{\pp}\Big) \\
		&=\sum_{\pp \in \BB(\GG)} \left[\kappa(\pp):\kk\right]\cdot \text{length}_{\mathcal{O}_{\PP_\kk^r,\pp}}\Big({\big(\mathcal{O}_{\PP_\kk^r}/(I_1^{n_1}\cdots I_p^{n_p})^{\sim}\big)}_{\pp}\Big) \\
		&= \sum_{\pp \in \BB(\GG)} \left[\kappa(\pp):\kk\right]\cdot \text{length}_{R_{\pp}}\Big(R_{\pp}/(I_1^{n_1}\cdots I_p^{n_p})R_\pp\Big)
	\end{align*}
	for all $n_1,\ldots,n_p \in \NN$, where $\kappa(\pp)$ denotes the residue field of the local ring $\mathcal{O}_{\PP_\kk^r,\pp}$.
	Since we have $\dim(R_\pp/I_iR_\pp) = 0$ by assumption, it follows that the function 
	$$
	P_{R_\pp;I_1,\ldots,I_p}(n_1,\ldots,n_p) \;:=\; \lim\limits_{n\rightarrow \infty} \frac{\text{length}_{R_{\pp}}\Big(R_{\pp}/(I_1^{nn_1}\cdots I_p^{nn_p})R_\pp\Big)}{n^r} 
	$$ 
	is a polynomial in $\QQ[n_1,\ldots,n_p]$. For further details, we refer the reader to the survey \cite{TRUNG_VERMA_SURVEY} and Chapter 17 of the book \cite{huneke2006integral}.
	The polynomial $P_{R_\pp;I_1,\ldots,I_p}(n_1,\ldots,n_p)$ is of degree $r$ if there is some $i$ with $I_i R_\pp \neq \pp R_\pp$, and the zero polynomial otherwise.
	We write the polynomial $P_{R_\pp;I_1,\ldots,I_p}$ as 
	$$
	P_{R_\pp;I_1,\ldots,I_p}(n_1,\ldots,n_p) \,=\, \sum_{ \dd=(d_1,\ldots, d_p)\in \NN^{p},\,|\dd|=r}\frac{ e_{\dd}(R_\pp; I_1,\ldots, I_p)}{d_1!\cdots d_p!} n_1^{d_1}\cdots n_p^{d_p},
	$$
	where the numbers $e_{\dd}(R_\pp; I_1,\ldots, I_p)$ are nonnegative integers called the {\it mixed multiplicities} of $R_\pp$ with respect to $I_1,\ldots, I_p$.
	By summarizing the above results and discussion, it follows that the following expression
	\begin{equation*}
		P_{\BB(\GG)}(n_1,\ldots,n_p)
		\;:=\; \lim\limits_{n\rightarrow \infty} \frac{\dim_{\kk}\Big(\HH^0\Big(\PP_\kk^r, \mathcal{O}_{\PP_\kk^r}/(I_1^{nn_1}\cdots I_p^{nn_p})^{\sim}\Big) \Big)}{n^r}
	\end{equation*}
	is a polynomial in $\QQ[n_1,\ldots,n_p]$.
	We have that $P_{\BB(\GG)}$ is a polynomial of degree $r$ if $\BB(\GG) \neq \emptyset$, and the zero polynomial otherwise.
	Furthermore, we can write  $P_{\BB(\GG)}$ as 
	$$
	P_{\BB(\GG)}(n_1,\ldots,n_p) \,=\, \sum_{ \dd=(d_1,\ldots, d_p)\in \NN^{p},\,|\dd|=r}\frac{ e_{\dd}(\BB(\GG))}{d_1!\cdots d_p!} n_1^{d_1}\cdots n_p^{d_p},
	$$
	where the numbers $e_{\dd}(\BB(\GG))$ are nonnegative integers and given by 
	$$
	e_{\dd}(\BB(\GG)) \;:=\; \sum_{\pp \in \BB(\GG)} \left[\kappa(\pp):\kk\right]\cdot e_{\dd}(R_\pp; I_1,\ldots, I_p).
	$$
	So, the nonnegative integers $e_{\dd}(\BB(\GG))$ gather the mixed multiplicities of the base points, and we call them the \emph{mixed multiplicities of the base locus of $\GG$}.
	
	\medskip	We are now ready to state the main result of this subsection. 
	
	\begin{theorem}\label{thm:zero_base_locus_degree_formula}
		Assume \autoref{setup_rat_maps} and suppose that $\GG : \PP_\kk^r \dashrightarrow \PP_\kk^{m_1}  \times_\kk \cdots \times_\kk \PP_\kk^{m_p}$ is generically finite.
		Then, for all $\dd = (d_1,\ldots,d_p) \in \NN^p$ with $\lvert \dd \rvert = r$, the following statements hold: 
		\begin{enumerate}[\rm (i)]
			\item $\deg_\PP^\dd(Y)\deg(\GG) \, \le \, \delta_1^{d_1} \cdots \delta_p^{d_p}$.
			\item If $\dim(\BB(\GG)) = 0$, then we have the equality
			$$
			\delta_1^{d_1} \cdots \delta_p^{d_p} \, = \, \deg_\PP^\dd(Y)\deg(\GG) + e_\dd(\BB(\GG)).
			$$
		\end{enumerate}
	\end{theorem}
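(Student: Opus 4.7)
The plan is to compute the top-degree part of the Hilbert polynomial of the saturated special fiber ring $\sfib(I_1,\ldots,I_p)$ in two independent ways and then match coefficients, in the spirit of the argument used in \autoref{thm:main_result_sat_special_fiber_ring}. I set $F_T(\bn) := \lim_{n\to\infty} \dim_\kk[\sfib(I_1,\ldots,I_p)]_{n\bn}/n^r$ and $F_S(\bn) := \lim_{n\to\infty} \dim_\kk[\FF(I_1,\ldots,I_p)]_{n\bn}/n^r = \sum_{|\dd|=r} \frac{\deg^\dd_\PP(Y)}{\dd!}\bn^\dd$; by \autoref{thm:main_result_sat_special_fiber_ring}(i), $F_T(\bn) = \deg(\GG)\, F_S(\bn)$.

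For part (ii), I would assume $\dim(\BB(\GG)) = 0$ and, for each $\bn \in \NN^p$, write $I := I_1^{n_1}\cdots I_p^{n_p}$ and $D := n_1\delta_1+\cdots+n_p\delta_p$. The short exact sequence of coherent sheaves on $\PP_\kk^r$
\begin{equation*}
0 \to \widetilde{I}(D) \to \OO_{\PP_\kk^r}(D) \to (\OO_{\PP_\kk^r}/\widetilde{I})(D) \to 0,
\end{equation*}
together with the fact that the quotient sheaf is supported on the finite set $\BB(\GG)$ (so its higher cohomology vanishes and its $H^0$ is independent of twist), and Serre vanishing of $H^1(\widetilde{I}(D))$ for $\bn \gg \mathbf{0}$, yields for $\bn$ large:
\begin{equation*}
\dim_\kk[I :_R \mm^\infty]_D = \binom{D+r}{r} - \sum_{\pp \in \BB(\GG)} [\kappa(\pp):\kk]\cdot\text{length}_{R_\pp}(R_\pp/IR_\pp).
\end{equation*}
Rescaling $\bn\mapsto n\bn$, dividing by $n^r$, and letting $n\to\infty$, I would extract the polynomial identity $F_T(\bn) = (n_1\delta_1+\cdots+n_p\delta_p)^r/r! - P_{\BB(\GG)}(\bn)$. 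Using the multinomial expansion $(n\cdot\delta)^r/r! = \sum_{|\dd|=r}\frac{\delta_1^{d_1}\cdots\delta_p^{d_p}}{\dd!}\bn^\dd$ and the description of $P_{\BB(\GG)}$ developed preceding the theorem, matching coefficients of $\bn^\dd$ for each $|\dd|=r$ yields the identity in part (ii).

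For part (i), the cohomological approach above is closed only under $\dim \BB(\GG) = 0$, so I would instead argue intersection-theoretically. By the geometric interpretation of $\deg^\dd_\PP(Y)$ (cf.\ \cite[Theorem 4.7]{MIXED_MULT}), the product $\deg(\GG)\deg^\dd_\PP(Y)$ equals the number of isolated intersection points, counted with multiplicities, of $d_i$ generic $\kk$-linear combinations of $f_{i,0},\ldots,f_{i,m_i}$ for $i=1,\ldots,p$, lying outside $\BB(\GG)$. Bezout's theorem shows that the total intersection of these $r$ hypersurfaces in $\PP_\kk^r$ has degree $\delta_1^{d_1}\cdots\delta_p^{d_p}$, and the excess contribution coming from positive-dimensional components (necessarily supported on $\BB(\GG)$) is nonnegative by Fulton's excess-intersection formula (\cite[Proposition 4.4]{FULTON_INTERSECTION_THEORY}, as mentioned by the authors), giving the bound in part (i).

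The hardest step is expected to be the cohomological manipulation for (ii): identifying the top-degree part of $\sum_{\pp}[\kappa(\pp):\kk]\cdot\text{length}_{R_\pp}(R_\pp/IR_\pp)$ as $P_{\BB(\GG)}(\bn)$ with coefficients $e_\dd(\BB(\GG))/\dd!$ requires invoking the local mixed-multiplicity machinery (cf.\ \cite{TRUNG_VERMA_SURVEY, huneke2006integral}), and controlling $H^1(\widetilde{I}(D))$ uniformly in $\bn$ demands a multigraded Castelnuovo--Mumford-type regularity bound for the family $\{I_1^{n_1}\cdots I_p^{n_p}\}_\bn$---a delicate point that must be handled with care.
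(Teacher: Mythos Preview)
Your treatment of part (ii) matches the paper's strategy: both pass from the short exact sequence of sheaves to the four-term cohomology sequence and extract the polynomial identity $F_T(\bn)+P_{\BB(\GG)}(\bn)=(\bn\cdot\delta)^r/r!$ by a scaling limit. The difference lies in how the $H^1$ term is handled. You correctly flag this as the delicate point and propose to resolve it by a uniform multigraded regularity bound on the family $\{\mathbf{I}^\bn\}$; this is not how the paper proceeds, and no such bound is established there. Instead, the paper identifies $h^i\big(\PP_\kk^r,(\mathbf{I}^\bn)^\sim(\bn\cdot\delta)\big)=\dim_\kk\big[\HL^{i+1}(\Rees(I_1,\dots,I_p))\big]_{0,\bn}$ and invokes \cite[Theorem 4.4]{DEGREE_SPECIALIZATION}, which gives $\dim\big[\HL^{i}(\Rees(I_1,\dots,I_p))\big]_0\le r+1+p-i$. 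For $i\ge 2$ this forces the Hilbert polynomial of $[\HL^{i}(\Rees)]_0$, as a finitely generated module over $\FF(I_1,\dots,I_p)$, to have degree strictly less than $r$, so these terms vanish after dividing by $n^r$ and passing to the limit. No vanishing of $H^1$ is needed---only a degree drop---and the input is the module-finiteness of local cohomology of the multi-Rees algebra over the special fiber ring, not a regularity statement for the powers themselves.

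For part (i) you take a genuinely different route. The paper does \emph{not} switch to intersection theory; it runs the same cohomological argument without the zero-dimensional hypothesis, obtaining a homogeneous degree-$r$ polynomial $Q(\bn)\ge 0$ with $F_T(\bn)+Q(\bn)=(\bn\cdot\delta)^r/r!$, and deduces the inequality from the nonnegativity of the coefficients of $Q$. Your B\'ezout/excess-intersection argument is precisely the alternative the authors allude to via \cite[Proposition 4.4]{FULTON_INTERSECTION_THEORY}. It buys geometric transparency, but as written it has its own gaps: the interpretation of $\deg(\GG)\deg_\PP^\dd(Y)$ as a count of isolated intersection points of generic linear combinations (via \cite[Theorem 4.7]{MIXED_MULT}) requires $\kk$ to be infinite, whereas \autoref{setup_rat_maps} places no restriction on $\kk$; and the passage from Fulton's refined B\'ezout to the desired inequality still needs a precise identification of the contribution supported off $\BB(\GG)$ with $\deg(\GG)\deg_\PP^\dd(Y)$, which you have not spelled out. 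The paper's approach has the advantage that one computation handles both parts uniformly.
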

	\begin{proof}
		For any $\bn = (n_1,\ldots,n_p) \in \ZZ_+^p$, we have the exact sequence of sheaves
		$$
		0\rightarrow {(I_1^{n_1}\cdots I_p^{n_p})}^\sim(\bn\cdot\delta) \rightarrow \mathcal{O}_{\PP_\kk^r}(\bn\cdot\delta) \rightarrow \frac{\mathcal{O}_{\PP_\kk^r}}{{(I_1^{n_1}\cdots I_p^{n_p})}^\sim}(\bn\cdot\delta) \rightarrow 0,
		$$
		that yields the following exact sequence in cohomology
		$$
		0 \rightarrow \HH^0\big(\PP_\kk^r, (\mathbf{I}^\bn)^\sim(\bn \cdot \delta)\big) \rightarrow \HH^0\big(\PP_\kk^r, \mathcal{O}_{\PP_\kk^r}(\bn\cdot\delta)\big) \rightarrow \HH^0\big(\PP_\kk^r, \frac{\mathcal{O}_{\PP_\kk^r}}{(\mathbf{I}^\bn)^\sim}(\bn\cdot\delta)\big) \rightarrow \HH^1\big(\PP_\kk^r, (\mathbf{I}^\bn)^\sim(\bn \cdot \delta)\big) \rightarrow 0.
		$$
		Notice that $\dim_{\kk}(\HH^0(\PP_\kk^r, \mathcal{O}_{\PP_\kk^r}(\bn\cdot\delta))) = \binom{\bn \cdot \delta + r}{r}$ is a polynomial of degree $r$ in $\QQ[\bn]$.	
		By \cite[Theorem 4.4]{DEGREE_SPECIALIZATION}, we have that 
		$$
		\dim\left(\left[\HL^i(\Rees(I_1,\ldots,I_p))\right]_0\right) \le (\dim(R) + p) - i = r+1+p-i.
		$$
		This implies that for any $i\ge 1$ and $\bn = (n_1,\ldots,n_p) \in \ZZ_+^p$ with $n_i \gg 0$, the expression
		$$
		\dim_{\kk}\big(\HH^i\big(\PP_\kk^r,{(I_1^{n_1}\cdots I_p^{n_p})}^\sim(\bn\cdot\delta)\big)\big)=\dim_{\kk}\big( {\left[\HL^{i+1}(\Rees(I_1,\ldots,I_p))\right]}_{0, \bn} \big)
		$$
		is a polynomial in $\bn$ of degree strictly less than $r$.		
		So, for any $\bn = (n_1,\ldots,n_p) \in \ZZ_+^p$ we have the equalities
		\begin{align*}
			\lim_{n \rightarrow \infty} \frac{h^0\big(\PP_\kk^r, (\mathbf{I}^{n \cdot\bn})^\sim(n\cdot\bn \cdot \delta)\big) - h^1\big(\PP_\kk^r, (\mathbf{I}^{n \cdot\bn})^\sim(n\cdot\bn \cdot \delta)\big)}{n^r} = \lim_{n \rightarrow \infty} \frac{h^0\big(\PP_\kk^r, (\mathbf{I}^{n \cdot\bn})^\sim(n\cdot\bn \cdot \delta)\big)}{n^r} = F_T(\bn),
		\end{align*}
		where 
		$F_T(\bn) = \lim_{n \rightarrow \infty} \frac{\dim_\kk\left(\left[T\right]_{n\cdot \bn}\right)}{n^r}$ is the polynomial function that encodes the mixed multiplicities of the saturated special fiber ring $T = \sfib(I_1,\ldots,I_p)$.
		Since $F_T(\bn)$ is a homogeneous polynomial of degree $r$ in $\QQ[\bn]$, it  follows that the following function 
		$$
		Q(\bn) \,:=\, \lim_{n\rightarrow \infty} \frac{h^0\big(\PP_\kk^r, \,\mathcal{O}_{\PP_\kk^r} / (\mathbf{I}^{n \cdot\bn})^\sim
			(\bn\cdot\delta)\big)}{n^r} \,=\, \lim_{n\rightarrow \infty} \frac{\binom{n \cdot \bn \cdot \delta + r}{r}}{n^r} - F_T(\bn)
		$$
		is also a homogeneous polynomial of degree $r$ in $\QQ[\bn]$. 
		The fact that $Q(\bn) \ge 0$ for all $\bn \in \NN^r$ implies that the coefficients of $Q(\bn)$ are all nonnegative.
		Therefore, the equality $F_T(\bn)  + Q(\bn) \,  = \, \lim_{n\rightarrow \infty} \frac{\binom{n\cdot \bn \cdot \delta + r}{r}}{n^r}$ and \autoref{thm:main_result_sat_special_fiber_ring} yield the general upper bounds $\deg_\PP^\dd(Y)\deg(\GG) \, \le \, \delta_1^{d_1} \cdots \delta_p^{d_p}$. 
		This completes the proof of part (i). 
		
		Next, we assume that the base locus of $\GG$ is zero-dimensional.	
		Since $\dim(\BB(\GG))=0$, we obtain the equalities
		$$
		Q(\bn) \,:=\, \lim_{n\rightarrow \infty} \frac{h^0\big(\PP_\kk^r, \,\mathcal{O}_{\PP_\kk^r} / (\mathbf{I}^{n \cdot\bn})^\sim
			(\bn\cdot\delta)\big)}{n^r} 
		= 
		\lim_{n\rightarrow \infty} \frac{h^0\big(\PP_\kk^r,\, \mathcal{O}_{\PP_\kk^r}/(\mathbf{I}^{n \cdot\bn})^\sim\big)}{n^r}   
		= P_{\BB(\GG)}(\bn).
		$$
		So, combining the above expressions for $Q(\bn)$, we obtain the following equality of polynomials 
		$$
		F_T(\bn)  + P_{\BB(\GG)}(\bn) \,  = \, \lim_{n\rightarrow \infty} \frac{\binom{n\cdot \bn \cdot \delta + r}{r}}{n^r}.
		$$
		And so, the result of part (ii) follows from \autoref{thm:main_result_sat_special_fiber_ring}.
		This completes the proof of the theorem.
	\end{proof}
	
	As a direct consequence of the above theorem, we obtain the following result.
	\begin{corollary}
		Assume \autoref{setup_rat_maps} and suppose that all the ideals $I_i$ are $\mm$-primary {\rm(}i.e.~the rational map $\GG$ is actually a morphism $\GG : \PP_\kk^r \rightarrow \PP_\kk^{m_1}  \times_\kk \cdots \times_\kk \PP_\kk^{m_p}${\rm)}. 
		Then, for all $\dd = (d_1,\ldots,d_p) \in \NN^p$ with $\lvert \dd \rvert = r$, we have the equality 
		$$
		\deg_\PP^\dd(Y)\deg(\GG) \, = \, \delta_1^{d_1} \cdots \delta_p^{d_p}.
		$$
	\end{corollary}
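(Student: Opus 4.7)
The plan is to view this as an immediate corollary of \autoref{thm:zero_base_locus_degree_formula}(ii), once we check that the base locus is empty and that $\GG$ is generically finite. Concretely, if each $I_i$ is $\mm$-primary, then $\sqrt{I_i} = \mm$ in $R$, so $V(I_i) = \emptyset$ in $\PP_\kk^r$. Because the base locus is defined by the product ideal,
\[
\BB(\GG) \,=\, V(I_1 \cdots I_p) \,=\, V(I_1)\cup\cdots\cup V(I_p) \,=\, \emptyset,
\]
which in particular implies $\dim(\BB(\GG)) \le 0$, so we are in the setting of part (ii) of the theorem.

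Next I would verify generic finiteness of $\GG$, which is an assumption of \autoref{thm:zero_base_locus_degree_formula}. Since each $I_i$ is $\mm$-primary, the product $I_1\cdots I_p$ is $\mm$-primary as well, hence of height $r+1$. By the standard bound $\ell(I_1\cdots I_p) \ge \HT(I_1\cdots I_p)$ combined with $\ell(I_1\cdots I_p)\le \dim(R) = r+1$, we obtain $\ell(I_1\cdots I_p) = r+1$. Using the identity $\dim(Y) = \ell(I_1\cdots I_p) - 1$ recalled before \autoref{thm:main_result_sat_special_fiber_ring}, this gives $\dim(Y) = r$, so $\GG$ is indeed generically finite.

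Finally, since $\BB(\GG) = \emptyset$, the defining sum
\[
e_\dd(\BB(\GG)) \,=\, \sum_{\pp \in \BB(\GG)} [\kappa(\pp):\kk] \cdot e_\dd(R_\pp; I_1,\ldots, I_p)
\]
is an empty sum and therefore equals zero for every $\dd$ with $|\dd|=r$. Plugging $e_\dd(\BB(\GG))=0$ into \autoref{thm:zero_base_locus_degree_formula}(ii) yields
\[
\delta_1^{d_1}\cdots \delta_p^{d_p} \,=\, \deg_\PP^\dd(Y)\deg(\GG),
\]
as desired. There is no real obstacle here: the entire content sits in observing that the $\mm$-primary hypothesis forces both $\BB(\GG) = \emptyset$ and $\ell(I_1 \cdots I_p) = r+1$, after which the theorem supplies the equality on the nose.
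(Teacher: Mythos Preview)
Your proof is correct and follows exactly the approach the paper intends: the paper simply states the corollary as ``a direct consequence of the above theorem'' without further argument, and your derivation from \autoref{thm:zero_base_locus_degree_formula}(ii) with $\BB(\GG)=\emptyset$ is precisely that. The one place where you add genuine content beyond the paper is the explicit verification of generic finiteness via $\HT(I_1\cdots I_p)\le \ell(I_1\cdots I_p)\le \dim(R)$, which the paper leaves implicit; this is a welcome clarification since the corollary does not list generic finiteness among its hypotheses.
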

	
	\section{Linear rational maps}\label{sec:linear_rational_maps}
	
	In this section, we revisit the case of linear multiview varieties that has been extensively studied. Here, we use the following setup. 
	
	\begin{setup}
		\label{setup_linear}
		Assume
		\autoref{setup_rat_maps} and let $\delta_i = 1$ for each $i$. That is, the forms $f_{i,j}$ defining the rational map $\GG : \PP_\kk^r \dashrightarrow \PP_\kk^{m_1}  \times_\kk \cdots \times_\kk \PP_\kk^{m_p}$ are linear. In addition, we will assume that the field is algebraically closed.
	\end{setup}  
	
	We start by recalling that, in this setting, the mixed multiplicities of the special fiber ring are either zero or one.
	
	\begin{theorem}[{\cite[Theorem~1.1]{li2013images} and  \cite[Theorem~3.9]{conca2019resolution}}]\label{thm:multiplicity_free_linear_case}
		Assume \autoref{setup_linear} and suppose that the map $\GG : \PP_\kk^r \dashrightarrow \PP_\kk^{m_1}  \times_\kk \cdots \times_\kk \PP_\kk^{m_p}$ is generically finite. 
		Then, the closure $Y$ of the image of $\GG$ is multiplicity-free. 
		That is,  $\deg_\PP^\dd(Y) \in \{0, 1\}$ for all $\dd \in \NN^p$ with $\lvert \dd \rvert = r$. 
	\end{theorem}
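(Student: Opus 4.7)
The plan is to derive this statement as an immediate corollary of \autoref{thm:zero_base_locus_degree_formula}(i), which is the main degree-theoretic tool already established. Since by hypothesis all the defining forms $f_{i,j}$ are linear, we have $\delta_i = 1$ for every $1 \le i \le p$. Plugging this into the general upper bound
$$
\deg_\PP^\dd(Y) \cdot \deg(\GG) \; \le \; \delta_1^{d_1} \cdots \delta_p^{d_p}
$$
immediately gives $\deg_\PP^\dd(Y) \cdot \deg(\GG) \le 1$.

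Next, I would observe that both factors on the left-hand side are nonnegative integers: $\deg_\PP^\dd(Y)$ is a mixed multiplicity of the standard $\NN^p$-graded algebra $\FF(I_1,\ldots,I_p)$, and $\deg(\GG) = [K(\PP_\kk^r):K(Y)]$ is a positive integer under the assumption that $\GG$ is generically finite (recall that the convention $\deg(\GG) = 0$ is only used when the field extension has infinite degree, which does not happen here). In particular $\deg(\GG) \ge 1$, so the inequality $\deg_\PP^\dd(Y) \cdot \deg(\GG) \le 1$ forces $\deg_\PP^\dd(Y) \in \{0,1\}$, which is exactly the multiplicity-free conclusion.

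There is essentially no obstacle in this approach: all the work is absorbed into \autoref{thm:zero_base_locus_degree_formula}(i). The only thing to take care of in the write-up is to justify carefully that $\deg(\GG) \ge 1$ under the generic finiteness hypothesis so that the product inequality translates into a bound on $\deg_\PP^\dd(Y)$ alone. Compared with the earlier proofs in \cite{li2013images} and \cite{conca2019resolution}, which exploit either the Cartwright--Sturmfels property or a direct combinatorial/polymatroidal analysis, our proof is uniform and entirely numerical, relying only on the interaction between the saturated special fiber ring and the base-degree bounds encoded in \autoref{thm:zero_base_locus_degree_formula}.
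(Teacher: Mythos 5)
Your proposal is correct and follows essentially the same route as the paper, whose proof is precisely the one-line deduction from \autoref{thm:zero_base_locus_degree_formula}(i) with $\delta_1=\cdots=\delta_p=1$. Your additional remark that $\deg(\GG)\ge 1$ under generic finiteness, so the bound $\deg_\PP^\dd(Y)\deg(\GG)\le 1$ forces $\deg_\PP^\dd(Y)\in\{0,1\}$, is exactly the implicit step in the paper's argument, spelled out.
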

	\begin{proof}
		This result follows directly from \autoref{thm:zero_base_locus_degree_formula}(i).
	\end{proof}
	
	Our next observation is the following interesting corollary of \autoref{thm:main_result_sat_special_fiber_ring}.
	
	\begin{corollary}
		\label{cor_eq_sat_fib_fib}
		Assume \autoref{setup_linear} and suppose that $\GG : \PP_\kk^r \dashrightarrow \PP_\kk^{m_1}  \times_\kk \cdots \times_\kk \PP_\kk^{m_p}$ is generically finite. 
		Then, the saturated special fiber ring and the special fiber ring coincide. 
		Explicitly, for each $(n_1, \dots, n_p) \in \NN^p$, we have
		\[
		\left[ \left( 
		I_1^{n_1} \cdots I_p^{n_p} :_R \mm^{\infty}
		\right) \right]_{n_1 + n_2 + \dots + n_p}
		=
		\left[ \left( 
		I_1^{n_1} \cdots I_p^{n_p}
		\right) \right]_{n_1 + n_2 + \dots + n_p}.
		\]
	\end{corollary}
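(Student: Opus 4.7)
The first move will be to rephrase the claim as the vanishing of a single local cohomology group in one prescribed degree. Since $\delta_i = 1$ for every $i$, the $\bn$-th graded piece of $\sfib(I_1,\ldots,I_p)$ is $[(\mathbf{I}^\bn :_R \mm^\infty)]_{|\bn|}$ and that of $\FF(I_1,\ldots,I_p)$ is $[\mathbf{I}^\bn]_{|\bn|}$, where $\mathbf{I}^\bn := I_1^{n_1}\cdots I_p^{n_p}$. Their quotient is exactly $[\HL^0(R/\mathbf{I}^\bn)]_{|\bn|}$, so the corollary amounts to showing that $[\HL^0(R/\mathbf{I}^\bn)]_{|\bn|} = 0$ for every $\bn \in \NN^p$.

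Next, I will feed the short exact sequence $0 \to \mathbf{I}^\bn \to R \to R/\mathbf{I}^\bn \to 0$ into the long exact sequence of local cohomology. Because $R$ is a polynomial ring, $\HL^i(R) = 0$ for $i \le r$; assuming $r \ge 1$ (the case $r = 0$ being trivial), this yields a canonical isomorphism $\HL^0(R/\mathbf{I}^\bn) \cong \HL^1(\mathbf{I}^\bn)$, reducing the question to whether $[\HL^1(\mathbf{I}^\bn)]_{|\bn|}$ vanishes. For this I would invoke the classical theorem of Conca-Herzog: a product of ideals generated by linear forms has a linear free resolution over $R$, so in particular $\reg(\mathbf{I}^\bn) = |\bn|$. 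The usual cohomological characterisation of Castelnuovo-Mumford regularity then forces $[\HL^i(\mathbf{I}^\bn)]_j = 0$ for all $i + j > |\bn|$, and taking $(i, j) = (1, |\bn|)$ delivers the desired vanishing.

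The main obstacle lies in invoking the linear-resolution theorem in the characteristic-free setting of \autoref{setup_linear}. While the original Conca-Herzog result is stated in characteristic zero, the hypothesis that each $I_i$ is generated by linear forms permits a characteristic-free upgrade via Gr\"obner degeneration of the linear forms to monomials (whose products have well-understood linear resolutions), or alternatively via the Cartwright-Sturmfels framework referenced in the introduction. As an independent backup route, one can first combine \autoref{thm:zero_base_locus_degree_formula}(i) and \autoref{thm:multiplicity_free_linear_case} to deduce $\deg(\GG) = 1$ (since generic finiteness forces $\dim(Y) = r$, so some top multidegree $\deg_\PP^\dd(Y)$ is nonzero and therefore equal to $1$), making $\GG$ birational; \autoref{thm:main_result_sat_special_fiber_ring}(ii) then closes the argument once $\FF(I_1,\ldots,I_p)$ is shown to be integrally closed, a property classically established for multiview coordinate rings.
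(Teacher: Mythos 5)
Your main route is correct, and it is genuinely different from the paper's. The paper does not touch local cohomology of the ideals $\mathbf{I}^\bn$ at all: it first notes (via \autoref{thm:multiplicity_free_linear_case}) that $\FF(I_1,\ldots,I_p)$ is multiplicity-free, invokes Brion's theorem to conclude that $\FF(I_1,\ldots,I_p)$ is integrally closed, observes that a generically finite map defined by linear forms is birational, and then applies the criterion \autoref{thm:main_result_sat_special_fiber_ring}(ii) to get $\sfib(I_1,\ldots,I_p)=\FF(I_1,\ldots,I_p)$. Your argument instead identifies the cokernel of $\FF\subseteq\sfib$ with $\bigoplus_\bn\big[\HL^0(R/\mathbf{I}^\bn)\big]_{|\bn|}\cong\bigoplus_\bn\big[\HL^1(\mathbf{I}^\bn)\big]_{|\bn|}$ and kills it by the Conca--Herzog theorem that a product of ideals of linear forms has regularity $|\bn|$. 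This buys more: your proof needs neither generic finiteness, nor birationality, nor $\kk$ algebraically closed, nor Brion's theorem, so it establishes the displayed equality of graded pieces unconditionally; the paper's route, by contrast, stays inside the machinery it has just built (the saturated special fiber ring criterion plus multiplicity-freeness) and showcases exactly how that criterion is meant to be used, at the price of the extra hypotheses. Two small caveats on your side. First, the Conca--Herzog result is in fact characteristic-free as stated, so the hedging is unnecessary; and the Gr\"obner-degeneration fallback as sketched is not sound on its own, since the initial ideal of $I_1^{n_1}\cdots I_p^{n_p}$ is not the product of the initial ideals and regularity only satisfies $\reg(I)\le\reg(\iniTerm(I))$ --- if you want a backup, the Cartwright--Sturmfels/Conca--De Negri--Gorla route you mention is the right citation. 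Second, your alternative route is essentially the paper's proof, except that you leave the integral closedness of $\FF(I_1,\ldots,I_p)$ as ``classically established''; in the paper this is precisely the nontrivial point, supplied by Brion's multiplicity-free theorem, so as written that branch has a gap (your derivation of $\deg(\GG)=1$ from \autoref{thm:zero_base_locus_degree_formula}(i), on the other hand, is fine and if anything more self-contained than the paper's one-line appeal to linearity).
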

	
	\begin{proof}
		By \autoref{thm:multiplicity_free_linear_case} we have that $\FF(I_1, \dots, I_p)$ is multiplicity-free. 
		Hence, by a theorem of Brion, see \cite[Theorem~1.11]{conca2018cartwright} and \cite[Theorem~2]{brion2002multiplicity}, the special fiber ring $\FF(I_1, \dots, I_p)$ is integrally closed. We may therefore apply \autoref{thm:main_result_sat_special_fiber_ring}(ii) to deduce that $\FF(I_1, \dots, I_p) = \sfib(I_1, \dots, I_p)$ if and only if $\GG$ is birational. By assumption, all forms $f_{i,j}$ defining $\GG$ are linear and so the $\GG$ is birational.
	\end{proof}

	We now restrict our attention to the case where the base locus of $\GG$ is zero-dimensional. For each $1 \le i \le p$, the rational map $\GG_i : \PP_\kk^r \dashrightarrow \PP_\kk^{m_i}$ is then given by 
	$$
	(x_0, \dots, x_r) \mapsto A_i (x_0, \dots, x_r)^T
	\quad\text{for some matrix } A_i \in \kk^{(m_i + 1) \times (r+1)}.
	$$
	Under the assumption that $\dim(\BB(\GG)) =0$, we set-theoretically obtain $\BB(\GG) = \{ \Ker(A_i) \mid 1 \le i \le p \}$ and we have the equivalent conditions  $\dim(R/I_i) \le 1$ and $\rank(A_i) \ge r$.
	We note that each ideal $I_i$ is linear, hence prime. 
	
	\begin{remark}
		Suppose that $r = 3$ and $m_i = 2$ for all $i$. The assumption that the base locus is zero-dimensional is equivalent to the defining matrices $A_i$ of the rational map $\GG_i$
		be of rank $3$. 
		In particular, this is the case studied in \cite{agarwal2019ideals} which we generalize in \autoref{cor:dim_zero_base_linear}.
	\end{remark}

	By applying \autoref{thm:zero_base_locus_degree_formula}, we can give an explicit description of the degrees $\dd$, where $|\dd| = r$, such that $\deg_\PP^\dd(Y) = 1$.

	\begin{corollary}\label{cor:dim_zero_base_linear}
		Assume \autoref{setup_linear} and suppose that $\dim(\BB(\GG))=0$.
		Let $\dd = (d_1, \dots, d_p) \in \NN^p$ with $|\dd| = r$.
		Then $\deg_\PP^{\dd}(Y) = 1$ if and only if for each point $\pp \in \BB(\GG)$ there exists $1 \le i \le p$ such that $I_i \neq \pp$ and $d_i \ge 1$.
	\end{corollary}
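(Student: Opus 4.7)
The plan is to combine Theorem 3.4(ii) with a careful local analysis of the mixed multiplicities at each base point. Since $\delta_i = 1$ for every $i$, the product $\delta_1^{d_1}\cdots\delta_p^{d_p}$ appearing in \autoref{thm:zero_base_locus_degree_formula}(ii) is simply $1$. As the forms $f_{i,j}$ defining $\GG$ are linear, the argument of \autoref{cor_eq_sat_fib_fib} gives $\deg(\GG) = 1$. Substituting these into \autoref{thm:zero_base_locus_degree_formula}(ii) yields
\[
1 \;=\; \deg_{\PP}^{\dd}(Y) \;+\; e_{\dd}(\BB(\GG)),
\]
so, since both terms are nonnegative integers, $\deg_{\PP}^{\dd}(Y) = 1$ if and only if $e_{\dd}(\BB(\GG)) = 0$. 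The proof thus reduces to characterizing the vanishing of $e_\dd(\BB(\GG))$.

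Next, I would unpack the definition of $e_\dd(\BB(\GG))$. Since $\kk$ is algebraically closed, every residue field $\kappa(\pp)$ equals $\kk$, and the formula for $e_\dd(\BB(\GG))$ becomes a sum of the nonnegative local contributions $e_{\dd}(R_\pp; I_1, \ldots, I_p)$ over $\pp \in \BB(\GG)$. This sum vanishes precisely when each summand does, so it suffices to identify when $e_{\dd}(R_\pp; I_1, \ldots, I_p) = 0$ for a fixed base point $\pp$. Here is the key structural observation: since each $I_i$ is either the irrelevant ideal $\mm$ or a linear prime of height $r$ cutting out a single point (using $\dim(\BB(\GG)) = 0$), and since $\pp$ is itself a linear prime of height $r$, the ideal $I_i R_\pp$ is either the whole ring $R_\pp$ (exactly when $I_i \ne \pp$) or $\pp R_\pp$ (exactly when $I_i = \pp$). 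This dichotomy is the heart of the argument.

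With this in hand, the remaining step is a positivity/vanishing criterion for the mixed multiplicity. If there exists $i$ with $d_i \ge 1$ and $I_i R_\pp = R_\pp$, then $I_i^{n_i} R_\pp = R_\pp$ for all $n_i$, so the length polynomial $P_{R_\pp; I_1, \ldots, I_p}(n_1, \ldots, n_p)$ does not depend on $n_i$, forcing $e_{\dd}(R_\pp; I_1, \ldots, I_p) = 0$. Conversely, if every $i$ with $d_i \ge 1$ satisfies $I_i R_\pp = \pp R_\pp$, then — after discarding indices with $d_i = 0$, which do not affect the polynomial — we are computing a mixed multiplicity of copies of the maximal ideal in the regular local ring $R_\pp$ of dimension $r$, which equals $e(\pp R_\pp) = 1$. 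Thus $e_{\dd}(R_\pp; I_1, \ldots, I_p) = 0$ if and only if some $i$ has $d_i \ge 1$ and $I_i \ne \pp$, and summing over $\pp \in \BB(\GG)$ gives exactly the stated criterion. The main obstacle I expect is cleanly justifying the positivity/vanishing dichotomy for the mixed multiplicity; once the dichotomy $I_iR_\pp \in \{R_\pp,\pp R_\pp\}$ is established, the rest is bookkeeping.
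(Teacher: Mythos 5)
Your proposal is correct and takes essentially the same approach as the paper: specialize \autoref{thm:zero_base_locus_degree_formula}(ii) with $\delta_i=1$, then evaluate the local mixed multiplicities $e_\dd(R_\pp;I_1,\dots,I_p)$ at each base point using the dichotomy $I_iR_\pp\in\{R_\pp,\pp R_\pp\}$. The only cosmetic differences are that you make $\deg(\GG)=1$ explicit (the paper leaves $\deg(\GG)$ in the formula) and you argue the vanishing/positivity of the local multiplicities by hand where the paper cites \cite[\S 17.4]{huneke2006integral}.
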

	
	\begin{proof}
		Fix a point $\pp \in \BB(\GG)$ in the base locus. We write $I_{i_1} = I_{i_2} = \dots = I_{i_t} = \pp$ for all of the ideals which correspond to the same point in the base locus.  We proceed by calculating the value of $e_{\dd}(R_{\pp} ; I_1, \dots, I_p)$. Note that for each $1 \le j \le p$ we have
		$$
		I_j R_\pp = \left\{ 
		\begin{array}{ll}
			\pp R_\pp & \text{if } j \in \{i_1, i_2, \dots, i_t\}, \\
			R_p & \text{otherwise}.
		\end{array}
		\right.
		$$
		So, it follows that
		$$
		e_{\dd}(R_{\pp} ; I_1, \dots, I_p) = \begin{cases}
			e(\pp R_\pp) = 1 \quad \text{if } d_{i_1}+d_{i_2}+\cdots+d_{i_t} = r  \\
			0 \quad\qquad\qquad \text{otherwise},
		\end{cases}
		$$
		for more details see \cite[\S 17.4]{huneke2006integral}.
		
		By \autoref{thm:zero_base_locus_degree_formula}, we have
		$
		1 = \deg_\PP^{\dd}(Y)\deg(\GG) + e_{\dd}(\BB(\GG)).
		$
		The mixed multiplicities of the base locus are given by $\sum [\kappa(\pp) : \kk ] e_{\dd}(R_\pp; I_1, \dots, I_p)$ where the sum is taken over the set of primes $\pp \in \BB(\GG)$.
		Since, $\kk$ is algebraically closed, for all $\pp \in \BB(\GG)$, we have $[\kappa(\pp) : \kk]=1$. 
		Finally, it follows that $\deg_\PP^\dd(Y) = 1$ if and only if for each point $\pp \in \BB(\GG)$ there exists some $1 \le i \le p$ such that $I_i \neq \pp$ and  $d_i \ge 1$.
	\end{proof}

	\section{Certain special families of rational maps}\label{sec:perfect_ht2_gorenstein_ht3}
	
	In this section, we consider some special families of rational maps for which we give exact formulas for the product of the degree of the rational map with the multidegrees of its image. 
	We shall extend the computations obtained in \cite{MULT_SAT_PERF_HT_2, SAT_FIB_GOR_3} for perfect ideals of height two and Gorenstein ideals of height three. 
	Throughout this section we continue using \autoref{setup_rat_maps}.

	We begin by studying the multidegrees of the graph $\Gamma \subset \PP_\kk^r \times_\kk \PP_\kk^{m_1}  \times_\kk \cdots \times_\kk \PP_\kk^{m_p}$ of the rational map $\GG : \PP_\kk^r \dashrightarrow \PP_\kk^{m_1}  \times_\kk \cdots \times_\kk \PP_\kk^{m_p}$.
	To simplify notation, let 
	$$
	\DD := \PP_\kk^r \times_\kk \PP_\kk^{m_1}  \times_\kk \cdots \times_\kk \PP_\kk^{m_p} = \PP_\kk^r \times_\kk \PP.
	$$
	Then, for any $(d_0, \dd)\in \NN^{p+1}$ with $d_0 + |\dd| = r$, we shall consider the multidegrees $\deg_\DD^{(d_0,\dd)}(\Gamma) = e\big((d_0,\dd),\, \Rees(I_1,\ldots,I_p)\big)$.
	We proceed by relating the multidegrees of the graph $\Gamma$ with the mixed multiplicities of $\sfib(I_1,\ldots,I_p)$. The following result provides an extension of \cite[Theorem 5.4]{MIXED_MULT} to our setting.
	
	\begin{theorem}
		\label{thm_deg_graph_sat_fib}
		Assume \autoref{setup_rat_maps}.
		Then, for all $\dd = (d_1,\ldots,d_p) \in \NN^{p}$ with $|\dd| = r$, we have 
		$$
		\deg_\DD^{(0,\dd)}(\Gamma) \, = \, e\big(\dd, \, \sfib(I_1,\ldots,I_p)\big).
		$$
	\end{theorem}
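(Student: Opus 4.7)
The plan is to bridge $\Rees(I_1,\ldots,I_p)$ and $\sfib(I_1,\ldots,I_p)$ via the $\mm$-saturated Rees module
$$
\overline{\Rees} \;:=\; \bigoplus_{c \ge 0,\, \bn \in \NN^p} \left[I_1^{n_1}\cdots I_p^{n_p} :_R \mm^\infty\right]_{c + \bn\cdot\delta},
$$
which is an $\NN^{p+1}$-graded $\AAA$-module containing $\Rees(I_1,\ldots,I_p)$ and whose ``$c=0$ strand'' is by definition $[\overline{\Rees}]_0 = \sfib(I_1,\ldots,I_p)$ as $\NN^p$-graded $\kk$-algebras. The cokernel $C := \overline{\Rees}/\Rees(I_1,\ldots,I_p)$ is $\mm$-torsion as an $R$-module, with homogeneous components $[C]_{c,\bn} = [\HL^0(R/I_1^{n_1}\cdots I_p^{n_p})]_{c+\bn\cdot\delta}$; since $R$ has depth $r+1$, the long exact sequence in local cohomology identifies $C$ as an $\NN^{p+1}$-graded submodule of $\HL^1(\Rees(I_1,\ldots,I_p))$. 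The strategy is then to show that the chain of equalities $e((0,\dd),\Rees(I_1,\ldots,I_p)) = e((0,\dd), \overline{\Rees}) = e(\dd, \sfib(I_1,\ldots,I_p))$ holds for every $\dd \in \NN^p$ with $|\dd| = r$.

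The first step establishes that $e((0,\dd),\Rees(I_1,\ldots,I_p)) = e((0,\dd), \overline{\Rees})$. By additivity of mixed multiplicities on the short exact sequence $0 \to \Rees(I_1,\ldots,I_p) \to \overline{\Rees} \to C \to 0$, this reduces to $e((0,\dd), C) = 0$ for $|\dd| = r$. The cohomological bound $\dim[\HL^1(\Rees(I_1,\ldots,I_p))]_0 \le r + p$ from \cite[Theorem 4.4]{DEGREE_SPECIALIZATION}, together with the $\mm$-torsion nature of $C$ in the $R$-direction (each fixed-$\bn$ strand $[C]_{c,\bn}$ vanishes for $c$ sufficiently large), forces the relevant top-degree mixed multiplicities of $C$ in the direction $(0,\dd)$ with $|\dd| = r$ to vanish.

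The crux is the identification $e((0,\dd), \overline{\Rees}) = e(\dd, \sfib(I_1,\ldots,I_p))$. Since $\overline{\Rees}$ is $\mm$-saturated by construction, its multigraded Hilbert function agrees with its multigraded Hilbert polynomial on a range that includes the $c = 0$ axis for $\bn$ sufficiently large; combined with the identification $[\overline{\Rees}]_{0,\bn} = [\sfib(I_1,\ldots,I_p)]_\bn$, this yields the equality of polynomials $P_{\overline{\Rees}}(0, \bn) = P_{\sfib(I_1,\ldots,I_p)}(\bn)$, and comparing top-degree coefficients of $\ttt^\dd$ with $|\dd| = r$ gives the claim. The main obstacle is precisely this boundary compatibility at $c = 0$: a priori a multigraded Hilbert polynomial coincides with its Hilbert function only for strictly positive multidegrees, so the $\mm$-saturation of $\overline{\Rees}$ must be leveraged to extend this coincidence to the $c = 0$ stratum. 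This is essentially a multigraded analog of the classical fact that the Hilbert function of an $\mm$-saturated module equals its Hilbert polynomial beyond the Castelnuovo–Mumford regularity, and making it rigorous in the $\NN^{p+1}$-graded setting is the technical heart of the proof.
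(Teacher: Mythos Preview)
Your route through the $\mm$-saturated Rees module $\overline{\Rees}$ is natural, and Step~A is essentially fine once you grant finite generation of $\overline{\Rees}$ over $\Rees$ (which you do not address): if $C$ is a finitely generated $\mm$-torsion $\AAA$-module, then $\mm^N C = 0$ for some $N$, whence $[C]_{c,\bn} = 0$ for $c \gg 0$ uniformly in $\bn$, so $P_C \equiv 0$ and $e((0,\dd),\Rees) = e((0,\dd),\overline{\Rees})$. Note that the dimension bound from \cite[Theorem~4.4]{DEGREE_SPECIALIZATION} that you invoke here is not actually what drives this step.

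The genuine gap is Step~B. You assert that the $\mm$-saturation of $\overline{\Rees}$ forces its Hilbert polynomial and Hilbert function to agree on the stratum $c = 0$ for $\bn \gg \mathbf{0}$, appealing to a ``classical fact'' about saturated modules. No such fact exists: the vanishing $\HL^0(\overline{\Rees}) = 0$ alone says nothing about whether $P_{\overline{\Rees}}(0,\bn) = H_{\overline{\Rees}}(0,\bn)$. For a counterexample in the bigraded setting, take $\AAA = \kk[x, y_0,\ldots,y_s]$ with $\deg x = (1,0)$, $\deg y_j = (0,1)$, and $M = x\AAA$: then $\HL^0(M) = 0$, yet $[M]_0 = 0$ while $P_M(c,n) = \binom{n+s}{s}$, so $e\big((0,s), M\big) = 1 \neq 0 = e\big(s, [M]_0\big)$. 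What is genuinely required to pass to $c=0$ is control over the \emph{higher} local cohomologies $[\HL^i(\overline{\Rees})]_{0,\bn}$ for $i \ge 1$ (equivalently $[\HL^i(\Rees)]_{0,\bn}$ for $i \ge 2$, since $C$ is $\mm$-torsion), showing these contribute $o(|\bn|^r)$. This is precisely what the paper's proof does, working directly with $\Rees$: it uses the Euler-characteristic identity $P_\Rees(0,\bn) = \sum_{i \ge 0} (-1)^i h^i(\Gamma, \OO_\Gamma(0,\bn))$, identifies $h^0(\Gamma,\OO_\Gamma(0,\bn)) = \dim_\kk[\sfib]_\bn$ for $\bn \gg \mathbf{0}$ via a Mayer--Vietoris comparison of $\HH_\mm$ and $\HH_\nn$, and kills the higher terms using $\dim[\HL^i(\Rees)]_0 \le r+1+p-i$ from \cite[Theorem~4.4]{DEGREE_SPECIALIZATION}. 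Your outline places this bound in Step~A, where it is superfluous, and omits it from Step~B, where it is the whole point.
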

	\begin{proof}
		This proof follows similarly to \cite[Theorem 5.4]{MIXED_MULT}.
		Let $\bb = (\yy_1) \cap \cdots\cap  (\yy_p)$, $\nn = \mm \cap \bb$ and $\mathfrak{M} = \mm + \nn$. Note that $\nn$ is the irrelevant ideal in $\DD = \multProj(\AAA)$.
		From the Mayer-Vietoris exact sequence 
		$$
		\HH_{\mathfrak{M}}^i(\Rees(I_1,\ldots,I_p)) \,\rightarrow\, \HH_\mm^i(\Rees(I_1,\ldots,I_p)) \oplus \HH_\mathfrak{b}^i(\Rees(I_1,\ldots,I_p)) \,\rightarrow\, \HH_\nn^i(\Rees(I_1,\ldots,I_p)) \,\rightarrow\, \HH_\mathfrak{M}^{i+1}(\Rees(I_1,\ldots,I_p))
		$$
		and the fact that $\left[\HH_{\mathfrak{M}}^i(\Rees(I_1,\ldots,I_p))\right]_{0,\bn} = \left[\HH_{\bb}^i(\Rees(I_1,\ldots,I_p))\right]_{0,\bn} = 0$ for all $\bn \gg \mathbf{0}$, we obtain the isomorphism 
		\[
		\left[\HH_{\mm}^i(\Rees(I_1,\ldots,I_p))\right]_{0,\bn} \cong \left[\HH_{\nn}^i(\Rees(I_1,\ldots,I_p))\right]_{0,\bn} \quad \text{for all } \bn \gg \mathbf{0}.
		\]
		
		Let $X=\Proj_{R\text{-gr}}\left(\Rees(I_1,\ldots,I_p)\right)$ be the projective scheme obtained by considering $\Rees(I)$ as a single-graded $\kk$-algebra with the grading of $R$. 
		By using the relations between sheaf and local cohomologies, see e.g.~\cite[Corollary 1.5]{HYRY_MULTIGRAD} and \cite[Appendix A4.1]{EISEN_COMM}, we have that 
		$$
		\left[\sfib(I_1,\ldots,I_p)\right]_\bn \;\cong\; \left[\HH^0(X, \OO_X)\right]_\bn \;\cong\;  \HH^0(\Gamma, \OO_\Gamma(0,\bn)) \quad \text{for all } \bn \gg \mathbf{0}.
		$$
		The multigraded Hilbert polynomial of $\Rees(I_1,\ldots,I_p)$ is given by 
		$$
		P_{\Rees(I_1,\ldots,I_p)}(c,\bn) \,=\, \sum_{i\ge0} {(-1)}^i \dim_\kk\Big(\HH^i\left(\Gamma,\OO_\Gamma(c,\bn)\right)\Big),
		$$
		see e.g. \cite[Lemma 4.3]{Kleiman_geom_mult}.
		By \cite[Theorem 4.4]{DEGREE_SPECIALIZATION}, we have that $\left[\HH_\mm^i(\Rees(I_1,\ldots,I_p))\right]_0$ is a finitely generated $\NN^p$-graded module over $\fib(I_1,\ldots,I_p)$ such that  
		$$
		\dim\left(\left[\HH_\mm^i(\Rees(I_1,\ldots,I_p))\right]_0\right) \, \le \, (\dim(R) + p) - i = r+1+p-i.
		$$
		Hence, the fact that $\HH^i\left(\Gamma,\OO_\Gamma(0,\bn)\right) \cong \left[\HH_{\nn}^{i+1}(\Rees(I_1,\ldots,I_p))\right]_{0,\bn}\cong \left[\HH_{\mm}^{i+1}(\Rees(I_1,\ldots,I_p))\right]_{0,\bn}$ for all $i \ge 1$ and $\bn \gg \mathbf{0}$ gives the equality 
		$$
		\lim_{n \rightarrow \infty} \frac{P_{\Rees(I_1,\ldots,I_p)}(0,n\cdot\bn)}{n^r} \; = \; \lim_{n \rightarrow \infty} \frac{\dim_{\kk}\left(
			\left[\sfib(I_1,\ldots,I_p)\right]_{n \cdot \bn}
			\right)}{n^r}
		$$ 
		for all $\bn \in \ZZ_+^p$.
		So, the equality $
		\deg_\DD^{(0,\dd)}(\Gamma) \, = \, e\big(\dd, \, \sfib(I_1,\ldots,I_p)\big)
		$
		holds for all $\dd \in \NN^p$ with $|\bn| = r$.
	\end{proof}

	We also relate the multidegrees of the graph $\Gamma$ with the mixed multiplicities of the ideals $I_1,\ldots,I_p$ with respect to $\mm$.
	
	\begin{lemma}
		\label{lem_multdeg_mixed_mult}
		Assume \autoref{setup_rat_maps}. Then, for all $(d_0,\dd) 
		\in \NN^{p+1}$ with $d_0 + |\dd| = r$, we have 
		$$
		\deg_\DD^{(d_0,\dd)}(\Gamma) \, = \, e_{(d_0,\dd)}\left( \mm \mid I_1, \ldots, I_p \right).
		$$		
	\end{lemma}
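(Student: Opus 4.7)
The plan is to reduce the lemma to the statement that $\Rees(I_1,\ldots,I_p)$ and $T(\mm\mid I_1,\ldots,I_p)$ have the same multigraded Hilbert function, from which the equality of their top-degree coefficients would follow immediately. First, I would unwind both definitions: \autoref{def_multdeg} applied to $\Gamma = \multProj(\Rees(I_1,\ldots,I_p)) \hookrightarrow \DD$ identifies $\deg_\DD^{(d_0,\dd)}(\Gamma)$ with the coefficient $e\bigl((d_0,\dd),\Rees(I_1,\ldots,I_p)\bigr)$ in the binomial expansion of the multigraded Hilbert polynomial $P_{\Rees(I_1,\ldots,I_p)}(t_0,\ttt)$. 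By construction, $e_{(d_0,\dd)}(\mm\mid I_1,\ldots,I_p)$ is the analogous coefficient of $P_{T(\mm\mid I_1,\ldots,I_p)}(t_0,\ttt)$. Matching these coefficients will therefore reduce the lemma to equality of Hilbert functions.

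The core computation I would carry out is to prove the identity
\[
\dim_{\kk}[\Rees(I_1,\ldots,I_p)]_{c,\bn} \;=\; \dim_{\kk}\bigl(\mm^{c}\mathbf{I}^{\bn}/\mm^{c+1}\mathbf{I}^{\bn}\bigr)
\]
for every $(c,\bn)\in\NN^{p+1}$. The left-hand side equals $\dim_{\kk}[\mathbf{I}^{\bn}]_{c+\bn\cdot\delta}$ by the explicit presentation of the $\NN^{p+1}$-grading on $\Rees(I_1,\ldots,I_p)$ recorded earlier in this section. On the right, the key point is that since each $I_i$ is generated in the single degree $\delta_i$, the product $\mathbf{I}^{\bn}$ is generated in degree $\bn\cdot\delta$, whence a short check gives $\mm^{c}\mathbf{I}^{\bn} = \bigoplus_{j\ge c+\bn\cdot\delta}[\mathbf{I}^{\bn}]_{j}$. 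Consequently $\mm^{c}\mathbf{I}^{\bn}/\mm^{c+1}\mathbf{I}^{\bn}$ will be concentrated in the single internal degree $c+\bn\cdot\delta$, where it coincides with $[\mathbf{I}^{\bn}]_{c+\bn\cdot\delta}$, yielding the identity.

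Once the identity above is secured, the Hilbert functions of $\Rees(I_1,\ldots,I_p)$ and $T(\mm\mid I_1,\ldots,I_p)$ agree for all $(c,\bn)\in\NN^{p+1}$, so their multigraded Hilbert polynomials are equal in $\QQ[t_0,\ttt]$. Reading off the coefficients indexed by $(d_0,\dd)$ with $d_0+|\dd|=r$ then yields the desired equality. I do not anticipate a genuine obstacle here; the only subtlety I expect is careful bookkeeping of the $\NN^{p+1}$-grading conventions (verifying that the shift $\deg(t_i) = -\delta_i\ee_1 + \ee_{i+1}$ in the Rees algebra lines up with the internal-degree shift by $\bn\cdot\delta$ appearing on the $T$ side). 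The whole argument ultimately rests on the equigenerated hypothesis on the ideals $I_i$.
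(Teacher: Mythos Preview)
Your proposal is correct and follows essentially the same approach as the paper: both arguments show that $\Rees(I_1,\ldots,I_p)$ and $T(\mm\mid I_1,\ldots,I_p)$ share the same multigraded Hilbert function by exploiting the equigeneration hypothesis to identify $\mm^{c}\mathbf{I}^{\bn}/\mm^{c+1}\mathbf{I}^{\bn}$ with $[\mathbf{I}^{\bn}]_{c+\bn\cdot\delta}$. The paper phrases the key step as an application of Nakayama's lemma, whereas you spell out directly that $\mm^{c}\mathbf{I}^{\bn}=\bigoplus_{j\ge c+\bn\cdot\delta}[\mathbf{I}^{\bn}]_{j}$; these are equivalent justifications.
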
	
	\begin{proof}
		Consider the $\NN^{p+1}$-graded $\kk$-algebra $T(\mm \mid I_1, \ldots, I_p)$.
		Since the ideals $I_1,\ldots,I_p$ are equally generated in degrees $\delta_1, \ldots,\delta_p$, by Nakayama's lemma we obtain 
		\begin{align*}
			\dim_{\kk}\left(\left[T(\mm \mid I_1, \ldots, I_p)\right]_{n_0, \bn}\right) & = 
			\dim_{\kk}\left(\frac{\mm^{n_0} I_1^{n_1} \cdots I_p^{n_p}}{\mm^{n_0+1} I_1^{n_1} \cdots I_p^{n_p}}\right)\\
			& = \dim_{\kk}\left(\left[\mm^{n_0} I_1^{n_1} \cdots I_p^{n_p}\right]_{n_0 + \bn \cdot \delta}\right) \\
			& = \dim_{\kk}\left(\left[I_1^{n_1} \cdots I_p^{n_p}\right]_{n_0 + \bn \cdot \delta}\right) \\
			& = \dim_{\kk}\left(\left[\Rees(I_1, \ldots, I_p)\right]_{n_0, \bn}\right)
		\end{align*}
		for all $(n_0, \bn) = (n_0,n_1, \ldots, n_p) \in \NN^{p+1}$.
		This shows that $T(\mm \mid I_1, \ldots, I_p)$ and $\Rees(I_1, \ldots, I_p)$ have the same Hilbert function, and so the result of the lemma follows.
	\end{proof}

	The following proposition deals with the process of cutting the graph $\Gamma$ with general hyperplanes in the target space $\PP$. Note that this is similar to \cite[Proposition 5.6]{MIXED_MULT}, where the case of cutting $\Gamma$ with general hyperplanes in the source $\PP_\kk^r$ was treated.

	\begin{proposition}
		\label{prop_cut_hyper}
		Assume \autoref{setup_rat_maps} with $\kk$ being an infinite field.
		Fix $1 \le i \le p$.
		Let $h = \alpha_0y_{i,0} + \alpha_1y_{i,1} + \cdots + \alpha_{m_i}y_{i,m_i}$ with $\alpha_i \in \kk$ such that $V(h) \subset \PP_\kk^{m_i}$ is a general hyperplane.
		Let $f = \alpha_0f_{i,0} + \alpha_1f_{i,1} + \cdots + \alpha_{m_i}f_{i,m_i} \in I_i$, $S = R/fR$ and $J_j = I_jS \subset S$ for all $1 \le j \le p$.
		Then, for all $(d_0, \dd) = (d_0,d_1,\ldots,d_p) \in \NN^{p+1}$ with $d_0 + |\dd| = r$ and $d_i \ge 1$, we have 
		$$
		\deg_\DD^{(d_0,\dd)}(\Gamma) \, = \, e\big((d_0,\ldots,d_i-1,\ldots,d_p), \Rees_S(J_1,\ldots,J_p)\big).
		$$
	\end{proposition}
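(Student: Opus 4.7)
The plan is to employ the short exact sequence of $\NN^{p+1}$-graded $\Rees(I_1,\ldots,I_p)$-modules coming from multiplication by the image of $h$, and then to identify $\Rees(I_1,\ldots,I_p)/(h)$ with $\Rees_S(J_1,\ldots,J_p)$ up to a submodule of strictly smaller multigraded Hilbert dimension. This mirrors the strategy of \cite[Proposition~5.6]{MIXED_MULT}, but now in the target direction rather than the source direction.

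First, I would observe that under the presentation $\AAA \twoheadrightarrow \Rees(I_1,\ldots,I_p)$ sending $y_{i,j}\mapsto f_{i,j}t_i$, the form $h$ is sent to $ft_i$, which is nonzero since $f\neq 0$ for a general choice of the $\alpha_j$. Because $\Rees(I_1,\ldots,I_p)\subset R[t_1,\ldots,t_p]$ is a domain, $ft_i$ is a nonzerodivisor and the short exact sequence
\[
0\to \Rees(I_1,\ldots,I_p)(-\ee_{i+1})\xrightarrow{\,\cdot ft_i\,}\Rees(I_1,\ldots,I_p)\to \Rees(I_1,\ldots,I_p)/(ft_i)\to 0
\]
yields on multigraded Hilbert polynomials the identity $P_{\Rees/(ft_i)}(c,\bn) = P_{\Rees}(c,\bn) - P_{\Rees}(c,\bn-\ee_i)$, where $\ee_i$ here denotes the $i$-th standard basis vector in $\NN^p$. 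Comparing the degree-$(r-1)$ parts of both sides gives
\[
e\bigl((d_0,d_1,\ldots,d_i-1,\ldots,d_p),\, \Rees(I_1,\ldots,I_p)/(ft_i)\bigr)\;=\;e\bigl((d_0,\dd),\,\Rees(I_1,\ldots,I_p)\bigr)\;=\;\deg_\DD^{(d_0,\dd)}(\Gamma)
\]
for all $(d_0,\dd)$ with $d_0+|\dd|=r$ and $d_i\ge 1$.

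Next, the projection $R\twoheadrightarrow S=R/fR$ induces a natural surjection of $\NN^{p+1}$-graded $\kk$-algebras $\pi:\Rees(I_1,\ldots,I_p)/(ft_i)\twoheadrightarrow \Rees_S(J_1,\ldots,J_p)$, since $f\in I_i$ ensures that $ft_i$ lies in the kernel of $\Rees(I_1,\ldots,I_p)\twoheadrightarrow \Rees_S(J_1,\ldots,J_p)$. A direct computation using the identity $(I_1^{n_1}\cdots I_p^{n_p})\cap fR=f\cdot\left((I_1^{n_1}\cdots I_p^{n_p}):_R f\right)$ (valid since $f$ is a nonzerodivisor on $R$) shows that for $n_i\ge 1$ the kernel of $\pi$ in multidegree $(c,\bn)$ is isomorphic to
\[
\left[\left((I_1^{n_1}\cdots I_p^{n_p}):_R f\right)\,\big/\,I_1^{n_1}\cdots I_i^{n_i-1}\cdots I_p^{n_p}\right]_{c+\bn\cdot\delta-\delta_i}.
\]

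The main obstacle, and the place where the generality of $h$ is essential, is the claim that for a general $f\in[I_i]_{\delta_i}$ this kernel module has multigraded Hilbert polynomial of total degree strictly less than $r-1$ in $(c,\bn)$. This is a multigraded analogue of the superficial-element property: it amounts to showing that asymptotically $(I_1^{n_1}\cdots I_p^{n_p}):_R f$ and $I_1^{n_1}\cdots I_i^{n_i-1}\cdots I_p^{n_p}$ agree in the relevant graded pieces. I would prove this by a prime-avoidance argument adapted to the multi-filtration $\{I_1^{n_1}\cdots I_p^{n_p}\}$: the module $\bigoplus_{\bn}(R/I_1^{n_1}\cdots I_p^{n_p})$ is naturally a finitely generated $\NN^p$-graded $\Rees(I_1,\ldots,I_p)$-module, so its top-dimensional associated primes form a finite set and can all be avoided by a general linear combination of the generators of $I_i$, forcing the cokernel of multiplication by $f$ on large graded pieces to have strictly smaller multigraded support; the infinite-field hypothesis guarantees that such a general $f$ exists. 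Once this dimension estimate is in place, the mixed multiplicities of $\Rees(I_1,\ldots,I_p)/(ft_i)$ and $\Rees_S(J_1,\ldots,J_p)$ of type $(d_0,d_1,\ldots,d_i-1,\ldots,d_p)$ coincide, and combining this with the Hilbert-polynomial comparison from the exact sequence completes the proof.
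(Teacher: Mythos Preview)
Your overall architecture matches the paper's proof: pass to $\Rees(I_1,\ldots,I_p)/(h)$ via a short exact sequence, then compare with $\Rees_S(J_1,\ldots,J_p)$ via the natural surjection and show the kernel is too small to affect the relevant mixed multiplicities. Your first step is even slightly cleaner than the paper's, since you use that $\Rees(I_1,\ldots,I_p)$ is a domain (so $ft_i$ is automatically a nonzerodivisor) in place of the filter-regularity lemma the paper cites. The explicit identification of the kernel as $\bigoplus_{n_i\ge 1}\big[(I^\bn:f)/I^{\bn-\ee_i}\big]_{c+\bn\cdot\delta-\delta_i}$ is also correct.

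The gap is in your dimension bound for the kernel. You assert that $M:=\bigoplus_{\bn} R/I_1^{n_1}\cdots I_p^{n_p}$ is a finitely generated $\Rees(I_1,\ldots,I_p)$-module and then invoke prime avoidance on its associated primes. But $M$ is \emph{not} finitely generated over $\Rees$. Already for $p=1$, $R=\kk[x]$, $I=(x)$, one checks that every element of positive $t$-degree in $\Rees=\kk[x,xt]$ acts on $M$ by raising the $x$-power, so the class $1+(x^n)\in R/(x^n)$ can never be reached from lower $t$-degrees; a generator is needed in every degree $n$. Without finite generation, the associated-prime argument and the ensuing ``superficial element'' conclusion do not go through as written.

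The paper repairs exactly this point by replacing $M$ with the associated graded ring
\[
\gr_{I_i}\big(\Rees(I_1,\ldots,\widehat{I_i},\ldots,I_p)\big)\;=\;\Rees(I_1,\ldots,I_p)\otimes_R R/I_i,
\]
which \emph{is} a finitely generated quotient of $\Rees$ and has Krull dimension $r+p$. One first observes, by localizing away from $V(I_i)$, that the surjection $\Rees/(h)\twoheadrightarrow\Rees_S(J_1,\ldots,J_p)$ becomes an isomorphism, so some power $I_i^{\,l}$ annihilates the kernel; hence $\Supp_{++}(\Ker)\subset\Supp_{++}\big(\gr_{I_i}/(h)\big)$. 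Choosing $h$ general so that it is also filter-regular on $\gr_{I_i}$ then gives $\dim\Supp_{++}(\Ker)\le r-2$, which is the bound you need. Your argument can be salvaged by making this substitution, but as stated the finite-generation claim is false and the prime-avoidance step fails.
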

	\begin{proof}	
		By using \cite[Lemma 3.7]{MIXED_MULT}, we choose $(\alpha_0,\ldots,\alpha_{m_i})$ in such a way that 
		$$
		h = \alpha_0y_{i,0} + \alpha_1y_{i,1} + \cdots + \alpha_{m_i}y_{i,m_i} \in \kk[\yy_i] \subset \AAA = R \otimes_\kk \kk[\yy_1] \otimes_\kk \cdots \otimes_\kk \kk[\yy_p]
		$$
		becomes a filter-regular element on $\Rees(I_1,\ldots,I_p)$ and on 
		$$
		\gr_{I_i}\big(\Rees(I_1,\ldots,\widehat{I_{i}},\ldots,I_p)\big) = \Rees(I_1,\ldots,I_p) \otimes_R R/I_i = \bigoplus_{n_1,\ldots,n_p \ge 0} \frac{I_1^{n_1}\cdots I_i^{n_i}\cdots I_p^{n_p}}{I_1^{n_1}\cdots I_i^{n_i+1}\cdots I_p^{n_p}}, 
		$$
		where $\Rees(I_1,\ldots,\widehat{I_{i}},\ldots,I_p) = \Rees(I_1,\ldots,I_{i-1},I_{i+1},\ldots,I_p)$.
		So,  by \cite[Lemma 3.9]{MIXED_MULT}, we have that 
		$$
		\deg_\DD^{(d_0,\dd)}(\Gamma) \, = \, e\big((d_0,\dd-\ee_i), \Rees(I_1,\ldots,I_p) / h\, \Rees(I_1,\ldots,I_p)\big)
		$$
		for all $(d_0,\dd) \in \NN^{p+1}$ with $d_0+|\dd| = r$ and $d_i \ge 1$.
		Notice that we have the following natural surjection
		$$
		\mathfrak{s} \,:\;  \frac{\Rees(I_1,\ldots,I_p)}{h\, \Rees(I_1,\ldots,I_p)} \cong \bigoplus_{\bn \in \NN^p} \frac{\mathbf{I}^\bn}{f\,\mathbf{I}^{\bn - \ee_i}} \ttt^\bn \;\; \twoheadrightarrow  \;\; \Rees_S(J_1,\ldots,J_p) \cong \bigoplus_{\bn \in \NN^p} \frac{\mathbf{I}^\bn + fR}{fR} \ttt^\bn. 
		$$
		For any $\pp \in \Spec(R) \setminus V(I_i)$, the localization  $\mathfrak{s} \otimes_R R_\pp$ of the surjection $\mathfrak{s}$ becomes an isomorphism. 
		Hence, there is some $l > 0$ such that $I_i^l \cdot \Ker(\mathfrak{s}) = 0$.
		Known dimension computations give us that $\dim\left(\Rees(I_1,\ldots,I_p)\right) =  r+1+p$ and $\dim\left(\gr_{I_i}\big(\Rees(I_1,\ldots,\widehat{I_{i}},\ldots,I_p)\big)\right) = r+p$, see e.g. \cite[\S 17.5]{huneke2006integral}.
		For any finitely generated $\NN^{p+1}$-graded $\AAA$-module $M$, we set $\Supp_{++}(M) := \Supp(M) \cap \multProj(\AAA)$.
		Since $h$ is a filter-regular element on $\gr_{I_i}\big(\Rees(I_1,\ldots,\widehat{I_{i}},\ldots,I_p)\big)$ and on $\Rees(I_1,\ldots,I_p)$, \cite[Lemma 3.9]{MIXED_MULT} and \cite[Lemma 1.2]{HYRY_MULTIGRAD} imply that 
		\begin{align*}
			\dim\left(\Supp_{++}\left(\Ker(\mathfrak{s})\right)\right) &\le \dim\left(\Supp_{++}\left(\frac{\Rees(I_1,\ldots,I_p)}{h\, \Rees(I_1,\ldots,I_p)} \otimes_R R/I_i\right)\right) \\
			&= \dim\left(\Supp_{++}\left(\gr_{I_i}\big(\Rees(I_1,\ldots,\widehat{I_{i}},\ldots,I_p)\big) \otimes_\AAA \AAA/h\AAA\right)\right) \\
			&\le (r+p) - (p+1) -1 = r-2
		\end{align*}
		and 
		$$
		\dim\left(\Supp_{++}\left(\frac{\Rees(I_1,\ldots,I_p)}{h\, \Rees(I_1,\ldots,I_p)}\right)\right) = (r+1+p) - (p+1) -1 = r-1.
		$$
		Finally, the short exact sequence $0 \rightarrow  \Ker(\mathfrak{s}) \rightarrow \frac{\Rees(I_1,\ldots,I_p)}{h\, \Rees(I_1,\ldots,I_p)} \xrightarrow{\mathfrak{s}} \Rees_S(J_1,\ldots,J_p) \rightarrow 0$ and the additivity of mixed multiplicities yield 
		$$
		\deg_\DD^{(d_0,\dd)}(\Gamma) \, = \, e\left((d_0,\dd-\ee_i), \frac{\Rees(I_1,\ldots,I_p)}{h\, \Rees(I_1,\ldots,I_p)}\right) \,=\, e\left((d_0,\dd-\ee_i), \Rees_S(J_1,\ldots,J_p)\right),
		$$
		and so the proof of the proposition is complete.
	\end{proof}
	
	The next lemma will allow us to simplify the families of rational maps we consider in this section. 
	It shows that the mixed multiplicities of $\sfib(I_1,\ldots,I_p)$ depend only on a certain part of the data given and is similar to \cite[Proposition 2.11]{SAT_FIB_GOR_3}.
	
	\begin{lemma}
		\label{lem_mm_prim_ideals}
		Assume \autoref{setup_rat_maps} and suppose that the ideals $I_1,\ldots,I_{p-1}$ are $\mm$-primary. 
		Then, the following statements hold:
		\begin{enumerate}[\rm (i)]
			\item We have the equality
			$$
			\sfib(I_1,\ldots,I_p) \,= \, \bigoplus_{n_1,\ldots,n_p \ge 0} \left[\big(I_p^{n_p}\big)^{\sat}\right]_{n_1\delta_1 + \cdots + n_p\delta_p}.
			$$
			In particular, for all $\dd \in \NN^p$ with $|\dd| = r$, the value of 
			$
			e\big(\dd, \sfib(I_1,\ldots,I_p)\big) \,=\, \deg(\GG) \cdot \deg_\PP^\dd
			(Y)
			$		
			depends only on the degrees $\delta_1,\ldots,\delta_{p-1}$ and on the ideal $I_p$.
			
			\item $
			e\big((0,\ldots,0,r),\, \sfib(I_1,\ldots,I_p)\big) \;=\; e\big((0,r), \, \Rees(I_p)\big) \;=\, e\big(\sfib(I_p)\big).
			$ 
		\end{enumerate} 		
	\end{lemma}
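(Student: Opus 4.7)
I aim to prove part (i) via a saturation-sandwich argument and then upgrade the sandwich to a polynomial identity of Hilbert polynomials, yielding part (ii).

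For part (i), since each $I_i$ with $1\le i\le p-1$ is $\mm$-primary, I fix integers $k_i$ with $\mm^{k_i}\subseteq I_i$. Then for any $\bn=(n_1,\dots,n_p)\in\NN^p$ and $N:=k_1n_1+\cdots+k_{p-1}n_{p-1}$, one obtains the sandwich
\[
\mm^N I_p^{n_p}\ \subseteq\ I_1^{n_1}\cdots I_p^{n_p}\ \subseteq\ I_p^{n_p}.
\]
Since $\mm$-saturation is unaffected by multiplication by an $\mm$-primary ideal, namely $(\mm^N J:_R\mm^\infty)=(J:_R\mm^\infty)$ for any ideal $J$, the three ideals share a common $\mm$-saturation, so $(I_1^{n_1}\cdots I_p^{n_p}:_R\mm^\infty)=(I_p^{n_p}:_R\mm^\infty)$. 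Taking the graded piece in degree $\bn\cdot\delta$ and summing over $\bn\in\NN^p$ produces the displayed formula. The dependence statement is immediate, and the identification with $\deg(\GG)\cdot\deg_\PP^\dd(Y)$ is \autoref{thm:main_result_sat_special_fiber_ring}.

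For part (ii), I sharpen the sandwich. Since $I_p^{n_p}$ is generated in degree $n_p\delta_p$, every form in $[I_p^{n_p}]_e$ with $e\ge N+n_p\delta_p$ is a sum of products of such a generator by an element of $[R]_{e-n_p\delta_p}=[\mm^N]_{e-n_p\delta_p}$. Thus $[\mm^N I_p^{n_p}]_e=[I_p^{n_p}]_e$ for $e\ge N+n_p\delta_p$, and hence $[I_1^{n_1}\cdots I_p^{n_p}]_e=[I_p^{n_p}]_e$. Writing $e=c+\bn\cdot\delta$, this yields
\[
\dim_\kk[\Rees(I_1,\dots,I_p)]_{(c,\bn)}\ =\ \dim_\kk[\Rees(I_p)]_{(c+\sum_{i<p}n_i\delta_i,\,n_p)}
\]
whenever $c+\sum_{i<p}n_i\delta_i\ge N$, a condition satisfied on a sufficiently large subset of $\NN^{p+1}$. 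As both sides are polynomial functions of $(c,\bn)$ on the relevant ranges, this forces the polynomial identity
\[
P_{\Rees(I_1,\dots,I_p)}(t_0,t_1,\dots,t_p)\ =\ P_{\Rees(I_p)}\bigl(t_0+\textstyle\sum_{i<p}\delta_it_i,\,t_p\bigr).
\]
Specializing $t_1=\cdots=t_{p-1}=0$ and extracting the coefficient of $\binom{t_0}{0}\binom{t_p+r}{r}$, the unique top-degree basis element surviving, yields $e((0,\dots,0,r),\Rees(I_1,\dots,I_p))=e((0,r),\Rees(I_p))$. Combining with \autoref{thm_deg_graph_sat_fib} applied both to $\GG$ and to the component map $\GG_p$ produces the full chain of equalities.

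The main subtlety lies in passing from the ideal sandwich to the polynomial identity: the degree threshold $e\ge N+n_p\delta_p$ must be tracked precisely and shown to produce a region large enough that agreement of Hilbert functions forces agreement of Hilbert polynomials. Once this is settled, the remaining steps are routine binomial-basis manipulations.
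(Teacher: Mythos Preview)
Your proof is correct. Part (i) is essentially the paper's argument rephrased: the paper observes that $(I_1^{n_1}\cdots I_p^{n_p})^\sim = (I_p^{n_p})^\sim$ via localization at primes $\pp \neq \mm$, which is the sheaf-theoretic version of your sandwich $\mm^N I_p^{n_p}\subseteq I_1^{n_1}\cdots I_p^{n_p}\subseteq I_p^{n_p}$ followed by saturation.

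For part (ii) your route differs from the paper's. The paper stays with $\sfib(I_1,\dots,I_p)$: it uses part~(i) to rewrite the $\bn$-th graded piece as $[(I_p^{n_p})^{\sat}]_{\bn\cdot\delta}$, then invokes the vanishing $[\HL^1(\Rees(I_p))]_k=0$ for $k\gg 0$ to obtain a uniform bound $e$ such that $[(I_p^n)^{\sat}]_{n\delta_p+k}=[I_p^n]_{n\delta_p+k}$ for all $n$ and all $k\ge e$; fixing $n_1',\dots,n_{p-1}'$ with $\sum n_i'\delta_i\ge e$ then lets $n_p\to\infty$ along a single ray and compares directly with the bigraded Hilbert polynomial of $\Rees(I_p)$. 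Only one application of \autoref{thm_deg_graph_sat_fib} is needed, at the end. You instead pass through the full multigraded Rees algebra: your sandwich yields $[\Rees(I_1,\dots,I_p)]_{(c,\bn)}=[\Rees(I_p)]_{(c+\sum_{i<p}n_i\delta_i,\,n_p)}$ on the region $c\ge\sum_{i<p}(k_i-\delta_i)n_i$, and you upgrade this to the polynomial identity $P_{\Rees(I_1,\dots,I_p)}(t_0,\dots,t_p)=P_{\Rees(I_p)}(t_0+\sum_{i<p}\delta_it_i,\,t_p)$; then \autoref{thm_deg_graph_sat_fib} is applied at both ends. Your approach trades the local-cohomology input for an extra invocation of \autoref{thm_deg_graph_sat_fib}, and as a byproduct gives the stronger identity relating \emph{all} mixed multiplicities of the two Rees algebras, not only the one of type $(0,\dots,0,r)$. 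The subtlety you flag---that the agreement region, though not a shifted orthant, still suffices to pin down the polynomial---is genuine but is handled by the standard slice argument: fix $n_1,\dots,n_{p-1}$ large, conclude equality as polynomials in $(c,n_p)$, then vary $n_1,\dots,n_{p-1}$.
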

	\begin{proof}
		(i) By the assumption of $I_1,\ldots,I_{p-1}$ being $\mm$-primary ideals, we have $I_1^{n_1}\cdots I_p^{n_p} R_\pp = I_p^{n_p} R_\pp$ for all $\pp \in \Spec(R) \setminus  \{\mm\}$ and $n_1,\ldots,n_p \in \NN$.
		We then obtain the equality $\big(I_1^{n_1}\cdots I_p^{n_p}\big)^\sim = \big(I_p^{n_p}\big)^\sim$ of sheaves.
		It follows that
		$$
		\sfib(I_1,\ldots,I_p) = \bigoplus_{\bn \in \NN^p}  \HH^0\left(\PP_\kk^r, \big(I_1^{n_1}\cdots  I_p^{n_p}\big)^\sim(\bn \cdot \delta)\right)
		= 
		\bigoplus_{\bn \in \NN^p}  \HH^0\left(\PP_\kk^r, \big(I_p^{n_p}\big)^\sim(\bn \cdot \delta)\right) = \bigoplus_{\bn \in \NN^p} \left[\big(I_p^{n_p}\big)^{\sat}\right]_{\bn \cdot \delta}.
		$$
		So, the first statement of the lemma follows. 
		
		(ii) Consider the Rees algebra $\Rees(I_p)$ with the usual bigrading $\Rees(I_p) = \bigoplus_{c,n\in \NN} [\Rees(I_p)]_{c,n}$ where $[\Rees(I_p)]_{c,n} = \big[I_p\big]_{c + n\delta_p}$.
		Let  $Q(c,n) := P_{\Rees(I)}(c,n)$ be the bigraded Hilbert polynomial of $\Rees(I_p)$.
		Choose $e > 0$ such that $\left[\HL^1\left(\Rees(I_p)\right)\right]_k = 0$ for all $k \ge e$.
		From the grading chosen for the Rees algebra, it follows that 
		$$
		\left[\HL^0(R/I_p^n)\right]_{n\delta_p + k} \cong \left[\HL^1(I_p^n)\right]_{n \delta_p + k} =0
		$$ 
		for all $n \ge 0$ and $k \ge e$.
		This implies that  $\left[I_p^n\right]_{n\delta_p + k} = \left[(I_p^n)^\sat\right]_{n\delta_p + k}$ for all $n \ge 0$ and $k \ge e$.
		
		Let $P(\bn) := P_{\sfib(I_1,\ldots,I_p)}(n_1,\ldots,n_p)$ be the multigraded Hilbert polynomial of $\sfib(I_1,\ldots,I_p)$. 
		Choose positive integers $n_1',\ldots,n_p'$ with $n_1'\delta_1+\cdots+n_{p-1}'\delta_{p-1} \ge e$ such that $P(n_1,\ldots,n_p)$ equals the Hilbert function of $\sfib(I_1,\ldots,I_p)$ when $n_i \ge n_i'$.	
		We can also assume that $Q(c, n)$ equals the Hilbert function of $\Rees(I_p)$ when $c \ge c':= n_1'\delta_1+\cdots+n_{p-1}'\delta_{p-1}$ and $n \ge n_p'$.
		Therefore, with the above choices in place, we obtain the following equalities 
		\begin{align*}
			P(n_1',\ldots,n_{p-1}',n) &= 
			\dim_{\kk}\left(\left[\left(I_p^n\right)^\sat\right]_{n_1'\delta_1 + \cdots + n_{p-1}'\delta_{p-1} + n\delta_p}\right)   \\
			&= \dim_{\kk}\left(\left[I_p^n\right]_{n_1'\delta_1 + \cdots + n_{p-1}'\delta_{p-1} + n\delta_p}\right) \\
			&= \dim_{\kk}\left([\Rees(I_p)]_{c', \,n}\right) = Q(c', n)
		\end{align*}
		for all $n \ge n_p'$.
		As a consequence, we obtain
		$$
		e\big((0,\ldots,0,r),\, \sfib(I_1,\ldots,I_p)\big) \,=\,	
		\lim_{n\rightarrow \infty}\, \frac{P(n_1',\ldots,n_{p-1}', n)}{n^r/r!} \,=\, \lim_{n\rightarrow \infty}\, \frac{Q(c',n)}{n^r/r!} \,=\, e\big((0,r), \, \Rees(I_p)\big).
		$$
		Finally,  \cite[Theorem 5.4]{MIXED_MULT} (or \autoref{thm_deg_graph_sat_fib} above) implies that $e\big((0,r),\, \Rees(I_p)\big) = e\big(\sfib(I_p)\big)$, and so we are done.
	\end{proof}

	We are now ready to state our main result in this section. 
	We say that an ideal $I \subset R$ satisfies the condition $G_{r+1}$ when $\mu(I_{\pp}) \le \dim(R_{\pp})$ for all $\pp \in V(I) \subset \Spec(R)$ with $\HT(\pp)<r+1$, where $\mu(I_\pp)$ is the minimal number of generators of $I_\pp$.			
	The following theorem gives exact formulas for the multidegrees of the saturated special fiber ring $e\big(\dd, \, \sfib(I_1,\ldots,I_p)\big)$ for several families of rational maps.
	
	\begin{theorem} \label{thm:multidegree_formula_perfect_ht2_gorenstein_ht3}
		Assume \autoref{setup_rat_maps} and suppose that the ideals $I_1,\ldots,I_{p-1}$ are $\mm$-primary and that $m_p \ge r$. 
		\begin{enumerate}[\rm (I)]
			\item Suppose that the following conditions hold:
			\begin{enumerate}[\rm (a)]
				\item $I_p$ is perfect of height two with Hilbert-Burch resolution of the form
				$$
				0 \rightarrow \bigoplus_{i=1}^{m_p}R(-\delta_p-\mu_i) \rightarrow {R(-\delta_p)}^{m_p+1} \rightarrow I_p \rightarrow 0.			
				$$
				\item $I$ satisfies the condition $G_{r+1}$.
			\end{enumerate}
			Then, for all $\dd = (d_1,\ldots,d_p) \in \NN^{p}$ with $|\dd| = r$, we have 
			$$
			\deg(\GG) \cdot \deg_\PP^\dd(Y) \,=\,  
			\delta_1^{d_1}\cdots \delta_{p-1}^{d_{p-1}}\cdot e_{d_p}(\mu_1,\ldots,\mu_{m_p}),
			$$
			where $	e_{d_p}(\mu_1,\ldots,\mu_{m_p})$ denotes the elementary symmetric polynomial
			$$
			e_{d_p}(\mu_1,\ldots,\mu_{m_p})= 
			\sum_{1\le j_1  < \cdots < j_{d_p} \le m_p} \mu_{j_1}\cdots\mu_{j_{d_p}}.
			$$
			
			\smallskip
			
			\item Suppose that the following conditions hold:
			\begin{enumerate}[\rm (a)]
				\item $I_p$ is a Gorenstein ideal of height three.
				\item Every nonzero entry of an alternating minimal presentation matrix of $I_p$  has degree $D\ge 1$.
				\item $I_p$ satisfies the condition $G_{r+1}$.		
			\end{enumerate}
			Then, for all $\dd = (d_1,\ldots,d_p) \in \NN^{p}$ with $|\dd| = r$, we have 
			$$
			\deg(\GG) \cdot \deg_\PP^\dd(Y) \,=\, \begin{cases}
				\delta_1^{d_1}\cdots \delta_{p-1}^{d_{p-1}}\cdot D^{d_p} \sum_{k=0}^{\lfloor\frac{m_p-d_p}{2}\rfloor}\binom{m_p-1-2k}{d_p-1} \quad \text{ if } d_p \ge 3 \\
				\delta_1^{d_1}\cdots \delta_{p-1}^{d_{p-1}}\delta_p^{d_p} \qquad\qquad \quad\;\qquad\qquad\qquad \text{ otherwise.}
			\end{cases}
			$$	
		\end{enumerate}
	\end{theorem}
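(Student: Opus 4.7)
By \autoref{thm:main_result_sat_special_fiber_ring} it suffices to compute $e(\dd, \sfib(I_1, \ldots, I_p))$. The plan is to use \autoref{lem_mm_prim_ideals} to rewrite the Hilbert polynomial of $\sfib(I_1, \ldots, I_p)$ in terms of the bigraded Hilbert polynomial of the single-ideal Rees algebra $\Rees(I_p)$, and then compute the latter via restriction to a generic linear subspace of $\PP_\kk^r$, where the hypotheses allow a direct application of the known single-ideal formulas from \cite{MULT_SAT_PERF_HT_2} (case I) and \cite{SAT_FIB_GOR_3} (case II).

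\textbf{From $\sfib$ to a graph multidegree.} By \autoref{lem_mm_prim_ideals}(i), for $\bn \gg \mathbf{0}$,
\[
\dim_\kk[\sfib(I_1, \ldots, I_p)]_\bn \,=\, \dim_\kk\bigl[(I_p^{n_p})^{\sat}\bigr]_{n_1 \delta_1 + \cdots + n_p \delta_p} \,=\, \dim_\kk[\Rees(I_p)]_{n_1 \delta_1 + \cdots + n_{p-1} \delta_{p-1},\,n_p}
\]
(the second equality uses the excerpt's convention $[\Rees(I_p)]_{c, n_p} = [I_p^{n_p}]_{c + n_p \delta_p}$ together with the asymptotic vanishing of $\HL^0(R/I_p^{n_p})$ in large degree). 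Hence the Hilbert polynomial of $\sfib(I_1, \ldots, I_p)$ is obtained from that of $\Rees(I_p)$ by the linear substitution $c \mapsto n_1 \delta_1 + \cdots + n_{p-1} \delta_{p-1}$. Expanding multinomially and extracting the coefficient of $n_1^{d_1} \cdots n_p^{d_p}$ in the top-degree part yields
\[
e(\dd, \sfib(I_1, \ldots, I_p)) \;=\; \delta_1^{d_1} \cdots \delta_{p-1}^{d_{p-1}} \cdot \deg_{\PP_\kk^r \times_\kk \PP_\kk^{m_p}}^{(r - d_p,\, d_p)}(\Gamma_p),
\]
where $\Gamma_p$ is the graph of $\GG_p$. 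This already isolates the announced factor $\delta_1^{d_1}\cdots \delta_{p-1}^{d_{p-1}}$ and reduces the proof to computing one multidegree of the single-target graph $\Gamma_p$.

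\textbf{The graph multidegree via linear restriction.} Choose a generic linear subspace $L_0 \subset \PP_\kk^r$ of dimension $d_p$, cut out by $r - d_p$ generic linear forms, so that $R_{L_0} \cong \kk[y_0, \ldots, y_{d_p}]$ is a polynomial ring. Set $J_p := I_p R_{L_0}$. The standard intersection-theoretic description of multidegrees together with \autoref{thm_deg_graph_sat_fib} applied to $\GG_p|_{L_0}: L_0 \dashrightarrow \PP_\kk^{m_p}$ gives
\[
\deg^{(r-d_p,\, d_p)}(\Gamma_p) \;=\; \deg(\GG_p|_{L_0})\cdot \deg\!\bigl(\overline{\GG_p|_{L_0}(L_0)}\bigr) \;=\; e\bigl(\sfib_{R_{L_0}}(J_p)\bigr).
\]
For case I with $d_p \ge 1$ (respectively case II with $d_p \ge 2$), the defining linear forms of $L_0$ form a regular sequence on both $R$ and $R/I_p$, so tensoring the Hilbert-Burch resolution (resp.\ the alternating Gorenstein resolution) of $I_p$ with $R_{L_0}$ yields a resolution of the same shape for $J_p$ with identical twists $\mu_i$ (resp.\ presentation degree $D$); the $G_{r+1}$ hypothesis on $I_p$ descends to the $G_{d_p+1}$ property of $J_p$ by a Bertini-type argument. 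Thus \cite[Theorem A]{MULT_SAT_PERF_HT_2} or \cite[Theorem A]{SAT_FIB_GOR_3} applies directly in the polynomial ring $R_{L_0}$ of dimension $d_p + 1$, producing the closed values $e_{d_p}(\mu_1, \ldots, \mu_{m_p})$ (case I) or $D^{d_p}\sum_{k}\binom{m_p-1-2k}{d_p-1}$ (case II, $d_p \ge 3$); multiplying by $\delta_1^{d_1} \cdots \delta_{p-1}^{d_{p-1}}$ gives the theorem in these ranges.

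\textbf{Edge cases and obstacle.} The remaining low-dimensional cases (case I: $d_p = 0$; case II: $d_p \in \{0, 1, 2\}$) are handled by direct geometric inspection: in case II, a generic $L_0$ of dimension at most $2$ avoids $V(I_p)$ entirely, since $\dim V(I_p) = r - 3$, so $\GG_p|_{L_0}$ is a morphism defined by degree-$\delta_p$ forms on $L_0 \cong \PP_\kk^{d_p}$ whose image has degree $\delta_p^{d_p}$, whence $\deg^{(r-d_p, d_p)}(\Gamma_p) = \delta_p^{d_p}$; case I with $d_p = 0$ is trivial. The principal technical obstacle is the Bertini-type verification that a generic linear section preserves the height, the shape of the resolution, and the $G_s$-property of the base ideal; this is standard but requires careful prime-avoidance arguments to ensure the transferred resolution of $J_p$ in $R_{L_0}$ is again minimal with the expected twists.
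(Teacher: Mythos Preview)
Your proposal is correct and reaches the same endpoint as the paper, but the route differs in two places.

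\textbf{Where you are more direct.} After invoking \autoref{thm:main_result_sat_special_fiber_ring}, you use \autoref{lem_mm_prim_ideals}(i) together with the uniform vanishing of $\big[\HL^0(R/I_p^{n})\big]_{n\delta_p+k}$ to identify the Hilbert polynomial of $\sfib(I_1,\ldots,I_p)$ directly with the substitution $c\mapsto n_1\delta_1+\cdots+n_{p-1}\delta_{p-1}$ in $P_{\Rees(I_p)}(c,n_p)$, giving
\[
e\big(\dd,\sfib(I_1,\ldots,I_p)\big)\;=\;\delta_1^{d_1}\cdots\delta_{p-1}^{d_{p-1}}\cdot e\big((r-d_p,d_p),\Rees(I_p)\big).
\]
The paper arrives at the same quantity (the projective degree $d_{r-d_p}(\GG_p)$) only after first reducing to $I_1=\cdots=I_{p-1}=\mm$, then invoking \autoref{thm_deg_graph_sat_fib}, \autoref{prop_cut_hyper}, \autoref{lem_mm_prim_ideals}(ii), and finally \cite[Proposition~5.6(i)]{MIXED_MULT}. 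Your shortcut is legitimate and cleaner.

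\textbf{Where the paper is cleaner.} To evaluate $e\big((r-d_p,d_p),\Rees(I_p)\big)$, the paper simply cites the closed formulas \cite[Theorems~5.7 and~5.8]{MIXED_MULT} for the projective degrees of $\GG_p$. You instead restrict to a generic linear $L_0\cong\PP_\kk^{d_p}$ and apply \cite[Theorem~A]{MULT_SAT_PERF_HT_2} or \cite[Theorem~A]{SAT_FIB_GOR_3} to $J_p=I_pR_{L_0}$. This works, but it forces you to check that (a) the Hilbert--Burch or Buchsbaum--Eisenbud resolution of $I_p$ descends with the same twists, and (b) $G_{r+1}$ for $I_p$ descends to $G_{d_p+1}$ for $J_p$. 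Part (a) follows because generic linear forms are regular on $R$ and on $R/I_p$. Part (b) is the piece you flag as an obstacle; note that the naive bound only gives $\mu(J_{p,\mathfrak q})\le \HT(\mathfrak q)+(r-d_p)$, which is not enough. The correct argument is via the Fitting (or $\mu$-)stratification: the locus $Z_j=\{\pp:\mu(I_{p,\pp})\ge j+1\}$ has codimension $\ge j+1$ in $\Spec(R)$ by $G_{r+1}$, and a generic linear section by $r-d_p$ forms preserves this codimension inside $\Spec(R_{L_0})$; since $\mu(J_{p,\mathfrak q})=\mu(I_{p,\pp})$ for $\pp$ the preimage of $\mathfrak q$, this yields $G_{d_p+1}$ for $J_p$. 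This is indeed standard prime-avoidance over an infinite field, but it is a genuine step that the paper avoids by quoting the projective-degree formulas as black boxes.

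In short: your reduction to the single-ideal graph is slicker than the paper's, while your final evaluation trades one citation for a Bertini verification that you correctly identify but should spell out along the lines above.
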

	\begin{proof}
		Due to \autoref{thm:main_result_sat_special_fiber_ring}, it suffices to compute the mixed multiplicities $e\big(\dd,\, \sfib(I_1,\ldots,I_{p-1},I_p)\big)$.
		The assumption of $I_1,\ldots,I_p$ being $\mm$-primary allows us to reduce to the case where $I_1=\cdots=I_{p-1} = \mm$.
		By~\autoref{lem_mm_prim_ideals}(i), we have the following relation of multigraded Hilbert polynomials 
		$$
		P_{\sfib(I_1,\ldots,I_{p-1}, I_p)}(n_1,\ldots,n_{p-1}, n_p) \;=\;  P_{\sfib(\mm,\ldots,\mm, I_p)}(n_1\delta_1, \ldots,n_{p-1}\delta_{p-1}, n_p)
		$$		
		for all $n_i \gg 0$.
		This implies that 
		$$
		e\big(\dd,\, \sfib(I_1,\ldots,I_{p-1},I_p)\big) \;=\;  \delta_1^{d_1}\cdots\delta_{p-1}^{d_{p-1}} \cdot e\big(\dd,\,  \sfib(\mm,\ldots,\mm,I_p)\big)
		$$
		for all $\dd = (d_1,\ldots,d_p) \in \NN^{p}$ with $|\dd| = r$.
		Therefore, for the rest of the proof, we assume that $I_1=\cdots =I_{p-1} =\mm$.
		
		(I)  
		We can assume that $\kk$ is an infinite field.
		By \autoref{thm_deg_graph_sat_fib}, we have that $e\big(\dd,\, \sfib(I_1,\ldots,I_{p-1},I_p)\big) = \deg_\DD^{(0,\dd)}(\Gamma)$.
		Since $I_1 = \cdots = I_p = \mm$, by applying \autoref{prop_cut_hyper} successively we have that 
		$$
		\deg_\DD^{(0,d_1,\ldots,d_{p-1},d_p)}(\Gamma) \; = \; e\big((0,0,\ldots,0,d_p), \, \Rees_S(J_1,\ldots,J_{p-1}, J_p)\big).
		$$
		Here $S = R / (l_1,\ldots,l_{s})R$, $s = d_1+\cdots+d_{p-1}$, each $l_i = \alpha_{i,0}x_0 + \cdots + \alpha_{i,r}x_r$ gives a general hyperplane $V(l_i) \subset \PP_\kk^r$, and $J_1 = \cdots = J_{p-1} = \mm S$, $J_p = I_p S$.
		Then \autoref{lem_mm_prim_ideals}(ii) gives 
		$$
		\deg_\DD^{(0,d_1,\ldots,d_{p-1},d_p)}(\Gamma) \; = \; e\big((0,0,\ldots,0,d_p), \, \Rees_S(J_1,\ldots,J_{p-1}, J_p)\big) \;=\; e\big((0,d_p), \, \Rees_S(J_p)\big).
		$$		
		By \cite[Proposition 5.6(i)]{MIXED_MULT}, we have $e\big((0,d_p), \, \Rees_S(J_p)\big) = e\big((s, d_p), \, \Rees(I_p)\big) = d_s(\GG_i)$, where $d_s(\GG_i)$ denotes the $s$-th projective degree of the rational map $\GG_i : \PP_\kk^r \dashrightarrow \PP_\kk^{m_p}$.
		Finally, by applying \cite[Theorem 5.7]{MIXED_MULT}, we obtain the formula 
		$$
		d_s(\GG_i) = e_{d_p}(\mu_1,\ldots,\mu_{m_p}),
		$$
		and so the result follows. 
		
		(II) The proof follows similarly to (I), however we use \cite[Theorem 5.8]{MIXED_MULT} instead of \cite[Theorem 5.7]{MIXED_MULT}.		
	\end{proof}

	\section{Monomial rational maps}\label{sec:monomial_rational_maps}
	
	In this section, we study the case of a monomial rational map. 
	We shall express the mixed multiplicities of the saturated special fiber ring in terms of mixed volumes of some naturally constructed polytopes.
	The following setup is used throughout this section. 
	
	\begin{setup}
		\label{setup_monomial}
		Assume \autoref{setup_rat_maps} and suppose that the polynomials $f_{i,k} \in I_i$ are monomials. 
		So, $I_1, \ldots, I_p \subset R$ are monomial ideals.
	\end{setup}
	
	First, we need to recall some important results and fix our notation. Let $\bK = (K_1,\ldots,K_p)$ be a sequence of convex bodies in $\RR^r$.
	For any sequence $\lambda = (\lambda_1,\ldots,\lambda_p) \in \NN^p$ of nonnegative integers, we denote by $\lambda \bK$ the Minkowski sum $\lambda \bK := \lambda_1 K_1+\cdots+\lambda_p K_p$ and by $\bK_\lambda$ the multiset $\bK_\lambda := \bigcup_{i=1}^p\bigcup_{j=1}^{\lambda_i} \{K_i\}$ of $\lambda_i$ copies of $K_i$ for each $1 \le i \le p$.
	Let us recall Minkowski's classic theorem, see~e.g.~\cite[Theorem 3.2, page 116]{EWALD}.
	
	\begin{theorem}[Minkowski]
		\label{thm_Minkowski}
		$\Vol_{r}(\lambda \bK)$ is a homogeneous polynomial of degree $r$.	
	\end{theorem}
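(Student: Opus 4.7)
The plan is to establish the result in two stages: first reduce to the case where each $K_i$ is a convex polytope, and then induct on the ambient dimension $r$.

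For the reduction, I would approximate each $K_i$ in the Hausdorff metric by a sequence of convex polytopes $K_i^{(n)}$. Since Minkowski sum and the volume functional are both continuous in this topology, and since a polynomial of bounded degree in $p$ variables is determined by its values on a finite set (for instance, on $\{0,1,\ldots,r\}^p$), a limit of homogeneous polynomials of degree $r$ whose coefficients vary continuously is again such a polynomial. Hence it suffices to prove the statement when every $K_i$ is a convex polytope.

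For polytopes, I would proceed by induction on $r$. The base case $r = 1$ is immediate, since Minkowski sums of intervals are intervals of additive length. For the inductive step I would use the standard surface decomposition
$$
\Vol_r(P) \;=\; \frac{1}{r}\sum_{F\text{ facet of }P} h_P(\nu_F)\,\Vol_{r-1}(F),
$$
applied to $P = \lambda\mathbf{K} = \lambda_1 K_1 + \cdots + \lambda_p K_p$. Two facts drive the induction: the support function is linear in $\lambda$, namely $h_P(\nu) = \sum_i \lambda_i h_{K_i}(\nu)$; and the facet of $P$ with outer normal $\nu$ is the Minkowski sum $\sum_i \lambda_i F_{K_i}(\nu)$ of the corresponding faces of the $K_i$, each lying in a translate of the hyperplane $\nu^\perp \cong \RR^{r-1}$. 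The induction hypothesis applied inside $\nu^\perp$ gives that each $\Vol_{r-1}(F)$ is a homogeneous polynomial of degree $r-1$ in $\lambda$, so each summand of the surface decomposition is homogeneous of degree $r$. The resulting global homogeneity of degree exactly $r$ is then consistent with (and forced by) the scaling identity $\Vol_r(t\lambda\mathbf{K}) = t^r \Vol_r(\lambda\mathbf{K})$.

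The main obstacle is that the combinatorial type of $P$, and in particular its set of facet normals, could a priori vary with $\lambda$, making the above formula only piecewise-polynomial on the chambers of the common refinement of the normal fans of the $K_i$. I would resolve this by the key observation that, for $\lambda$ in the open orthant $\RR_{>0}^p$, the normal fan of $P = \sum_i \lambda_i K_i$ coincides with the common refinement of the normal fans of $K_1,\ldots,K_p$ and is therefore independent of $\lambda$. The set of facet normals is constant on $\RR_{>0}^p$, so the surface decomposition produces a single polynomial formula on that open set. Continuity of $\Vol_r(\lambda\mathbf{K})$ on the closed orthant $\RR_{\geq 0}^p$ then propagates this polynomial identity to the boundary, and restricting to $\lambda \in \NN^p$ gives the statement of the theorem.
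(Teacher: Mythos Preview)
Your outline is a correct and essentially standard proof of Minkowski's theorem: approximate by polytopes, then induct on dimension via the facet decomposition, using that the normal fan of $\sum_i \lambda_i K_i$ is the common refinement of the individual normal fans for $\lambda \in \RR_{>0}^p$ and hence independent of $\lambda$. The argument is sound, including your handling of the only real subtlety (the apparent $\lambda$-dependence of the facet combinatorics).

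That said, the paper does not prove this theorem at all: it is quoted as a classical result with a reference to \cite[Theorem~3.2, page~116]{EWALD}. So there is no ``paper's own proof'' to compare against; your contribution here is simply supplying the textbook argument that the paper chose to cite rather than reproduce. What you have written matches the standard treatment in Ewald (and in Schneider), so nothing is lost and nothing new is gained relative to the cited source.
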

	We write the polynomial $\Vol_{r}(\lambda \bK)$ as
	$$
	\Vol_{r}(\lambda \bK) = \sum_{{\dd \in \NN^{p}\; \lvert \dd \rvert=r}}\; \frac{1}{\dd!}\,\MV_r(\bK_\dd) \, \lambda^\dd,
	$$
	where $\MV_r(-)$ denotes the \emph{mixed volume}.
	
	For a monomial ideal $J \in R$ with monomial generating set $G(J)$, we define the following lattice convex polytope 
	$$
	\Gamma(J) \, := \, \text{ConvexHull}\left( \big\lbrace
	(n_0,n_1,\ldots, n_r) \in \NN^r  \,\mid\, x_0^{n_0}x_1^{n_1}\cdots x_r^{n_r} \in G(J)
	\big\rbrace\right)  \, \subset \, \RR^{r+1}.
	$$
	We denote by $\pi : \RR^{r+1} \twoheadrightarrow \RR^r$ the natural projection given by $(x_0,x_1\ldots,x_r) \mapsto (x_1\ldots,x_r)$.
	We have the following important result that expresses the mixed multiplicities of $I_1,\ldots,I_p$ with respect to $\mm$ in terms of mixed volumes. 
	
	\begin{theorem}[{\cite[Theorem 2.4, Corollary 2.5]{TRUNG_VERMA_MIXED_VOL}}]
		\label{thm_vol_mult}
		Assume \autoref{setup_monomial}.
		Then, for all $\dd  = (d_1,\ldots,d_p) \in \NN^p$ with $|\dd| = r$, we have 
		$$
		e_{(0,\dd)}\left(\mm \mid I_1, \ldots, I_p \right) \,= \, \MV_r\left( {\Big(\pi(\Gamma(I_1)), \, \ldots,\, \pi(\Gamma(I_p))\Big)}_\dd \right).
		$$
	\end{theorem}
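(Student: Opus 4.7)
\medskip
\noindent
\emph{Proof proposal.}
The plan is to reduce the computation of the mixed multiplicity to a classical lattice-point-volume computation by exploiting that each $I_i$ is a monomial ideal equigenerated in a single degree $\delta_i$. By \autoref{lem_multdeg_mixed_mult}, the quantity $e_{(0,\dd)}(\mm\mid I_1,\ldots,I_p)$ equals the multidegree $\deg_\DD^{(0,\dd)}(\Gamma)$, which is (up to the standard factor $1/\dd!$) the coefficient of $n_1^{d_1}\cdots n_p^{d_p}$ in the top-degree homogeneous part of the $\NN^{p+1}$-graded Hilbert polynomial of $\Rees(I_1,\ldots,I_p)$ evaluated at bidegree $(n_1\delta_1+\cdots+n_p\delta_p,\ n_1,\ldots,n_p)$. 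Thus it suffices to compute the polynomial function
$$
F(n_1,\ldots,n_p) \,:=\, \lim_{n\to\infty}\frac{\dim_\kk \big[\, I_1^{nn_1}\cdots I_p^{nn_p}\,\big]_{n(n_1\delta_1+\cdots+n_p\delta_p)}}{n^r}
$$
and extract the coefficient of $n_1^{d_1}\cdots n_p^{d_p}$, then match it against the Minkowski expansion of $\Vol_r\big(n_1\pi(\Gamma(I_1))+\cdots+n_p\pi(\Gamma(I_p))\big)$ via \autoref{thm_Minkowski}.

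First, I would record that since $I_1^{n_1}\cdots I_p^{n_p}$ is a monomial ideal, its degree-$t$ piece has a $\kk$-basis given by the monomials $x^a$ with $|a|=t$ lying in the Newton polyhedron $n_1\Gamma(I_1)+\cdots+n_p\Gamma(I_p)+\RR_{\ge 0}^{r+1}$. Next, I would use the assumption $\deg f_{i,k}=\delta_i$ to observe that $\Gamma(I_i)$ lies in the affine hyperplane $\{x_0+\cdots+x_r=\delta_i\}$, so the projection $\pi:\RR^{r+1}\to\RR^r$ restricts to an affine isomorphism on each slice $\{|x|=c\}$; in particular, $\pi$ maps $\Gamma(I_i)$ injectively onto $\pi(\Gamma(I_i))\subset\RR^r$ and interacts well with Minkowski sums. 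Then the dimension count reduces to counting lattice points of $\NN^{r+1}$ in the slice of the Newton polyhedron at level $n_1\delta_1+\cdots+n_p\delta_p$, and after projecting by $\pi$ and rescaling by $n$, standard Ehrhart-type asymptotics identify the limit $F(n_1,\ldots,n_p)$ with $\Vol_r\big(n_1\pi(\Gamma(I_1))+\cdots+n_p\pi(\Gamma(I_p))\big)$. Extracting coefficients via \autoref{thm_Minkowski} would then yield the claimed mixed-volume formula.

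The main obstacle I would expect is the rigorous control of the contribution coming from the unbounded orthant summand $\RR_{\ge 0}^{r+1}$ in the Newton polyhedron. The projected slice is not simply the Minkowski sum $n_1\pi(\Gamma(I_1))+\cdots+n_p\pi(\Gamma(I_p))$, but rather this sum together with tails arising from the coordinate directions, and one needs to argue that these tails either coincide with (or else contribute trivially to) the top-degree volume. The cleanest way to handle this is probably a perturbation argument: replace each $I_i$ by an $\mm$-primary monomial companion such as $I_i+\mm^{N}$ for $N\gg\delta_i$, use the classical identification of mixed multiplicities of $\mm$-primary monomial ideals with mixed volumes of truncated Newton polyhedra (for instance via Teissier–Rees–Sharp), and then show that the effect of the truncation on the $(0,\dd)$-coefficient vanishes in the limit $N\to\infty$. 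An alternative, more direct route is to exploit the equigenerated structure to see that the affine slice of the Newton polyhedron at the distinguished level $n_1\delta_1+\cdots+n_p\delta_p$ is already a bounded polytope whose projection is precisely the desired Minkowski sum, in which case the tail issue never arises.
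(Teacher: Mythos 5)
First, a remark on scope: the paper does not prove \autoref{thm_vol_mult} at all; it is imported verbatim from Trung--Verma \cite{TRUNG_VERMA_MIXED_VOL}, so your sketch has to stand as a self-contained proof, and as written it has two genuine gaps which happen to be off by the same factor $\deg(\GG)$ and therefore cancel in the final formula without either intermediate step being correct. (1) A monomial ideal does not contain every monomial whose exponent vector lies in its Newton polyhedron; those monomials span the \emph{integral closure}. Consequently $\dim_\kk\big[\mathbf{I}^{n\bn}\big]_{n\bn\cdot\delta}$ is not a lattice-point count for $\sum_i nn_i\,\pi(\Gamma(I_i))$: it counts sums of $nn_i$ exponent vectors of the generators, and these sumsets miss lattice points in a way that survives the limit. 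For $I_1=(x^2,y^2,z^2)$ as in \autoref{example:classical_rational_maps} one has $\dim_\kk[I_1^k]_{2k}=\binom{k+2}{2}\sim k^2/2$, whereas the number of lattice points in $k\,\pi(\Gamma(I_1))$ grows like $2k^2$; the discrepancy is exactly $\deg(\GG_1)=4$, the index of the lattice generated by the exponent vectors. (2) The multidegree $\deg_\DD^{(0,\dd)}(\Gamma)$ cannot be read off from the Hilbert \emph{function} of $\Rees(I_1,\ldots,I_p)$ along $c=0$: your $F$ is the normalized asymptotic Hilbert function of the special fiber $\FF(I_1,\ldots,I_p)$, whose top coefficients give $\deg_\PP^{\dd}(Y)$, not $e_{(0,\dd)}(\mm\mid I_1,\ldots,I_p)$. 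The Hilbert polynomial agrees with the Hilbert function only for $(c,\bn)\gg\mathbf{0}$, and the defect at $c=0$ is governed by ${\left[\HL^1(\Rees(I_1,\ldots,I_p))\right]}_0$; this is precisely why the paper needs \autoref{thm_deg_graph_sat_fib}, which identifies $\deg_\DD^{(0,\dd)}(\Gamma)$ with $e\big(\dd,\sfib(I_1,\ldots,I_p)\big)=\deg(\GG)\cdot\deg_\PP^\dd(Y)$, i.e.\ with the \emph{saturated} special fiber. In the same example $e_{(0,2)}(\mm\mid I_1)=e(\mm,I_1,I_1)=4$ (cut with a general hyperplane: the image of $I_1$ in $\kk[x,y]$ has multiplicity $4$), while $\dd!$ times the top coefficient of your $F$ is $1$.

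A correct direct argument must keep the $\mm$-direction in play: compute the Hilbert function of $T(\mm\mid I_1,\ldots,I_p)$ itself, counting monomials of degree $n_0+\bn\cdot\delta$ in $\mm^{n_0}\mathbf{I}^{\bn}$ with $n_0\to\infty$ as well as $\bn$; the factor $\mm^{n_0}$ fills in the gaps of the sumset, the relevant polytope becomes $n_0\Delta+\sum_i n_i\Gamma(I_i)$ with $\Delta$ the standard simplex, and only after the polynomial identity is established on the full cone $(n_0,\bn)\gg\mathbf{0}$ may one legitimately extract the $d_0=0$ coefficient. This is, in essence, how Trung and Verma prove the cited result. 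Your fallback of replacing $I_i$ by $I_i+\mm^N$ does not sidestep the difficulty, since proving that the $(0,\dd)$-coefficients converge as $N\to\infty$ is again the heart of the matter. Your closing ``more direct route'' does correctly dispose of the orthant-tail worry --- since each $\Gamma(I_i)$ lies in the hyperplane of degree $\delta_i$, the slice of the Newton polyhedron at level $\bn\cdot\delta$ is exactly the bounded polytope $\sum_i n_i\Gamma(I_i)$ --- but that was never the real obstacle.
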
	
	
	It should be mentioned that this result was generalized in \cite{cid2020convex, cid2021multigraded} for not-necessarily-Noetherian graded families of ideals.
	
	The following theorem is the main result of this section. 
	
	\begin{theorem} \label{thm:monomial_degree_mixedVol}
		Assume \autoref{setup_monomial}.
		Then, for all $\dd = (d_1,\ldots,d_p) \in \NN^p$ with $|\dd| = r$, we have 
		$$
		\deg(\GG) \cdot \deg_\PP^\dd(Y) \,= \, \MV_r\left( {\Big(\pi(\Gamma(I_1)), \, \ldots,\, \pi(\Gamma(I_p))\Big)}_\dd \right).
		$$
	\end{theorem}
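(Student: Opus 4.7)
The plan is to chain together four results already established in the paper. Under \autoref{setup_monomial}, suppose first that $\GG$ is generically finite. Then \autoref{thm:main_result_sat_special_fiber_ring}(i) rewrites the left hand side as a mixed multiplicity of the saturated special fiber ring,
$$
\deg(\GG) \cdot \deg_\PP^\dd(Y) \;=\; e\big(\dd,\, \sfib(I_1,\ldots,I_p)\big),
$$
and \autoref{thm_deg_graph_sat_fib} identifies the latter with the multidegree of the graph,
$$
e\big(\dd,\, \sfib(I_1,\ldots,I_p)\big) \;=\; \deg_\DD^{(0,\dd)}(\Gamma).
$$

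Next, I would apply \autoref{lem_multdeg_mixed_mult} to convert the multidegree of the graph into a mixed multiplicity of the ideals $I_1,\ldots,I_p$ with respect to $\mm$:
$$
\deg_\DD^{(0,\dd)}(\Gamma) \;=\; e_{(0,\dd)}\big(\mm \mid I_1,\ldots,I_p\big).
$$
At this point we have used only the general machinery; the hypothesis that the $f_{i,k}$ are \emph{monomials} enters precisely through \autoref{thm_vol_mult} of Trung--Verma, which yields
$$
e_{(0,\dd)}\big(\mm \mid I_1,\ldots,I_p\big) \;=\; \MV_r\left( \big(\pi(\Gamma(I_1)),\, \ldots,\, \pi(\Gamma(I_p))\big)_\dd \right).
$$
Concatenating these four equalities gives the claimed formula.

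It remains to dispose of the case where $\GG$ is not generically finite, so that by convention $\deg(\GG)=0$ and the left hand side vanishes. Here I would observe that the last three equalities in the chain above do not require generic finiteness, so they still yield $e(\dd,\sfib(I_1,\ldots,I_p)) = \MV_r((\pi(\Gamma(I_i)))_\dd)$. Since $\sfib(I_1,\ldots,I_p)$ is a finitely generated graded module over $\FF(I_1,\ldots,I_p)$ and the latter has Krull dimension $\ell(I_1\cdots I_p) = \dim(Y)+1 < r+1$, its multigraded Hilbert polynomial has total degree strictly less than $r$, so $e(\dd,\sfib(I_1,\ldots,I_p))=0$ for every $\dd$ with $\lvert\dd\rvert=r$. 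Thus both sides of the stated equality vanish, and the formula remains valid. There is no genuine obstacle in the argument; the substance of the theorem lies entirely in the dictionary provided by \autoref{thm:main_result_sat_special_fiber_ring} together with the Trung--Verma volume formula, so the only care needed is verifying that \autoref{thm_vol_mult} applies to the specific mixed multiplicity $e_{(0,\dd)}(\mm \mid I_1,\ldots,I_p)$ appearing in our chain (i.e., that $d_0 = 0$ is an allowed index), which is exactly the case treated in that result.
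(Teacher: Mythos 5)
Your proposal is correct and takes essentially the same route as the paper, which proves the theorem by exactly this chain: \autoref{thm:main_result_sat_special_fiber_ring}, then \autoref{thm_deg_graph_sat_fib}, then \autoref{lem_multdeg_mixed_mult}, then \autoref{thm_vol_mult}. Your extra paragraph handling the non--generically-finite case (which the paper leaves implicit) is sound in substance, with one small slip: the Krull dimension of $\FF(I_1,\ldots,I_p)$ is $\dim(Y)+p$, not $\ell(I_1\cdots I_p)=\dim(Y)+1$; what your argument actually needs is only that the multigraded Hilbert polynomial of $\sfib(I_1,\ldots,I_p)$ has degree $\dim(Y)<r$, which still holds, so the vanishing conclusion stands.
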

	\begin{proof}
		The result follows directly by combining \autoref{thm:main_result_sat_special_fiber_ring}, \autoref{thm_deg_graph_sat_fib}, \autoref{lem_multdeg_mixed_mult} and \autoref{thm_vol_mult}.
		More precisely, we obtain the following sequence of equalities
		\begin{align*}
			\deg(\GG) \cdot \deg_\PP^\dd(Y) &= e\big(\dd, \sfib(I_1,\ldots,I_p)\big) \\
			&= \deg_\DD^{(0,\dd)}(\Gamma)\\
			&= e_{(0,\dd)}\left( \mm \mid I_1, \ldots, I_p \right) \\
			&= \MV_r\left( {\big(\pi(\Gamma(I_1)), \, \ldots,\, \pi(\Gamma(I_p))\big)}_\dd \right)
		\end{align*}
		for all $\dd \in \NN^p$ with $|\dd|=r$.
	\end{proof}

	We now illustrate the above result with three simple but instructive examples.

	\begin{example}\label{example:classical_rational_maps}
		Consider the following classical rational maps
		\[
		\GG_1 : \PP_\kk^2 \dashrightarrow \PP_\kk^2, \quad (x, y, z) \mapsto (x^2, y^2, z^2) \quad
		\textrm{ and }\quad
		\GG_2 : \PP_\kk^2 \dashrightarrow \PP_\kk^2, \quad  (x, y, z) \mapsto (xy, xz, yz) 
		\]
		and denote their corresponding base ideals $I_1 = (x^2, y^2, z^2) $ and $I_2 = (xy, xz, yz) $. Let $\pi$ be the map that projects onto the $x$ and $y$ coordinates. 
		The polytopes $\Gamma(I_1)$ and $\Gamma(I_2)$ are depicted in \autoref{fig:classical_polytope_projections} along with their projections.

		\begin{figure}[H]
			\centering
			\includegraphics[scale  = 0.8]{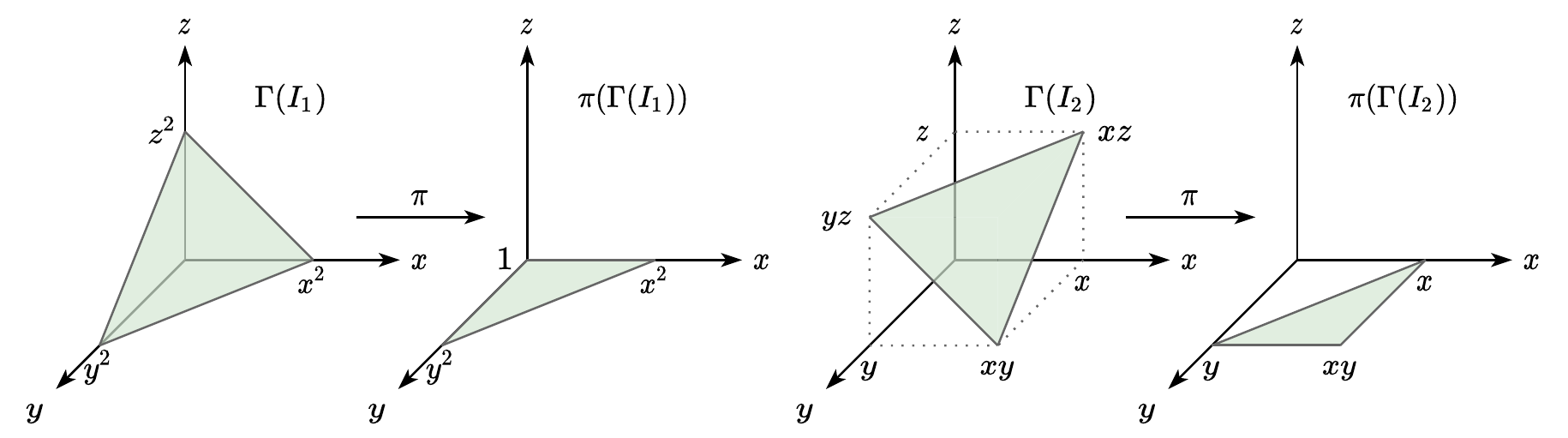}
			\caption{The polytopes associated to the ideals in \autoref{example:classical_rational_maps} and their projections.}
			\label{fig:classical_polytope_projections}
		\end{figure}
		
		\noindent
		Since the image $Y = \PP_\kk^2$ of $\GG_1$ has degree $1$,  by \autoref{thm:monomial_degree_mixedVol}
		we have that
		\[
		\deg(\GG_1) = \MV_2\left(\pi(\Gamma(I_1), \pi(\Gamma(I_1)) = 2! \Vol_2(\pi(\Gamma(I_1))\right) = 4.
		\]
		Similarly, for the rational map $\GG_2$, by  \autoref{thm:monomial_degree_mixedVol} we have
		\[
		\deg(\GG_2) = \MV_2\left(\pi(\Gamma(I_2), \pi(\Gamma(I_2)\right) = 2! \Vol_2(\pi(\Gamma(I_2))) = 1.
		\]
		We note that the equalities $\deg(\GG_1) = 4$ and $\deg(\GG_2) = 1$ are also confirmed by \autoref{thm:zero_base_locus_degree_formula}, since $\GG_1$ has no base points and $\GG_2$ has three base points.
	\end{example}
	
	\begin{example}\label{example_P2}
		Let us consider the following rational map
		\[
		\GG : \PP_\kk^2 \dashrightarrow \PP_\kk^2\times \PP_\kk^2, \qquad  (x, y, z) \mapsto ((x,y,z), (x^3, yz^2, xy^2))
		\]
		which we note has degree $1$. 
		Let $I_1 = (x,y,z)$ and $I_2 = (x^3, yz^2, xy^2)$ be the corresponding base ideals. 
		Let $\pi$ be the map that projects onto the $x$ and $y$ coordinates. 
		In \autoref{fig:P2_polytopes} we depict the polytopes $\pi(\Gamma(I_1))$ and $\pi(\Gamma(I_2))$ along with their Minkowski sum.
		
		\begin{figure}[H]
			\centering
			\includegraphics[scale = 0.8]{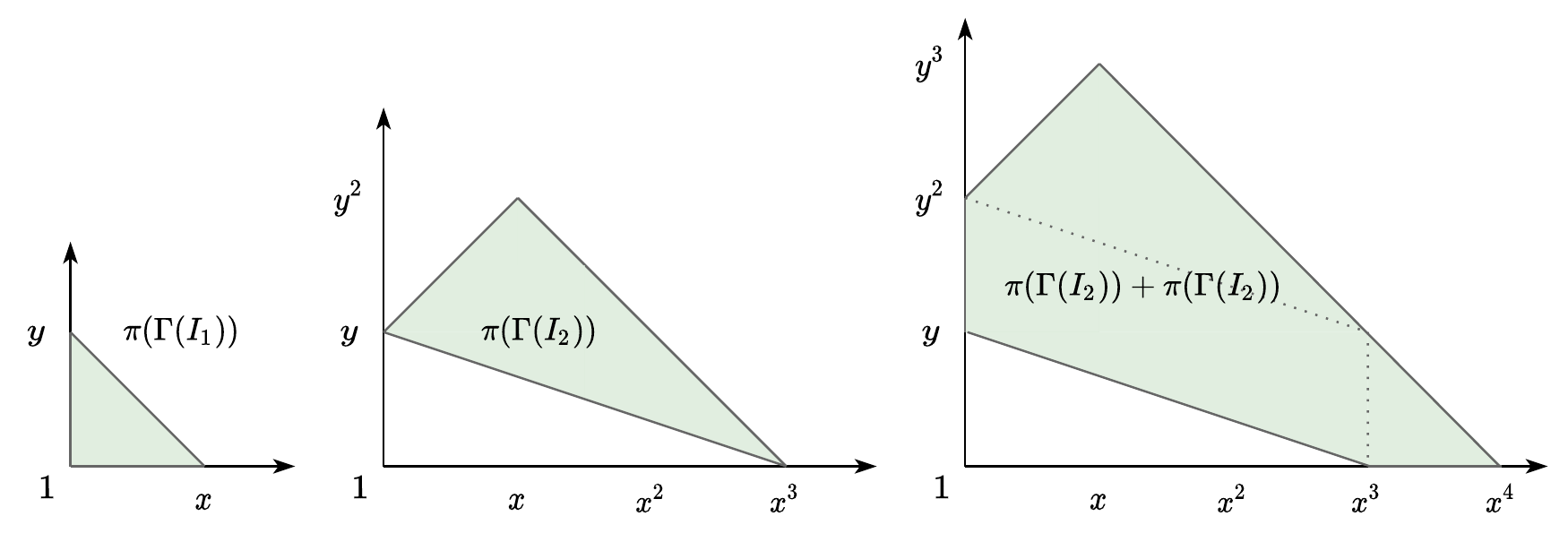}
			\caption{The polytopes $\pi(\Gamma(I_1))$ and $\pi(\Gamma(I_2))$ from \autoref{example_P2} and their Minkowski sum. The depiction of the Minkowski sum (right) contains disjoint copies of the polytopes $\pi(\Gamma(I_1))$ and $\pi(\Gamma(I_2))$ indicated by the faint dotted lines.}
			\label{fig:P2_polytopes}
		\end{figure}
		
		\noindent
		By the definition of the mixed volume, we have \[
		\MV_2\left(\pi(\Gamma(I_1)), \pi(\Gamma(I_2))\right) = 
		\Vol_2\left(\pi(\Gamma(I_1) + \pi(\Gamma(I_2))\right) - \Vol_2\left(\pi(\Gamma(I_1))\right) - \Vol_2\left(\pi(\Gamma(I_2))\right)
		= \frac{11}{2} - \frac{1}{2} - 2 = 3.
		\]
		And so, by \autoref{thm:monomial_degree_mixedVol}, we have that the multidegrees of the image of $\GG$ are given by 
		\[
		\deg_\PP^{(2,0)}(Y) = 1, \quad 
		\deg_\PP^{(0,2)}(Y) = 4 \quad \text{ and } \quad
		\deg_\PP^{(1,1)}(Y) = 3.
		\]
	\end{example}

	\begin{example}\label{example_P3}
		Let us consider the following rational map
		\[
		\GG : \PP_\kk^3 \dashrightarrow \PP_\kk^3\times \PP_\kk^3, \qquad  (x, y, z, w) \mapsto ((x,y,z,w),(xy^2, xw^2, yzw, z^2w)).
		\]
		Let $I_1 = (x,y,z,w)$ and $I_2 = (xy^2, xw^2, yzw, z^2w)$ be the corresponding base ideals. 
		Let $\pi$ be the map that projects onto the $x$, $y$ and $z$ coordinates. 
		The polytopes $\pi(\Gamma(I_1))$ and $\pi(\Gamma(I_2))$ and the Minkowski sums $\pi(\Gamma(I_1)) + \pi(\Gamma(I_2))$ and $2\pi(\Gamma(I_1)) + \pi(\Gamma(I_1))$ are depicted in \autoref{fig:P3_example_polytopes}. 
		For ease of notation, let us write $A = \pi(\Gamma(I_1))$ and $B = \pi(\Gamma(I_2))$. 
		The volumes of the polytopes are
		\[
		\Vol_3(A) = \frac{1}{6},  \quad  \Vol_3(B) = \frac{1}{3}, \quad
		\Vol_3(A + B) = \frac{9}{2} \quad \text{and} \quad
		\Vol_3(2 A + B) = \frac{56}{3}.
		\]

		\begin{figure}[h]
			\centering
			\includegraphics[scale = 0.6]{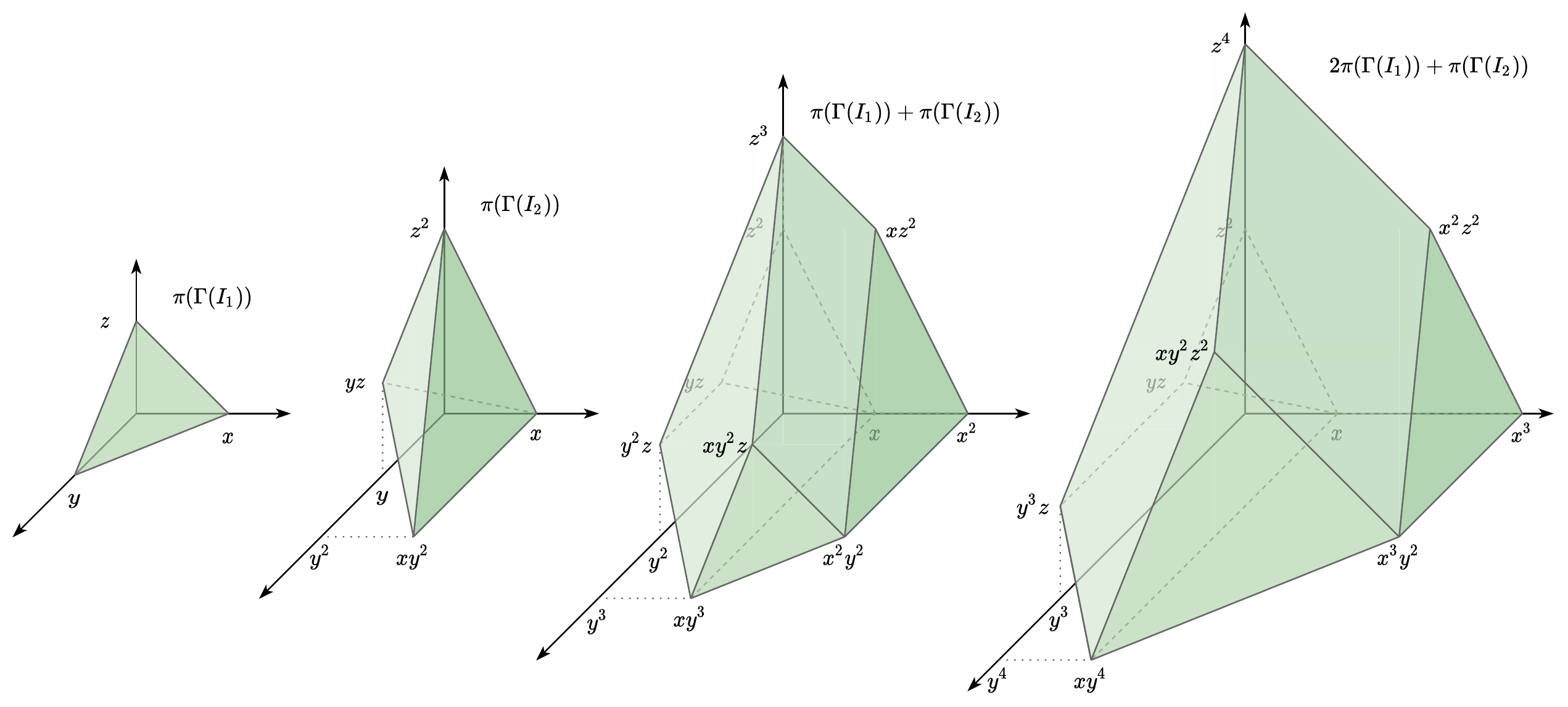}
			\caption{The polytopes $\pi(\Gamma(I_1))$ and $\pi(\Gamma(I_2))$ and their Minkowski sums from \autoref{example_P3}.}
			\label{fig:P3_example_polytopes}
		\end{figure}
		
		\noindent
		By definition of mixed volumes, we have 
		\[
		\Vol_3(A + B) = \Vol_3(A) + \Vol_3(B) +
		\frac 1 2 \MV_3(A, A, B) + 
		\frac 1 2 \MV_3(A, B, B)
		\]
		\[
		\Vol_3(2A + B) = 8 \Vol_3(A) + \Vol_3(B) +
		4 \MV_3(A, A, B) + 
		\MV_3(A, B, B)
		\]
		Solving these equations, we find that $\MV_3(A,A,B) = 3$ and $\MV_3(A,B,B) = 5$. It is straightforward to see that $\deg(\GG) = 1$. And so, by \autoref{thm:monomial_degree_mixedVol} we have 
		\[
		\deg_\PP^{(3,0)}(Y) = 1, \quad
		\deg_\PP^{(2,1)}(Y) = 3, \quad
		\deg_\PP^{(1,2)}(Y) = 5, \quad \text{and} \quad
		\deg_\PP^{(0,3)}(Y) = 2.
		\]

	\end{example}
	
	\section{Application / Interpretation}\label{sec:applications}
	
	The problem of \textit{implicitization} is fundamental in the area of Computer Aided Geometric Design (CAGD). 
	The main problem, in the case of curves, can be stated as follows. Let $a_0,a_1,a_2 \in \kk[t]$ be polynomials, then $(x,y) := (a_1/a_0, a_2/a_0)$ is a rational parametric curve in $\kk^2$. 
	The implicitization of the curve is a polynomial in $x$ and $y$ which carves out this curve. 
	This problem is studied in \cite{COX_EQ_PARAM, cox1998moving_line} using a collection of generators of the syzygy module of $I = (a_0x - a_1, a_0y - a_2) \subseteq \kk[x,y,t]$ called a $\mu$-basis. 
	This setup can be translated into the language used throughout this paper by homogenizing. Let $d = \max\{\deg(a_0), \deg(a_1), \deg(a_2) \}$ and, for each $i \in \{0,1,2\}$, if $a_i = \sum_i \alpha_i t^i$ then define $\overline{a_i} = \sum_i \alpha_i t^i s^{d - i} \in \kk[s,t]$. 
	The parametric curve is the image of the rational map $\GG : \PP_\kk^1 \dashrightarrow \PP_\kk^2$ given by $\GG(s,t) = (\overline{a_0}, \overline{a_1}, \overline{a_2})$, restricted to a particular affine patch. 
	
	The concept of a $\mu$-basis naturally extends to higher dimensions. 
	For example, $\mu$-bases for surfaces $\GG : \PP_\kk^2 \dashrightarrow \PP_\kk^3 : (s,t) \mapsto (a,b,c,d)$ are studied in \cite[Section~5]{COX_EQ_PARAM}. 
	The $\mu$-basis allows us to determine properties about the parametric surface, i.e.~the image of $\GG$. 
	Under particular assumptions, Cox computes in \cite[Proposition~5.3]{COX_EQ_PARAM} that the degree of the image of $\GG$ can be determined completely in terms of the degrees of a $\mu$-basis. Explicitly, Cox requires that $\deg(\GG) = 1$ and that the base locus $V(a,b,c,d) \subseteq \PP_\kk^2$ of the rational map is a local complete intersection. As a result, the formula for the degree is $\deg_{\PP_\kk^3}(Y) = \mu_1 \mu_2 + \mu_1 \mu_3 + \mu_2 \mu_3 = e_2(\mu_1, \mu_2, \mu_3)$, where the $\mu$-basis of $(a,b,c,d)$ has degrees $\mu_1, \mu_2, \mu_3$. 
	In more recent work, \cite[Theorem~A]{MULT_SAT_PERF_HT_2} gives a generalization of this result.  
	It is shown that, for a rational map $\GG : \PP_\kk^r \dashrightarrow \PP_\kk^s$ determined by a perfect base ideal of height two, the degree of the image times the degree of the map equals an elementary symmetric polynomial evaluated at the degrees of the $\mu$-basis. The main tool used in the proof of this formula is the saturated special fiber ring, see \autoref{sec:saturated_special_fiber_ring}.
	
	The study of $\mu$-bases has played an important role in a number of applications. As mentioned above, they appear in the implicitization problem of parametric surfaces and curves.  
	But, they are also an important tool for detecting singularities and in studying the degree and birationality of rational maps. 
	They are nowadays an essential tool in many developments in CAGD, see~e.g.~\cite{survey_mu_basis, CHEN_SEDERB_IMP, CHEN_WANG_YANG_SING, GOLDMAN_JIA, MU_BASIS_RULED_SURF, SEDERBERG20151, SEDERBERG20161}.
	
	More generally, in a somehow parallel story, $\mu$-bases form part of the general syzygy-based approach for studying rational maps. This method appears to have been originally initiated in \cite{HULEK_KATZ_SCHREYER_SYZ}. In this paper, Hulek, Katz and Schreyer give a characterization of when a map is a Cremona transformation in terms of properties of the syzygies. The use of syzygies in studying rational maps has become an active and particularly fruitful research area, see~e.g.~\cite{AB_INITIO, Simis_cremona,KPU_blowup_fibers,EISENBUD_ULRICH_ROW_IDEALS,Hassanzadeh_Simis_Cremona_Sat,SIMIS_RUSSO_BIRAT,EFFECTIVE_BIGRAD,SIMIS_PAN_JONQUIERES,HASSANZADEH_SIMIS_DEGREES, MULTPROJ, MULT_SAT_PERF_HT_2, SAT_FIB_GOR_3, MIXED_MULT, BOTBOL_ALICIA_RAT_SURF, BOTBOL_ALICIA_MAT, KPU_BIGRAD_STRUCT, KPU_GOR3, KPU_NORMAL_SCROLL, CARLOS_MONO, CARLOS_MONOID, CARLOS_MU2}.

	In this paper, we further extend the syzygy-based approach (a.k.a $\mu$-bases) by considering a family of rational maps which has, until this point, been mostly unobserved. 
	We are interested in rational maps from a projective variety to a multiprojective variety, typically, a rational map of the form $\GG : \PP_\kk^r \dashrightarrow \PP_\kk^{m_1} \times_\kk \cdots \times_\kk \PP_\kk^{m_p}$.
	It should be mentioned that rational maps from a multiprojective variety to a projective variety (i.e. a rational map typically of the form $\mathcal{H} : \PP_\kk^{m_1} \times_\kk \cdots \times_\kk \PP_\kk^{m_p} \dashrightarrow \PP_\kk^r $) are an important gadget in the field of Geometric Modeling, see~e.g.~\cite{EFFECTIVE_BIGRAD, SEDERBERG20151, SEDERBERG20161}. 
	We define and use a \emph{multigraded generalization of the saturated special fiber ring} that allows us to obtain important results regarding the \emph{(nonlinear) multiview variety} given as the image of $\GG$.  
	
	Our work is motivated by the study of multiview varieties that were considered in \cite{aholt2013hilbert} and in \cite{agarwal2019ideals}. 
	The classical multiview variety is the image of a linear rational map $\GG : \PP_\kk^3 \dashrightarrow \PP_\kk^2 \times \cdots \times \PP_\kk^2$. 
	Each component of this map $\GG_i : \PP_\kk^3 \dashrightarrow \PP_\kk^2$ is called a \textit{(pinhole) camera} and is given by a full-rank $3 \times 4$ matrix $A_i$. This setup naturally arises in the study of computer vision, and the multiview variety is the closure of the image $Y$ of $\GG$. 
	In general, the defining ideal $J$ of $Y$ is a Cartwright-Sturmfels ideal, and so it has many desirable properties, see~\cite{conca2018cartwright, conca2021radical}.
	The kernel of $A_i$ is a single point in $\PP_\kk^3$ and the base locus of $\GG$ is the collection of these points. 
	In the case that these points are distinct, by \cite[Theorem~3.7]{aholt2013hilbert}, we have an explicit description for the multigraded Hilbert function of $J$. 
	This result follows from explicitly computing the generic initial ideal 
	of the defining ideal $J$ of $Y$. 
	Our primary goal is to show that similar relationships hold even in the nonlinear case. 
	In particular, we focus on the highest degree coefficients of the multigraded Hilbert polynomial of $J$, which are the multidegrees of the multiprojective variety $Y$.
	
	To study the ideal of the nonlinear multiview variety, see \autoref{setup_rat_maps}, it is not possible to take the same approach as \cite{aholt2013hilbert} since the generic initial ideals are not radical. 
	Instead, we are able to compute the multidegrees of the image of $\GG$ using the saturated special fiber ring. 
	We show in \autoref{thm:main_result_sat_special_fiber_ring} that our multigraded version of the saturated special fiber ring maintains similar important properties (cf. \cite[Theorem 2.4]{MULTPROJ}).
	Similarly to \cite[Theorem~3.7]{aholt2013hilbert}, we show in \autoref{thm:zero_base_locus_degree_formula} that the multidegrees of $Y$ are related to the mixed multiplicities of the base points.
	In \autoref{sec:linear_rational_maps}, we recover several known results in the setting of the classical linear multiview varieties (see \autoref{thm:multiplicity_free_linear_case}, \autoref{cor_eq_sat_fib_fib}, \autoref{cor:dim_zero_base_linear}). 
	In \autoref{sec:perfect_ht2_gorenstein_ht3}, we provide explicit formulas for several interesting families of rational maps. 
	The techniques used in \autoref{sec:perfect_ht2_gorenstein_ht3} convey the depth and scalability of the saturated special fiber ring to study rational maps. 
	We also prove some technical results that might be interesting in their own right, see \autoref{thm_deg_graph_sat_fib}, \autoref{prop_cut_hyper} and  \autoref{lem_mm_prim_ideals}.
	In \autoref{sec:monomial_rational_maps}, we focus on the case where the rational map $\GG$ is given by monomials, i.e.~the ideals $I_1, \dots, I_p$ are monomial ideals. 
	Using convex geometry, we relate the multidegrees of the image with the mixed volumes of the associated Newton polytopes of the ideals $I_i$ after dehomogenizing. 
	
	\smallskip
	\noindent	\textbf{Future directions.} In this paper, we have computed for large families of rational maps, the multidegrees of their image. 
	Following this, it is natural to ask how we may compute the other coefficients of the multigraded Hilbert polynomial. 
	In particular, we ask whether these coefficients can be determined from the points in the base locus analogously to \autoref{thm:zero_base_locus_degree_formula}.
	Of course, it is also quite desirable to study other families of nonlinear multiview varieties. And so, we ask for which families of multiview varieties is possible to find explicit formulas for their multidegrees.

\begin{bibdiv}
\begin{biblist}

\bib{agarwal2019ideals}{article}{
      author={Agarwal, Sameer},
      author={Pryhuber, Andrew},
      author={Thomas, Rekha~R.},
       title={Ideals of the multiview variety},
        date={2019},
     journal={IEEE Transactions on Pattern Analysis and Machine Intelligence},
}

\bib{aholt2013hilbert}{article}{
      author={Aholt, Chris},
      author={Sturmfels, Bernd},
      author={Thomas, Rekha},
       title={A {H}ilbert scheme in computer vision},
        date={2013},
     journal={Canad. J. Math.},
      volume={65},
      number={5},
       pages={961\ndash 988},
}

\bib{EFFECTIVE_BIGRAD}{article}{
      author={Botbol, Nicol\'as},
      author={Bus\'e, Laurent},
      author={Chardin, Marc},
      author={Hassanzadeh, Seyed~Hamid},
      author={Simis, Aron},
      author={Tran, Quang~Hoa},
       title={Effective criteria for bigraded birational maps},
        date={2017},
     journal={J. Symbolic Comput.},
      volume={81},
       pages={69\ndash 87},
}

\bib{BOTBOL_ALICIA_RAT_SURF}{article}{
      author={Botbol, Nicol\'{a}s},
      author={Dickenstein, Alicia},
       title={Implicitization of rational hypersurfaces via linear syzygies: a
  practical overview},
        date={2016},
     journal={J. Symbolic Comput.},
      volume={74},
       pages={493\ndash 512},
}

\bib{BOTBOL_ALICIA_MAT}{article}{
      author={Botbol, Nicol\'{a}s},
      author={Dickenstein, Alicia},
      author={Dohm, Marc},
       title={Matrix representations for toric parametrizations},
        date={2009},
     journal={Comput. Aided Geom. Design},
      volume={26},
      number={7},
       pages={757\ndash 771},
}

\bib{breiding2021nonlinear}{article}{
      author={Breiding, Paul},
      author={{\c{C}}elik, T{\"u}rk{\"u}~{\"O}zl{\"u}m},
      author={Duff, Timothy},
      author={Heaton, Alexander},
      author={Maraj, Aida},
      author={Sattelberger, Anna-Laura},
      author={Venturello, Lorenzo},
      author={Y{\"u}r{\"u}k, O{\u{g}}uzhan},
       title={Nonlinear algebra and applications},
        date={2021},
     journal={arXiv preprint arXiv:2103.16300},
}

\bib{brion2002multiplicity}{incollection}{
      author={Brion, Michel},
       title={Multiplicity-free subvarieties of flag varieties},
        date={2003},
   booktitle={Commutative algebra ({G}renoble/{L}yon, 2001)},
      series={Contemp. Math.},
      volume={331},
   publisher={Amer. Math. Soc., Providence, RI},
       pages={13\ndash 23},
         url={https://doi.org/10.1090/conm/331/05900},
      review={\MR{2011763}},
}

\bib{MULTPROJ}{article}{
      author={Bus{\'e}, Laurent},
      author={Cid-Ruiz, Yairon},
      author={{D}'Andrea, Carlos},
       title={Degree and birationality of multi-graded rational maps},
        date={2020},
     journal={Proceedings of the London Mathematical Society},
      volume={121},
      number={4},
       pages={743\ndash 787},
}

\bib{Laurent_Jouanolou_Closed_Image}{article}{
      author={Bus\'e, Laurent},
      author={Jouanolou, Jean-Pierre},
       title={On the closed image of a rational map and the implicitization
  problem},
        date={2003},
     journal={J. Algebra},
      volume={265},
      number={1},
       pages={312\ndash 357},
}

\bib{GEN_FREENESS_LOC_COHOM}{article}{
      author={Chardin, Marc},
      author={Cid-Ruiz, Y.},
      author={Simis, Aron},
       title={Generic freeness of local cohomology and graded specialization},
        date={2020},
     journal={to appear in Trans. Amer. Math. Soc.},
        note={arXiv:2002.12053},
}

\bib{CHEN_SEDERB_IMP}{article}{
      author={Chen, Falai},
      author={Sederberg, Tom},
       title={A new implicit representation of a planar rational curve with
  high order singularity},
        date={2002},
        ISSN={0167-8396},
     journal={Comput. Aided Geom. Design},
      volume={19},
      number={2},
       pages={151\ndash 167},
}

\bib{CHEN_WANG_YANG_SING}{article}{
      author={Chen, Falai},
      author={Wang, Wenping},
      author={Liu, Yang},
       title={Computing singular points of plane rational curves},
        date={2008},
        ISSN={0747-7171},
     journal={J. Symbolic Comput.},
      volume={43},
      number={2},
       pages={92\ndash 117},
}

\bib{MU_BASIS_RULED_SURF}{article}{
      author={Chen, Falai},
      author={Zheng, Jianmin},
      author={Sederberg, Thomas~W.},
       title={The mu-basis of a rational ruled surface},
        date={2001},
        ISSN={0167-8396},
     journal={Comput. Aided Geom. Design},
      volume={18},
      number={1},
       pages={61\ndash 72},
}

\bib{MULT_SAT_PERF_HT_2}{article}{
      author={{Cid-Ruiz}, Yairon},
       title={Multiplicity of the saturated special fiber ring of height two
  perfect ideals},
        date={2018-07},
     journal={to appear in Proc. Amer. Math. Soc.},
        note={arXiv:1807.03189},
}

\bib{cid2019blow}{thesis}{
      author={Cid~Ruiz, Yairon},
       title={Blow-up algebras in {A}lgebra, {G}eometry and {C}ombinatorics},
        type={Ph.D. Thesis},
        date={2019},
}

\bib{MIXED_MULT}{article}{
      author={Cid-Ruiz, Yairon},
       title={Mixed multiplicities and projective degrees of rational maps},
        date={2021},
     journal={J. Algebra},
      volume={566},
       pages={136\ndash 162},
}

\bib{cid2021multigraded}{article}{
      author={Cid-Ruiz, Yairon},
      author={Mohammadi, Fatemeh},
      author={Monin, Leonid},
       title={Multigraded algebras and multigraded linear series},
        date={2021},
     journal={arXiv preprint arXiv:2104.05397},
}

\bib{cid2020convex}{article}{
      author={Cid-Ruiz, Yairon},
      author={Monta{\~n}o, Jonathan},
       title={Convex bodies and graded families of monomial ideals},
        date={2020},
     journal={arXiv preprint arXiv:2010.07918},
}

\bib{SAT_FIB_GOR_3}{article}{
      author={Cid-Ruiz, Yairon},
      author={Mukundan, Vivek},
       title={Multiplicity of the saturated special fiber ring of height three
  gorenstein ideals},
        date={2021},
     journal={Acta Math. Vietnam.},
       pages={1\ndash 12},
}

\bib{DEGREE_SPECIALIZATION}{article}{
      author={{Cid-Ruiz}, Yairon},
      author={{Simis}, Aron},
       title={Degree of rational maps via specialization},
        date={2019-01},
     journal={to appear in International Mathematics Research Notices},
        note={arXiv:1901.06599},
}

\bib{conca2021radical}{article}{
      author={Conca, A.},
      author={Negri, E.~De},
      author={Gorla, E.},
       title={Radical generic initial ideals},
        date={2021},
     journal={arXiv preprint arXiv:2108.10115},
}

\bib{conca2018cartwright}{article}{
      author={Conca, Aldo},
      author={De~Negri, Emanuela},
      author={Gorla, Elisa},
       title={Cartwright--{S}turmfels ideals associated to graphs and linear
  spaces},
        date={2018},
     journal={Journal of Combinatorial Algebra},
      volume={2},
      number={3},
       pages={231\ndash 257},
}

\bib{conca2019resolution}{article}{
      author={Conca, Aldo},
      author={Tsakiris, Manolis~C.},
       title={Resolution of ideals associated to subspace arrangements},
        date={2019},
     journal={arXiv preprint arXiv:1910.01955},
}

\bib{CARLOS_MONOID}{article}{
      author={Cortadellas~Ben\'itez, Teresa},
      author={D'Andrea, Carlos},
       title={Minimal generators of the defining ideal of the {R}ees algebra
  associated to monoid parameterizations},
        date={2010},
        ISSN={0167-8396},
     journal={Comput. Aided Geom. Design},
      volume={27},
      number={6},
       pages={461\ndash 473},
}

\bib{CARLOS_MU2}{article}{
      author={Cortadellas~Ben\'itez, Teresa},
      author={D'Andrea, Carlos},
       title={Minimal generators of the defining ideal of the {R}ees algebra
  associated with a rational plane parametrization with {$\mu=2$}},
        date={2014},
        ISSN={0008-414X},
     journal={Canad. J. Math.},
      volume={66},
      number={6},
       pages={1225\ndash 1249},
}

\bib{CARLOS_MONO}{article}{
      author={Cortadellas~Ben\'itez, Teresa},
      author={D'Andrea, Carlos},
       title={The {R}ees algebra of a monomial plane parametrization},
        date={2015},
        ISSN={0747-7171},
     journal={J. Symbolic Comput.},
      volume={70},
       pages={71\ndash 105},
}

\bib{COX_EQ_PARAM}{incollection}{
      author={Cox, David~A.},
       title={Equations of parametric curves and surfaces via syzygies},
        date={2001},
   booktitle={Symbolic computation: solving equations in algebra, geometry, and
  engineering ({S}outh {H}adley, {MA}, 2000)},
      series={Contemp. Math.},
      volume={286},
   publisher={Amer. Math. Soc., Providence, RI},
       pages={1\ndash 20},
}

\bib{cox1998moving_line}{article}{
      author={Cox, David~A.},
      author={Sederberg, Thomas~W.},
      author={Chen, Falai},
       title={The moving line ideal basis of planar rational curves},
        date={1998sep},
        ISSN={0167-8396},
     journal={Comput. Aided Geom. Des.},
      volume={15},
      number={8},
       pages={803–827},
         url={https://doi.org/10.1016/S0167-8396(98)00014-4},
}

\bib{AB_INITIO}{article}{
      author={Doria, A.~V.},
      author={Hassanzadeh, S.~H.},
      author={Simis, A.},
       title={A characteristic-free criterion of birationality},
        date={2012},
     journal={Adv. Math.},
      volume={230},
      number={1},
       pages={390\ndash 413},
}

\bib{EISEN_COMM}{book}{
      author={Eisenbud, David},
       title={Commutative algebra with a view towards algebraic geometry},
      series={Graduate Texts in Mathematics, 150},
   publisher={Springer-Verlag},
        date={1995},
}

\bib{EISENBUD_ULRICH_ROW_IDEALS}{article}{
      author={Eisenbud, David},
      author={Ulrich, Bernd},
       title={Row ideals and fibers of morphisms},
        date={2008},
     journal={Michigan Math. J.},
      volume={57},
       pages={261\ndash 268},
        note={Special volume in honor of Melvin Hochster},
}

\bib{EWALD}{book}{
      author={Ewald, G\"{u}nter},
       title={Combinatorial convexity and algebraic geometry},
      series={Graduate Texts in Mathematics},
   publisher={Springer-Verlag, New York},
        date={1996},
      volume={168},
}

\bib{FULTON_INTERSECTION_THEORY}{book}{
      author={Fulton, William},
       title={Intersection theory},
     edition={Second},
      series={Ergebnisse der Mathematik und ihrer Grenzgebiete. 3. Folge. A
  Series of Modern Surveys in Mathematics [Results in Mathematics and Related
  Areas. 3rd Series. A Series of Modern Surveys in Mathematics]},
   publisher={Springer-Verlag, Berlin},
        date={1998},
      volume={2},
}

\bib{Hassanzadeh_Simis_Cremona_Sat}{article}{
      author={Hassanzadeh, Seyed~Hamid},
      author={Simis, Aron},
       title={Plane {C}remona maps: saturation and regularity of the base
  ideal},
        date={2012},
     journal={J. Algebra},
      volume={371},
       pages={620\ndash 652},
}

\bib{HASSANZADEH_SIMIS_DEGREES}{article}{
      author={Hassanzadeh, Seyed~Hamid},
      author={Simis, Aron},
       title={Bounds on degrees of birational maps with arithmetically
  {C}ohen-{M}acaulay graphs},
        date={2017},
     journal={J. Algebra},
      volume={478},
       pages={220\ndash 236},
}

\bib{HERMANN_MULTIGRAD}{article}{
      author={Herrmann, Manfred},
      author={Hyry, Eero},
      author={Ribbe, J\"{u}rgen},
      author={Tang, Zhongming},
       title={Reduction numbers and multiplicities of multigraded structures},
        date={1997},
     journal={J. Algebra},
      volume={197},
      number={2},
       pages={311\ndash 341},
}

\bib{HULEK_KATZ_SCHREYER_SYZ}{article}{
      author={{Hulek}, Klaus},
      author={{Katz}, Sheldon},
      author={{Schreyer}, Frank-Olaf},
       title={{Cremona transformations and syzygies.}},
        date={1992},
        ISSN={0025-5874; 1432-1823/e},
     journal={{Math. Z.}},
      volume={209},
      number={3},
       pages={419\ndash 443},
}

\bib{huneke2006integral}{book}{
      author={Huneke, Craig},
      author={Swanson, Irena},
       title={Integral closure of ideals, rings, and modules},
   publisher={Cambridge University Press},
        date={2006},
      volume={13},
}

\bib{HYRY_MULTIGRAD}{article}{
      author={Hyry, Eero},
       title={The diagonal subring and the {C}ohen-{M}acaulay property of a
  multigraded ring},
        date={1999},
     journal={Trans. Amer. Math. Soc.},
      volume={351},
      number={6},
       pages={2213\ndash 2232},
}

\bib{GOLDMAN_JIA}{article}{
      author={Jia, Xiaohong},
      author={Goldman, Ron},
       title={{$\mu$}-bases and singularities of rational planar curves},
        date={2009},
        ISSN={0167-8396},
     journal={Comput. Aided Geom. Design},
      volume={26},
      number={9},
       pages={970\ndash 988},
}

\bib{survey_mu_basis}{article}{
      author={Jia, Xiaohong},
      author={Shi, Xiaoran},
      author={Chen, Falai},
       title={Survey on the theory and applications of $\mu$-bases for rational
  curves and surfaces},
        date={2018},
     journal={Journal of Computational and Applied Mathematics},
      volume={329},
       pages={2\ndash 23},
}

\bib{Kleiman_geom_mult}{article}{
      author={Kleiman, Steven},
      author={Thorup, Anders},
       title={A geometric theory of the {B}uchsbaum-{R}im multiplicity},
        date={1994},
     journal={J. Algebra},
      volume={167},
      number={1},
       pages={168\ndash 231},
}

\bib{KPU_NORMAL_SCROLL}{article}{
      author={Kustin, Andrew},
      author={Polini, Claudia},
      author={Ulrich, Bernd},
       title={Rational normal scrolls and the defining equations of {R}ees
  algebras},
        date={2011},
     journal={J. Reine Angew. Math.},
      volume={650},
       pages={23\ndash 65},
}

\bib{KPU_blowup_fibers}{article}{
      author={Kustin, Andrew},
      author={Polini, Claudia},
      author={Ulrich, Bernd},
       title={Blowups and fibers of morphisms},
        date={2016},
     journal={Nagoya Math. J.},
      volume={224},
      number={1},
       pages={168\ndash 201},
}

\bib{KPU_BIGRAD_STRUCT}{article}{
      author={Kustin, Andrew},
      author={Polini, Claudia},
      author={Ulrich, Bernd},
       title={The bi-graded structure of symmetric algebras with applications
  to {R}ees rings},
        date={2017},
     journal={J. Algebra},
      volume={469},
       pages={188\ndash 250},
}

\bib{KPU_GOR3}{article}{
      author={Kustin, Andrew},
      author={Polini, Claudia},
      author={Ulrich, Bernd},
       title={The equations defining blowup algebras of height three
  {G}orenstein ideals},
        date={2017},
     journal={Algebra Number Theory},
      volume={11},
      number={7},
       pages={1489\ndash 1525},
}

\bib{li2013images}{article}{
      author={Li, Binglin},
       title={Images of rational maps of projective spaces},
        date={2013},
     journal={arXiv preprint arXiv:1310.8453},
}

\bib{NONLINEAR_BOOK}{book}{
      author={Micha{\l}ek, Mateusz},
      author={Sturmfels, Bernd},
       title={Invitation to nonlinear algebra},
   publisher={American Mathematical Soc.},
        date={2021},
      volume={211},
}

\bib{SIMIS_PAN_JONQUIERES}{article}{
      author={Pan, Ivan},
      author={Simis, Aron},
       title={Cremona maps of de {J}onqui\`eres type},
        date={2015},
     journal={Canad. J. Math.},
      volume={67},
      number={4},
       pages={923\ndash 941},
}

\bib{SIMIS_RUSSO_BIRAT}{article}{
      author={Russo, Francesco},
      author={Simis, Aron},
       title={On birational maps and {J}acobian matrices},
        date={2001},
     journal={Compositio Math.},
      volume={126},
      number={3},
       pages={335\ndash 358},
}

\bib{SEDERBERG20161}{article}{
      author={Sederberg, Thomas~W.},
      author={Goldman, Ronald~N.},
      author={Wang, Xuhui},
       title={Birational 2d free-form deformation of degree $1\times n$},
        date={2016},
     journal={Computer Aided Geometric Design},
      volume={44},
       pages={1\ndash 9},
}

\bib{SEDERBERG20151}{article}{
      author={Sederberg, Thomas~W.},
      author={Zheng, Jianmin},
       title={Birational quadrilateral maps},
        date={2015},
     journal={Computer Aided Geometric Design},
      volume={32},
       pages={1\ndash 4},
}

\bib{Simis_cremona}{article}{
      author={Simis, Aron},
       title={Cremona transformations and some related algebras},
        date={2004},
     journal={J. Algebra},
      volume={280},
      number={1},
       pages={162\ndash 179},
}

\bib{Sim_Ulr_Vasc_mult}{article}{
      author={Simis, Aron},
      author={Ulrich, Bernd},
      author={Vasconcelos, Wolmer~V.},
       title={Codimension, multiplicity and integral extensions},
        date={2001},
     journal={Math. Proc. Cambridge Philos. Soc.},
      volume={130},
      number={2},
       pages={237\ndash 257},
}

\bib{TRUNG_VERMA_SURVEY}{article}{
      author={Trung, N.~V.},
      author={Verma, J.~K.},
       title={Hilbert functions of multigraded algebras, mixed multiplicities
  of ideals and their applications},
        date={2010},
     journal={J. Commut. Algebra},
      volume={2},
      number={4},
       pages={515\ndash 565},
}

\bib{TRUNG_VERMA_MIXED_VOL}{article}{
      author={Trung, Ngo~Viet},
      author={Verma, Jugal},
       title={Mixed multiplicities of ideals versus mixed volumes of
  polytopes},
        date={2007},
     journal={Trans. Amer. Math. Soc.},
      volume={359},
      number={10},
       pages={4711\ndash 4727},
}

\bib{VAN_DER_WAERDEN}{inproceedings}{
      author={Van~der Waerden, Bartel~Leendert},
       title={On {H}ilbert’s function, series of composition of ideals and a
  generalization of the theorem of {B}ezout},
        date={1929},
   booktitle={Proc. {R}oy. {A}cad. {A}msterdam},
      volume={31},
       pages={749\ndash 770},
}

\end{biblist}
\end{bibdiv}

\end{document}